\theoremstyle{plain}
\newtheorem{thm}{Theorem}[section]
\newtheorem*{thm*}{Theorem}
\newtheorem{lem}[thm]{Lemma}
\newtheorem{prop}[thm]{Proposition}
\newtheorem{cor}[thm]{Corollary}
\newtheorem{quest}[thm]{Question}
\newtheorem*{quest*}{Question}
\theoremstyle{definition}
\newtheorem{defn}[thm]{Definition}
\newtheorem{eg}[thm]{Example}
\newtheorem{noneg}[thm]{Non-example}
\newtheorem{fact}[thm]{Fact}
\theoremstyle{remark}
\newtheorem{rem}[thm]{Remark}
\newenvironment{claim}[1]{\par\noindent\underline{Claim}\space#1}{}
\newenvironment{claimproof}[1]{\par\noindent\underline{Proof of the Claim:}\space#1}{\hfill $\blacksquare$}
\providecommand{\:}{}
\renewcommand{\:}[0]{\ : \ }
\newcommand{\I}[0]{\mathcal{I}}
\newcommand{\U}[0]{\mathcal{U}}
\newcommand{\M}{\mathcal{M}}
\providecommand{\L}{}
\renewcommand{\L}{\mathcal{L}}
\newcommand{\fhi}[0]{\varphi}
\newcommand{\A}{\mathcal{A}}
\renewcommand{\H}{\mathcal{H}}
\newcommand{\C}{\mathcal{C}}
\renewcommand{\I}{\mathcal{I}}
\newcommand{\B}[0]{\mathcal{B}}
\renewcommand{\P}{\mathcal{P}}
\newcommand{\K}{\mathcal{K}}
\renewcommand{\and}{\ \& \ }
\newcommand{\Iff}[0]{\text{ iff }}
\newcommand{\tp}{\mathrm{tp}}
\newcommand{\age}{\mathrm{age}}
\newcommand{\OP}{\mathrm{OP}}
\newcommand{\IP}{\mathrm{IP}}
\newcommand{\NIP}{\mathrm{NIP}}
\newcommand{\SOP}{\mathrm{SOP}}
\newcommand{\NSOP}{\mathrm{NSOP}}
\newcommand{\TP}{\mathrm{TP}}
\newcommand{\NTP}{\mathrm{NTP}}
\newcommand{\SM}{\mathrm{SM}}
\newcommand{\NSM}{\mathrm{NSM}}
\newcommand{\PM}{\mathrm{PM}}
\newcommand{\NPM}{\mathrm{NPM}}
\newcommand{\CM}{\mathrm{CM}}
\newcommand{\UPM}{\mathrm{UPM}}
\newcommand{\NUPM}{\mathrm{NUPM}}
\title[Notions of maximality in first-order theories]{A walk on the wild side:\\ Notions of maximality in first-order theories}
\author{Michele Bailetti}
\date{\today}
\begin{document}
\begin{abstract}
    In the classification of complete first-order theories, many dividing lines have been defined in order to understand the complexity and the behavior of some classes of theories. In this paper, using the concept of patterns of consistency and inconsistency, we describe a general framework to study dividing lines and we introduce a notion of maximal complexity by requesting the presence of all the exhibitable patterns of definable sets. Weakening this notion, we define new properties (Positive Maximality and the $\PM^{(k)}$ hierarchy) and prove some results about them. In particular, we show that $\PM^{(k+1)}$ theories are not $k$-dependent. Moreover, we provide an example of a $\PM$ but $\NSOP_4$ theory (showing that $\SOP$ and the $\SOP_n$ hierarchy, for $n \geq 4$, can not be described by \emph{positive} patterns) and, for each $1<k<\omega$, an example of a $\PM^{(k)}$ but $\NPM^{(k+1)}$ theory (showing that the newly defined hierarchy does not collapse).     
\end{abstract}
\maketitle

\setcounter{tocdepth}{1}
\tableofcontents

\section{Introduction}
Classification theory is a line of research in model theory, started essentially by Shelah, in which we define and study dividing lines that differentiate classes of theories in order to understand their complexity and behavior. One of the first dividing lines, introduced by Shelah, is \emph{stability} (See Definition \ref{classical-div-lines}), defined as a generalization of Morley's notion of \emph{totally transcendental} theories. Stable theories can be characterized by the absence of a local combinatorial property on definable sets (\emph{Order Property}, See Definition \ref{classical-div-lines}, (1)). As shown by Shelah, the presence of this local property (the theory is said to be \emph{unstable} in this case) implies bad structural behaviors of the theory. For instance 
\begin{thm*}[Shelah,\cite{Shelah-numberofnonisomodels}]
    If $T$ is unstable and $\lambda > |T| + \aleph_0$, then $T$ has exactly $2^\lambda$ non-isomorphic models of size $\lambda$. 
\end{thm*}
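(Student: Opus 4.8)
The statement has two halves, an upper bound and a lower bound, and essentially all the content is in the latter. For the upper bound I would just count: a structure of size $\lambda$ in the language $\mathcal{L}$ of $T$ is determined by choosing interpretations of the $\le |T|$ symbols on a fixed underlying set of size $\lambda$, and each symbol admits at most $2^{\lambda}$ interpretations (an $n$-ary relation gives $2^{\lambda^n}=2^\lambda$, an $n$-ary function gives $\lambda^{\lambda}=2^\lambda$). Hence there are at most $(2^\lambda)^{|T|}=2^{\lambda\cdot|T|}=2^\lambda$ such structures, using $\lambda>|T|+\aleph_0$, and a fortiori at most $2^\lambda$ isomorphism types. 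So the task reduces to exhibiting $2^\lambda$ pairwise non-isomorphic models of size $\lambda$.

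For the lower bound the plan is to exploit instability through the order property. First I would extract a formula $\varphi(\bar x,\bar y)$ witnessing $\OP$, i.e.\ with $(\bar a_i)_{i<\omega},(\bar b_j)_{j<\omega}$ such that $\models\varphi(\bar a_i,\bar b_j)$ iff $i<j$. Using Ramsey's theorem together with compactness I would stretch this to an indiscernible sequence realizing the same pattern indexed by any prescribed linear order $I$, so that $\varphi$ linearly orders the skeleton. Passing to a Skolemization $T^{\mathrm{sk}}$ of the same cardinality and fixing the Ehrenfeucht--Mostowski type of this sequence yields a functor $I\mapsto M(I)$ from linear orders to models of $T$, where $M(I)$ is the Skolem hull of a $\varphi$-ordered skeleton of order type $I$. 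For $|I|=\lambda>|T|$ the hull has size exactly $\lambda$, and its reduct to $\mathcal{L}$ is a model of $T$ of the required size; it then remains to choose $2^\lambda$ orders of size $\lambda$ whose hulls are pairwise non-isomorphic.

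The entire difficulty is concentrated at this last step, and the main obstacle is that an isomorphism $M(I)\cong M(J)$ need not carry the skeleton of $I$ to the skeleton of $J$. The order $\varphi$ defines on the skeleton is visible, but it is not \emph{a priori} an isomorphism invariant of the full Skolem hull, so one cannot simply read $I$ off from $M(I)$ and invoke the existence of $2^\lambda$ non-isomorphic orders. The resolution is to isolate genuine isomorphism invariants of $M(I)$ that are computable from $I$ and still separate $2^\lambda$ many orders: following Shelah, these are built from the cofinalities of the Dedekind cuts of $I$ realized inside the hull. For regular $\lambda$ such data can be coded by a stationary set $S\subseteq\lambda$, and since there are $2^\lambda$ stationary sets pairwise inequivalent modulo the nonstationary ideal (by the standard splitting theorem), one obtains orders $I_S$ with provably non-isomorphic hulls; the singular case is then handled by a separate cofinality argument that reduces to the regular one. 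The step I expect to be genuinely delicate is the main lemma asserting that this cut-cofinality invariant is preserved by \emph{arbitrary} isomorphisms of the hulls rather than merely by skeleton-preserving maps, since it requires controlling how Skolem terms can relocate elements across cuts; by contrast, the cardinal arithmetic and the construction of the separating family of orders are comparatively routine once the set-theoretic inputs are in place.
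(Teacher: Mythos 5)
The paper itself gives no proof of this statement: it is quoted purely as background, with a citation to Shelah, so there is no in-paper argument to measure you against. What you have written is a faithful outline of the classical route from the literature (Shelah's Classification Theory, Chapter VIII): the counting upper bound, the extraction of an order-property formula, stretching by Ramsey/compactness to indiscernible sequences indexed by arbitrary linear orders, Skolemization and Ehrenfeucht--Mostowski hulls $I \mapsto M(I)$, and finally separation of isomorphism types by invariants built from cofinalities of cuts, with Solovay splitting of stationary sets supplying $2^\lambda$ pairwise inequivalent invariants for regular $\lambda$. Your upper-bound computation is correct, and you correctly locate the crux: an isomorphism $M(I) \cong M(J)$ has no reason to respect the skeletons, so one must manufacture invariants of the hull itself.

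The genuine gap is that the crux is only named, not crossed. The lemma you defer --- that the cut-cofinality data can be read off from $M(I)$ in a way invariant under \emph{arbitrary} isomorphisms, despite Skolem terms moving elements across cuts --- is not a delicate finishing touch; it is the theorem. Making it work requires, among other things, choosing the invariant so that realizations of a cut by arbitrary elements of the hull (not just skeleton elements) are controlled, which in Shelah's treatment forces a careful choice of the linear orders $I_S$ (e.g.\ orders built from $\lambda$ with stationarily many points of prescribed cofinality) and a nontrivial analysis of how terms in finitely many skeleton variables can realize cuts; the singular-$\lambda$ case is a separate construction, not a reduction to the regular case by a one-line cofinality remark. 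As it stands, your proposal is a correct map of the proof with the central territory marked ``here be the argument''; to count as a proof it would need the invariance lemma stated precisely and proven, and the family of orders $I_S$ exhibited together with the verification that distinct stationary sets (modulo the nonstationary ideal) yield non-isomorphic hulls.
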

On the other hand, the assumption of stability provides methods and tools to analyze and understand this class of theories.
The rich framework provided by stability led researchers to try to extend these methodologies outside stability, defining more and more dividing lines and giving structure to the universe of first-order theories. 

Two prototypical examples of unstable theories, dense linear orders without endpoints and the theory of the random graph, led to the definition of two incomparable extensions of the class of stable theories whose intersection characterizes it: $\NIP$ and $\NSOP$ (See Definition \ref{classical-div-lines} and Fact \ref{implications-classical-div-lines}).
In order to study the existence of saturated elementary extensions of models of first-order theories, Shelah (in \cite{SHELAH-simpleunstable}) isolated a class of theories, extending stability, called \emph{simple theories} (omitting the \emph{tree property}, see Definition \ref{classical-div-lines}, (4)). Simple theories were then intensely studied by Hrushovski, Chatzidakis, Pillay, B. Kim et al (See, e.g., \cite{HrushovskiPillay1994GroupsDI,kim-forkingSimpletheories,Kimpillay-fromStabtosimp,KIMPILLAY-SIMPLETHEORIES,zoe_ehud-fieldwithaut}), showing that, with the assumption of omitting the tree property, many typically stability-theoretic tools behave correctly and are useful in this enlarged framework. A plethora of dividing lines has been defined, along with great developments regarding the classes of theories that are characterized by them, making classification theory a rich and exciting line of research in model theory.

Many of these dividing lines are defined, similarly to stability, by the absence of specific configurations of definable sets. In this paper, abstracting the essential and combinatorial nature of these properties, we introduce the concept of \emph{patterns of consistency and inconsistency} (See Definition \ref{def-patterns}) in order to give a general framework for talking about dividing lines. This concept was first defined by Shelah in \cite{Shelah1999ONWI} (Definition 5.17) with a different notation and more recently appeared in \cite{kruckman2023newkim} (Definition 6.2) in a fashion similar to ours.

Patterns essentially describe abstract configurations of finite collections of sets by asserting conditions about the emptiness (or not) of their intersections. A formula exhibits a pattern in a theory if there are parameters in a model of the theory such that the sets defined by the formula and the parameters satisfy the conditions described by the pattern (See Definition \ref{defn-exhibition-of-patterns}). In this sense, we can capture the combinatorial properties that define dividing lines by exhibition of patterns (a similar approach to ours can be found in \cite{Hill-Guingona_Pos-loc-prop}). 
In Section \ref{Sec-patterns}, we define all the notions relative to patterns and describe this general framework for dividing lines. 

On the wild side of every previously considered dividing line, two notions of maximal complexity for a complete first-order theory have been defined: Cooper's property $1$ (See \cite{cooper}) and ``an extreme condition of the form of unstability" called \emph{Straight Maximality} ($\SM$) defined by Shelah (\cite{Shel-modeltheory}). In Section \ref{Sec-Maximality}, we show that these two notions are equivalent and fit in the framework of exhibition of patterns. Indeed, these two properties are characterized by the existence of a formula exhibiting all the \emph{exhibitable} (see Definition \ref{defn-exhibition-of-patterns}) patterns. We kept a name similar to Shelah's for this property (\emph{Straight up Maximality}).

One of the main approaches in classification theory, as we described above, is to extend nice properties and tools on the tame side of some dividing line to a wider class of theories. Here we reverse the approach and start from the worst (in some sense) combinatorial property that a complete first-order theory can have and study how theories behave when we weaken this bad property. Using this methodology, in Section \ref{secWeak} we define weakenings of $\SM$ by requesting the presence of a formula exhibiting a smaller collection of patterns, guided by the examples of previously defined properties of formulas. For instance, both $\OP$ and $\IP$ do not require any \emph{inconsistency conditions}. We thus define Consistency Maximality ($\CM$, see Definition \ref{CM-defn}) by requiring the existence of a formula realizing all the patterns without inconsistency conditions. We show that $\CM$ is equivalent to $\IP$ (even at the level of formulas), giving a different perspective to this well-known property. 
Other dividing lines, such as $\TP$, $\TP_1$ and $\TP_2$ (See Definition \ref{classical-div-lines}, (4),(5),(6)), can be characterized by realizing patterns in which the negation of the formula is not involved. Using this observation, we define a collection of patterns (\emph{positive patterns}, see Definition \ref{positive-patt-defn}) and a weaker version of $\SM$, called \emph{Positive Maximality} ($\PM$, see Definition \ref{PM-defn}) by requesting the existence of a formula exhibiting all the positive patterns. In particular, all the properties that can be characterized by realization of \emph{positive} patterns are implied by $\PM$. Since our characterization of $\SOP$ uses patterns that are not positive, a natural question is the following: 
\begin{quest*}
Is there a positive maximal $\NSOP$ theory?
\end{quest*}
In Section \ref{secWeak}, we answer positively to the above question by finding an example of a $\NSOP_4$ (See Definition \ref{classical-div-lines},($7_4$)) positive maximal theory (See Example \ref{pm-nsop-example}). In particular, this shows that neither $\SOP$ nor $\SOP_n$ for $n \geq 4$ (See Definition \ref{classical-div-lines},($7_n$)) can be characterized through positive patterns. We point out that, in \cite{kaplan2023generic} (Lemma 7.3), Kaplan, Ramsey and Simon proved a characterization of $\SOP_3$ which can be obtained by exhibition of positive patterns.  

In Section \ref{Sec-PMk}, by putting restraints on the size of the inconsistency conditions in positive patterns, we define a collection of properties $(\PM^{(k)})_{1<k<\omega}$ (See Definition \ref{PM_k-defn}), weaker than $\PM$ but still strong enough to imply all the classical positive dividing lines. We show that, at the level of theories, $\PM^{(k+1)}$ implies $\PM^{(k)}$ (See Proposition \ref{PM_k+1-implies_PM_k}) and we give $\NSOP_4$, $\PM^{(k)}$ and $\NPM^{(k+1)}$ examples showing that the above implication is strict (See Example \ref{PM_k,NPM_k+1-examples}). We show that $\PM^{(k)}$ is equivalent to the realization (in some sense) of every finite $k$-hypergraph and can be witnessed by a (generic ordered $k$-hypergraph)-indexed generalized indiscernible. The $k$-independence property ($\IP_k$) is a higher-arity version of $\IP$ introduced by Shelah. In \cite{n-dependence}, Chernikov, Palacin and Takeuchi showed the equivalence of $\NIP_k$ with the collapse of the above kind of generalized indiscernibles. This led the way for showing the following result.
\begin{thm*}[See \ref{PMk+1impliesIPk}]
    For every $0<k<\omega$, if $T$ is $\PM^{(k+1)}$, then $T$ has $\IP_k$. 
\end{thm*}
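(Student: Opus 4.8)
The plan is to route the argument entirely through the theory of generalized indiscernibles, exactly as the surrounding discussion suggests. First I would invoke the characterization of $\PM^{(k+1)}$ established earlier: if $T$ is $\PM^{(k+1)}$, then there is a formula $\varphi(x;\bar y)$ and a sequence $(a_i)_i$ of tuples, indexed by (an elementary copy of) the generic ordered $(k+1)$-hypergraph $\mathcal{H}_{k+1}$, which is $\mathcal{H}_{k+1}$-indiscernible and realizes every finite $(k+1)$-hypergraph. The content of this last clause is that the $(k+1)$-ary edge relation of the index structure is genuinely read off by $\varphi$: for a $(k+1)$-element subset $e=\{i_0<\dots<i_k\}$ of the index set, the $\varphi$-type of $(a_{i_0},\dots,a_{i_k})$ over the rest of the sequence depends on whether or not $e$ is a hyperedge of $\mathcal{H}_{k+1}$.

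Next I would quote the theorem of Chernikov, Palac\'in and Takeuchi \cite{n-dependence}: $T$ is $\NIP_k$ if and only if every generalized indiscernible indexed by the generic ordered $(k+1)$-hypergraph collapses to an order-indiscernible, i.e.\ the type of any increasing tuple depends only on the order type of its indices in the underlying linear order and not on the hypergraph structure. The arities match here: the case $\NIP_1=\NIP$ is Scow's ordered-graph ($2$-hypergraph) characterization, and in general $\NIP_k$ is governed by $(k+1)$-hypergraphs, which is precisely the index structure attached to $\PM^{(k+1)}$.

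The two facts are then in direct tension, and this is where the proof closes. The $\PM^{(k+1)}$-witness $(a_i)_i$ is an ordered $(k+1)$-hypergraph-indexed indiscernible that does \emph{not} collapse: in $\mathcal{H}_{k+1}$ one can choose two increasing $(k+1)$-tuples $\bar i$ and $\bar i'$ with the same order type, one spanning a hyperedge and the other not, and by the previous paragraph $a_{\bar i}$ and $a_{\bar i'}$ then have different $\varphi$-types, hence different types. Thus $(a_i)_i$ is not order-indiscernible, so by the contrapositive of the Chernikov--Palac\'in--Takeuchi characterization $T$ is not $\NIP_k$; that is, $T$ has $\IP_k$, as desired.

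The step I expect to carry the real weight is the first one: making precise the translation ``realizes every finite $(k+1)$-hypergraph $\Rightarrow$ the symmetric $(k+1)$-ary edge relation is detected by a single formula on the indiscernible, uniformly in hyperedge versus non-hyperedge.'' I must check that the positive patterns witnessing $\PM^{(k+1)}$ — whose inconsistency conditions have size controlled by $k+1$ — encode exactly this edge relation, and that the indiscernible extracted (via the modeling property / generalized Ramsey for $\mathcal{H}_{k+1}$) is indexed by precisely the structure occurring in \cite{n-dependence}, with the linear order playing the same role on both sides. Once the index structures and arities are reconciled, the collapse dichotomy supplies the remaining content with no further computation.
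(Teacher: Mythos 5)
Your proposal is correct and follows essentially the same route as the paper's own proof: extract a (generic ordered $(k+1)$-hypergraph)-indexed indiscernible witnessing $\PM^{(k+1)}$ via the modeling property, then take two increasing $(k+1)$-tuples of the same order type, one a hyperedge and one not, so that consistency versus inconsistency of $\{\fhi(x;b_{h_i})\}$ separates their types and defeats the collapse criterion of Chernikov--Palac\'in--Takeuchi (Theorem 5.4). The translation step you flag as carrying the real weight is exactly the paper's Proposition \ref{PMk-real-hypergraph} together with the Ramsey property of ordered hypergraphs, so no gap remains.
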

The notions of maximality introduced in this paper as essentially given by \emph{binary} properties. The above result shows, in particular, that the binary notion of positive maximality implies all the $k$-ary versions of $\IP$ (at each step of the way). We also point out that there are examples of simple $\IP_k$ theories, and thus $\PM^{(k+1)}$ is not equivalent to $\IP_k$.  

Many questions arise about these notions of maximality, the behavior of classes of theories defined by them and their relations with previously defined dividing lines. For instance, which of the standard examples of $\SOP_3$ but $\NSOP$ theories are $\PM$ (or $\PM^{(k)}$ for some $1<k<\omega$)? Is there a $\SOP_3$ but $\NSOP$ and $\NPM^{(2)}$ theory? If not, we would have that $\NSOP \cap \NPM^{(2)} = \NSOP_3$; this would imply the collapse of the $\SOP_n$ hierarchy inside $\NTP_2$.

\section{Preliminaries}\label{Sec-Prem}
Unless otherwise stated, in what follows $T$ is a complete first-order $\L$-theory, $\U \models T$ is a monster model. Whenever we state the existence of some tuples, we mean tuples of elements from $\U$ and every set is a \emph{small} (of cardinality less then the saturation of $\U$) subset of $\U$. Frequently we work with \emph{partitioned formulas} $\fhi(x;y)$ in which the free variables are divided into \emph{object variables} and \emph{parameters variables} (the variables $x$ and $y$ respectively). Moreover, whenever we write $x,y,z,...$ for variables we mean tuples of variables (not necessarily singletons) and, as a notation, we write $\U^x,\U^y,\U^z,...$ for the set of tuples of elements of $\U$ that can be plugged in for $x,y,z,...$ respectively. By convention, particularly when dealing with the class of finite models of a theory in a relational language, we allow the empty structure. This simplifies the proof that a class of finite structures is a Fra\"issé class (see Examples \ref{pm-nsop-example} and \ref{PM_k,NPM_k+1-examples}). We note that the results are easily seen to still be true even when we exclude the empty structure.  

\subsection{Dividing lines}
We recall the definition of some of the properties defining dividing lines in the universe of first-order theories. Most of these properties were introduced by Shelah in \cite{shelah1990classification} and \cite{SHELAH-toward-class-unstable-t}. For the definitions and a bird's-eye view of the universe we refer to \cite{universe}.
\begin{defn}\label{classical-div-lines}
    A partitioned formula $\fhi(x;y)$ has 
    \begin{enumerate}
        \item the \emph{order property} ($\OP$) if there are tuples $(a_ib_i)_{i<\omega}$ such that \[\models \fhi(a_i;b_j) \iff i<j\]
        \item the \emph{independence property} ($\IP$) if there are tuples $(a_i)_{i<\omega}$ and $(b_I)_{I \subseteq \omega}$ such that \[\models \fhi(a_i;b_I) \iff i \in I\]
        \item the \emph{strict-order property} ($\SOP$) if there are tuples $(b_i)_{i<\omega}$ such that 
        \[\models \exists x (\fhi(x;b_j) \land \neg\fhi(x;b_i)) \iff i < j\]
        \item the $k$-\emph{tree property} ($k$-$\TP$) if there exist tuples $(b_\eta)_{\eta \in \omega^{<\omega}}$ such that 
        \begin{itemize}
            \item paths are consistent: for all $f \in \omega^\omega$, we have that $\{\fhi(x;b_{f|i}) \: i <\omega\}$ is consistent, and 
            \item levels are $k$-inconsistent: for all $\eta \in \omega^{<\omega}$, $\{\fhi(x;b_{\eta \frown i}) \: i<\omega\}$ is $k$-inconsistent (i.e. every subset of size $k$ is inconsistent).
        \end{itemize}
        It has the \emph{tree property} ($\TP$) if it has $k$-$\TP$ for some $1<k<\omega$
        \item the \emph{tree property of the first kind} ($\TP_1$) if there are tuples $(b_{\eta})_{\eta \in \omega^{<\omega}}$ such that 
        \begin{itemize}
            \item paths are consistent: for all $f \in \omega^\omega$, we have that $\{\fhi(x;b_{f|i}) \: i<\omega\}$ is consistent, and 
            \item incomparable elements are inconsistent: for all incomparable $\eta,\mu \in \omega^{<\omega}$, we have that $\{\fhi(x;b_\eta),\fhi(x;b_\mu)\}$ is inconsistent.
        \end{itemize}
        \item the $k$-\emph{tree property of the second kind} ($k$-$\TP_2$) if there are tuples $(b_i^j)_{i,j<\omega}$ such that 
        \begin{itemize}
            \item paths are consistent: for every $f \in \omega^\omega$, we have that $\{\fhi(x;b_{i}^{f(i)}) \: i<\omega\}$ is consistent, and 
            \item rows are $k$-inconsistent: for each $i<\omega$, $\{\fhi(x;b_i^j) \: j<\omega\}$ is $k$-inconsistent.
        \end{itemize}
        It has the \emph{tree property of the second kind} ($\TP_2$) if it has $k$-$\TP_2$ for some $1<k<\omega$. 
     \item[($7_n$)] $n$-\emph{strong order property} $(\SOP_n)$ for some $n\geq 3$, if $|x| = |y|$ and there are $(a_i)_{i<\omega}$ such that for all $i<j$, we have $\fhi(a_i;a_j)$ and the set 
    \[\{\fhi(x_1;x_2),\fhi(x_2;x_3),...,\fhi(x_{n-1},x_n),\fhi(x_n,x_1)\}\]
    is inconsistent.
    \end{enumerate}
    A complete first-order theory $T$ is 
    \begin{itemize}
        \item \emph{stable} if no formula has $\OP$ in $T$. It is said to be \emph{unstable} otherwise. 
        \item \emph{simple} if no formula has $\TP$ in $T$.
        \item $Np$ if no formula has $p$ in $T$, where $p \in \{\IP,\SOP,\TP_1,\TP_2, \SOP_n\}$; otherwise, we say that $T$ has $p$. For instance, $T$ is $\NIP$ if no formula has $\IP$ in $T$; if there is a formula with $\IP$ in $T$, we say that $T$ has $\IP$. 
    \end{itemize}
\end{defn}
The reader can find a diagram of known relations and implications between these properties below. 
\begin{fact}\label{implications-classical-div-lines}
    The following relations hold for properties of complete first-order theories. 
    \begin{itemize}
        \item Stable = $\NIP \cap \NSOP$.
        \item Simple = $\NTP_1 \cap \NTP_2$.
    \end{itemize}
\vspace{12pt}
\begin{center}
    \begin{tikzpicture}[node distance={12mm}]
    \tikzset{edge/.style = {->}}
        \node(OP){ $\OP$};
        \node(IP)[above of =OP]{$\IP$};
        \node(TP)[right of=OP]{$\TP$};
        \node(TP2)[above of =TP]{$\TP_2$};
        \node(TP1)[right of = TP]{$\TP_1$};
        \node(SOPn)[right of=TP1,, xshift=3mm]{$\SOP_n$};
        \node(SOPn+1)[right of =SOPn, xshift=5mm]{$\SOP_{n+1} $};
        \node(dots2)[right of=SOPn+1, xshift=3mm]{$...$};
        \node(SOP)[right of=dots2]{$\SOP$};
        \draw[edge] (TP) to (OP);
        \draw[edge] (IP) to (OP);
        \draw[edge] (TP2) to (IP);
        \draw[edge] (TP2) to (TP);
        \draw[edge] (TP1) to (TP);
        \draw[edge] (SOPn) to (TP1);
        \draw[edge] (SOPn+1) to (SOPn);
        \draw[edge] (dots2) to (SOPn+1);
        \draw[edge] (SOP) to (dots2);
        
    \end{tikzpicture}
\end{center}
\end{fact}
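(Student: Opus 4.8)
Every assertion here is a classical fact of Shelah's classification theory, so my plan is to peel off the implications that follow from a single coding-plus-Ramsey argument and isolate the two genuinely hard decomposition theorems on which the two displayed equalities actually rest.

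First I would treat the easy arrows of the diagram. The implication $\TP_1 \Rightarrow \TP$ is almost definitional: the immediate successors $(b_{\eta \frown i})_{i<\omega}$ of a fixed node are pairwise incomparable, hence pairwise (i.e.\ $2$-)inconsistent, so every level is $2$-inconsistent while paths stay consistent, which is $2$-$\TP$. For $\TP_2 \Rightarrow \TP$ I would re-index the array as a tree, putting $c_\eta := b_{|\eta|}^{\eta(|\eta|-1)}$, so that branches become paths of the array (consistent) and the successors of a node run through a single row (which is $k$-inconsistent). For $\IP \Rightarrow \OP$, given $(a_i)_{i<\omega}$ and $(b_I)_{I\subseteq\omega}$ with $\models \fhi(a_i;b_I) \iff i\in I$, I would specialize the parameters to the up-sets $d_j := b_{\{i\,:\,i\ge j\}}$, so that $\models\neg\fhi(a_i;d_j)\iff i<j$, exhibiting $\OP$ for $\neg\fhi$. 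The implication $\TP \Rightarrow \OP$ I would obtain by contraposition from the standard fact that stable theories are simple: a stable formula has finite local rank $D(\,\cdot\,,\fhi,k)$, which the unbounded branching with inconsistent levels of a $\TP$-tree would force to be infinite. The remaining edges $\TP_2\Rightarrow\IP$, $\SOP_n\Rightarrow\TP_1$, $\SOP_{n+1}\Rightarrow\SOP_n$ and $\SOP\Rightarrow\SOP_n$ are the standard arrows of the map of the universe, and I would cite them (e.g.\ via \cite{universe} and the original sources), giving only the intuition in each case — for instance that $\SOP$ produces a definable infinite strict order with no finite cycles, whence the $n$-cycle inconsistency needed for $\SOP_n$.

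With the arrows in hand, the easy inclusions of the two equalities are immediate. Since stability is by definition the absence of $\OP$, the implications $\IP\Rightarrow\OP$ and $\SOP\Rightarrow\OP$ give $\mathrm{stable}\Rightarrow\NIP\cap\NSOP$; and since simplicity is the absence of $\TP$, the implications $\TP_1\Rightarrow\TP$ and $\TP_2\Rightarrow\TP$ give $\mathrm{simple}\Rightarrow\NTP_1\cap\NTP_2$. The two converse inclusions are the heart of the matter and the main obstacle. The first is the statement \emph{unstable $\Rightarrow \IP$ or $\SOP$}: here my plan is to take $\fhi(x;y)$ with $\OP$, extract by Ramsey and compactness an indiscernible sequence $(a_i b_i)_{i\in\Q}$ witnessing the order, and analyze the relation the order induces on the parameters $b_i$, running the dichotomy of whether the configuration is symmetric enough that a Boolean combination of $\fhi$ realizes each finite subset independently (giving $\IP$), or else behaves transitively, in which case $\exists x(\fhi(x;b_j)\wedge\neg\fhi(x;b_i))$ reads off a strict order (giving $\SOP$). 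The second converse is \emph{$\TP\Rightarrow\TP_1$ or $\TP_2$}: starting from $k$-$\TP$, build the witnessing tree $(b_\eta)_{\eta\in\omega^{<\omega}}$ and apply a tree-indiscernible modeling / higher-Ramsey argument to the consistency pattern among nodes, arranging that either incomparable nodes become uniformly inconsistent on a subtree (extracting $\TP_1$) or a boundedly-inconsistent array with consistent columns survives (extracting $\TP_2$).

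In both converses the hard part is the same in spirit, and is where I would lean on the literature rather than reprove: making the above dichotomy precise \emph{at the level of a single formula}, i.e.\ controlling the entire consistency/inconsistency pattern simultaneously so that exactly one of the two clean configurations survives the extraction. This is Shelah's instability analysis for the first equality and his (and, in a modern treatment, Kim--Kim--Lee's) decomposition of the tree property for the second, together with the Džamonja--Shelah analysis underlying $\SOP_n\Rightarrow\TP_1$ and the hierarchy $\SOP_{n+1}\Rightarrow\SOP_n$; I would attribute these theorems accordingly rather than carry out their proofs.
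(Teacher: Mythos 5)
Your proposal is correct, and in substance it lands where the paper does: the paper offers no argument at all for this Fact, simply citing \cite{casanovas-NIP}, \cite{conant-dividinglines}, \cite{TP1-kimkim} and \cite{shelah1990classification}, whereas you reprove the cheap arrows and cite only the genuinely hard ones. Your easy arguments all check out: siblings being incomparable gives $\TP_1 \Rightarrow 2\text{-}\TP$; the up-set parameters $d_j = b_{\{i \, : \, i \ge j\}}$ turn $\IP$ into $\OP$ for $\neg\fhi$ (hence for $\fhi$); and the array-to-tree re-indexing for $\TP_2 \Rightarrow \TP$ works, modulo the trivial bookkeeping that your formula $c_\eta = b_{|\eta|}^{\eta(|\eta|-1)}$ leaves the root unassigned and shifts rows by one (assign the root anything; paths still meet each row at most once, so they remain consistent, and the children of a node still sweep a single $k$-inconsistent row). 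You also correctly identify exactly which ingredients are deep and cannot be obtained by such coding tricks: Shelah's dichotomy \emph{unstable $\Rightarrow$ $\IP$ or $\SOP$} (behind the first equality), the decomposition \emph{$\TP \Rightarrow \TP_1$ or $\TP_2$} due to Shelah with the modern local proof by Kim--Kim (the paper's \cite{TP1-kimkim}), and the $\SOP_n$ facts from Shelah/D\v{z}amonja--Shelah; deferring these with attribution is exactly what the paper does wholesale. The net effect is that your write-up is strictly more informative than the paper's one-line proof: a reader sees which arrows are elementary exercises and which rest on theorems whose proofs occupy papers of their own, at the modest cost that your sketches of the two dichotomies (indiscernible extraction plus a symmetric/transitive split, and the tree-modeling extraction) are indicative rather than complete, which is appropriate given you then cite the sources.
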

\begin{proof}
    For the proofs of the above we refer to \cite{casanovas-NIP}, \cite{conant-dividinglines}, \cite{TP1-kimkim} and \cite{shelah1990classification}. 
\end{proof}

\subsection{Generalized indiscernibles}
We review the basic definitions and notions around generalized indiscernibles, introduced by  Shelah in \cite{shelah1990classification} and developed by Scow in \cite{Scowthesis,Scow2015_Ind-emtypes-RamClass}.

\begin{defn}
    Fix a language $\L'$, $\L'$-structures $I$ and $J$ and tuples $b_i = f(i)$, $c_j = g(j)$ of some fixed length $\ell$ for some $f\colon I \to \U^\ell$ and $g\colon J \to \U^\ell$. 
    \begin{enumerate}
        \item We say that $(b_i)_{i \in I}$ is an \emph{$I$-indexed generalized indiscernible} (or just \emph{$I$-indiscernible}) if for all $n<\omega$ and for all $\Bar{i} = i_0,...,i_{n-1}$ and $\Bar{j} = j_0,...,j_{n-1}$ from $I$, we have 
        \[\mathrm{qftp}_{\L'}(\Bar{i}) = \mathrm{qftp}_{\L'}(\Bar{j}) \Rightarrow \tp_{\L}(b_{\Bar{i}}) = \tp_{\L}(b_{\Bar{j}})\]

        \item We say that $(b_i)_{i \in I}$ is \emph{based on} $(c_j)_{j \in J}$ if for any finite set of formulas $\Sigma$, for any $n<\omega$ and any tuple $\Bar{i} = i_0...i_{n-1}$ from $I$  there is a tuple $\Bar{j} = j_0...j_{n-1}$ from $J$ with $\mathrm{qftp}_{\L'}(\Bar{i}) = \mathrm{qftp}_{\L'}(\Bar{j})$, such that 
        \[\tp_\Sigma(b_{\Bar{i}}) = \tp_\Sigma(c_{\Bar{j}})\]

        \item We say that $I$ \emph{has the modeling property} if given any $(c_i)_{i \in I}$ there exists an $I$-indiscernible $(b_i)_{i \in I}$ based on $(c_i)_{i \in I}$. 
    \end{enumerate}
\end{defn}
\begin{fact}[Stretching indiscernibles, see \cite{Scowthesis}]\label{stretching-indiscernibles}
Let $I$ be an $\L'$-structure, $(b_i)_{i \in I}$ $I$-indiscernible. Then, for any $\L'$-structure $J$ with $\age(J) \subseteq \age(I)$, we can find $J$-indiscernible $(c_j)_{j \in J}$ based on $(b_i)_{i \in I}$.
\end{fact}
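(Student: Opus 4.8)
The plan is to define the $\L$-types of all finite subtuples of the desired sequence $(c_j)_{j \in J}$ by copying them off $(b_i)_{i \in I}$ according to quantifier-free $\L'$-types, to assemble these into a single partial type realized by compactness, and then to read off the two required properties. The role of the hypothesis $\age(J) \subseteq \age(I)$ is to guarantee that every finite quantifier-free $\L'$-configuration occurring in $J$ also occurs in $I$, so that there is always something to copy.

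First I would build a partial type $\Gamma$ in the variables $(x_j)_{j \in J}$ as follows. For each finite tuple $\bar j = j_0 \ldots j_{n-1}$ from $J$ and each $\L$-formula $\psi(y_0,\ldots,y_{n-1})$, I put $\psi(x_{j_0},\ldots,x_{j_{n-1}})$ into $\Gamma$ precisely when $\models \psi(b_{i_0},\ldots,b_{i_{n-1}})$ for some tuple $\bar i = i_0 \ldots i_{n-1}$ from $I$ with $\mathrm{qftp}_{\L'}(\bar i) = \mathrm{qftp}_{\L'}(\bar j)$. By $\age(J) \subseteq \age(I)$ the finite substructure of $J$ generated by $\bar j$ embeds into $I$, so at least one such $\bar i$ exists; and by $I$-indiscernibility of $(b_i)_{i \in I}$ the truth value of $\psi(b_{\bar i})$ is the same for every such $\bar i$. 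Hence $\Gamma$ contains exactly one of $\psi$ and $\neg\psi$ for each instance, and for each $\bar j$ it specifies a complete consistent $\L$-type $p_{\bar j}(x_{\bar j})$ depending only on $\mathrm{qftp}_{\L'}(\bar j)$.

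Next I would check that $\Gamma$ is finitely satisfiable in $\U$. A finite $\Gamma_0 \subseteq \Gamma$ involves only finitely many indices $j_0,\ldots,j_{m-1}$; choosing $\bar i$ from $I$ with $\mathrm{qftp}_{\L'}(\bar i) = \mathrm{qftp}_{\L'}(j_0\ldots j_{m-1})$ (again using the age condition), the assignment $x_{j_\ell} \mapsto b_{i_\ell}$ satisfies every formula of $\Gamma_0$ directly from the definition of $\Gamma$. By compactness and the saturation of $\U$ there is a realization $(c_j)_{j \in J}$ of $\Gamma$.

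Finally I would verify the conclusions. The sequence $(c_j)_{j \in J}$ is $J$-indiscernible because, by construction, $\tp_{\L}(c_{\bar j}) = p_{\bar j}$ depends only on $\mathrm{qftp}_{\L'}(\bar j)$, so equal quantifier-free $\L'$-types force equal $\L$-types. It is based on $(b_i)_{i \in I}$ because, given any finite set of formulas $\Sigma$, any $n$, and any $\bar j$ from $J$, the age condition yields $\bar i$ from $I$ with $\mathrm{qftp}_{\L'}(\bar i) = \mathrm{qftp}_{\L'}(\bar j)$, and then $\tp_{\L}(c_{\bar j}) = \tp_{\L}(b_{\bar i})$ by the definition of $\Gamma$, whence in particular $\tp_{\Sigma}(c_{\bar j}) = \tp_{\Sigma}(b_{\bar i})$. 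The only delicate point, and the step I expect to be the crux, is the well-definedness of $\Gamma$: one must see that the age inclusion always supplies a realizing tuple in $I$ and that $I$-indiscernibility makes the copied type independent of that choice. Once this coherence is secured, finite satisfiability and both verifications are routine.
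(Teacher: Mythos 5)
Your proof is correct. The paper does not actually prove this statement --- it is quoted as a Fact with a citation to Scow's thesis --- and your argument is precisely the standard one given there: extract the EM-type of $(b_i)_{i \in I}$ (well-defined on quantifier-free $\L'$-types by $I$-indiscernibility, and defined on all of them occurring in $J$ by $\age(J) \subseteq \age(I)$), transfer it to the variables $(x_j)_{j \in J}$, and realize it by compactness, with the only implicit convention being that the monster model is taken large enough relative to $|J|$.
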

In her thesis (\cite{Scowthesis}), Scow showed a profound connection between Ramsey classes (See, e.g., \cite{Scowthesis}, Definition 3.2.6) and the modeling property for generalized indiscernibles. 
\begin{fact}\label{ramsey-modelingproperty}
    Let $I$ be a structure in a finite relational language and $\K = \age(I)$. Then $\K$ is a Ramsey class if and only if $I$ has the modeling property.
\end{fact}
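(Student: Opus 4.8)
The plan is to prove the two implications separately, with compactness as the bridge between the finitary Ramsey statements and the infinitary modeling property. Throughout, write $\binom{C}{A}$ for the set of copies of $A$ in $C$, so that $\K$ being Ramsey means: for all $A,B \in \K$ and all $r<\omega$ there is $C \in \K$ with $C \to (B)^{A}_{r}$, i.e. every $r$-colouring of $\binom{C}{A}$ admits a copy of $B$ on which $\binom{B}{A}$ is monochromatic. Since $\K = \age(I)$, every such $C$ embeds into $I$, and this is what lets me move colourings back and forth between abstract members of $\K$ and configurations of parameters indexed by $I$. I will also use that copies of $A$ are enumerated compatibly with a fixed isomorphism to $A$, so that ``being a copy of $A$'' is exactly the condition $\mathrm{qftp}_{\L'}(\bar{\imath}) = \mathrm{qftp}_{\L'}(A)$ on the index tuple.

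For the direction Ramsey $\Rightarrow$ modeling, fix an arbitrary family $(c_i)_{i\in I}$; I want an $I$-indiscernible $(b_i)_{i\in I}$ based on it. The idea is to build the Ehrenfeucht--Mostowski template of the desired indiscernible by specifying, for each quantifier-free type realised by a finite tuple from $I$ and each $\L$-formula $\fhi$ of matching arity, a truth value, and then to realise this template over all of $I$. Consistency of the template is exactly where Ramsey enters: fixing a finite set $\Delta$ of $\L$-formulas and a finite shape $A \in \K$, the map $\bar{\imath} \mapsto \tp_{\Delta}(c_{\bar{\imath}})$ is a finite colouring of $\binom{C}{A}$ for any $C \hookrightarrow I$; applying $C \to (B)^{A}_{r}$ with $r$ the number of $\Delta$-types and $B$ large produces a copy of $B$ inside $I$ on which the $\Delta$-type of an $A$-shaped subtuple of the $c$'s depends only on the quantifier-free type of its index tuple, and this stabilised value is genuinely attained by the $c$'s. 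Diagonalising over the finitely many shapes and over an exhaustion of $\L$ by finite $\Delta$ yields a finitely satisfiable partial type, whose realisation $(b_i)_{i\in I}$ (obtained by compactness, stretching via Fact \ref{stretching-indiscernibles} to index by all of $I$) is $I$-indiscernible by construction and based on $(c_i)_{i\in I}$ because every stabilised $\Delta$-type was realised by an actual tuple of $c$'s.

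For modeling $\Rightarrow$ Ramsey I argue contrapositively, producing from the failure of Ramsey a family with no based-on indiscernible. Not-Ramsey gives $A, B \in \K$ and $r<\omega$ such that every $C \in \K$ carries an $r$-colouring of $\binom{C}{A}$ with no monochromatic copy of $B$. A compactness argument assembles these into a single $\chi \colon \binom{I}{A} \to r$ with no monochromatic copy of $B$ in $I$ --- any finite set of ``this $B$-copy is not monochromatic'' constraints involves copies of $B$ sitting inside one finite substructure $C \hookrightarrow I$ (a member of $\K$), whose bad colouring witnesses those constraints. I then take an object language $\L$ with relations coding $\chi$, form an $\L$-structure $M$ on the universe of $I$ with $c_i = i$, and pass to a monster $\U \succ M$. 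If some $I$-indiscernible $(b_i)$ were based on $(c_i)$, then for any copy $\bar{k}$ of $B$, indiscernibility forces all $A$-subcopies of $b_{\bar{k}}$ (tuples $\bar{\imath}\subseteq\bar{k}$ of quantifier-free type $A$) to share an $\L$-type, hence a single colour; basing the configuration $b_{\bar{k}}$ back on the $c$'s returns a copy $c_{\bar{k}'}$ of $B$ all of whose $A$-subcopies carry that same colour, i.e. an honestly $\chi$-monochromatic copy of $B$, contradicting the choice of $\chi$. Thus no based-on indiscernible exists and $I$ fails the modeling property.

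I expect the forward direction to be the main obstacle: the bookkeeping that makes the template simultaneously consistent across all quantifier-free shapes and all finite $\Delta$, while preserving the based-on condition, is where the argument is genuinely delicate, whereas the converse reduces to one compactness assembly plus the rigidity of copies of $A$ inside $B$. A secondary point to handle with care, already flagged above, is the compatible enumeration of copies, which is what makes a colouring of $\binom{C}{A}$ and a $\Delta$-type of a parameter tuple literally the same datum.
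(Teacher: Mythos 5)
The paper does not prove Fact \ref{ramsey-modelingproperty}: it is imported from Scow's thesis, so the only meaningful comparison is with the argument there, which your outline essentially reproduces. Both of your directions are the standard ones --- Ramsey $\Rightarrow$ modeling by stabilizing the colourings $\bar\imath\mapsto\tp_\Delta(c_{\bar\imath})$ shape by shape and realizing the resulting finitely satisfiable scheme by compactness, with the based-on condition coming for free because each stabilized $\Delta$-type is the $\Delta$-type of an actual tuple of $c$'s; and modeling $\Rightarrow$ Ramsey contrapositively, assembling one bad colouring of $\binom{I}{A}$ by compactness, coding it with finitely many $|A|$-ary relations, and extracting a monochromatic copy of $B$ from any based-on indiscernible --- and both sketches are sound. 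Two cosmetic remarks: the treatment of the finitely many shapes is an \emph{iterated} application of the Ramsey property (find $C_1\to(B_0)^{A_1}_{r}$, then $C_2\to(C_1)^{A_2}_{r}$, and so on) rather than a diagonalisation, and the appeal to Fact \ref{stretching-indiscernibles} is superfluous, since compactness applied to variables indexed by all of $I$ already yields the indiscernible on the full index set.

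The point you defer as ``secondary'' --- the compatible enumeration of copies --- is actually the one place where correctness is at stake, and it should be promoted from a remark to a stated convention. If ``Ramsey class'' is read as colourings of \emph{unordered substructures}, the biconditional is false: let $I$ be an infinite set viewed as a structure in the empty relational language; its age, the class of finite pure sets, is Ramsey for substructure colourings (this is precisely Ramsey's theorem), yet $I$ fails the modeling property, since taking $(c_i)_{i\in I}$ to be distinct points of a dense linear order, any based-on $I$-indiscernible $(b_i)$ would consist of distinct points satisfying $\tp(b_ib_j)=\tp(b_jb_i)$, which is impossible. So both your identification of copies of $A$ with tuples satisfying $\mathrm{qftp}(\bar\imath)=\mathrm{qftp}(A)$, and the theorem itself, require the \emph{embedding} (enumerated-copy) version of the Ramsey property --- equivalently, the substructure version together with rigidity of all members of $\K$, which is how Scow's setting avoids the problem and why Ramsey classes of unordered structures are usually expanded by a linear order. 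With that convention made explicit, your two colouring translations (in the forward direction $\bar\imath\mapsto\tp_\Delta(c_{\bar\imath})$ as a colouring of enumerated copies; in the reverse direction the relations $P_j$ holding of exactly the tuples of colour $j$) are well defined, and the rest of your argument goes through.
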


\begin{fact}\label{gener-ind-hypergraph}
    Let $k\geq 2$. Applying the ``main theorem" in \cite{NESETRIL1983183}, we have that the class of finite ordered $k$-hypergraphs is a Ramsey class.
    By Fact \ref{ramsey-modelingproperty}, the generic ordered $k$-hypergraph has the modeling property.
\end{fact}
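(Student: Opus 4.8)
The plan is to reduce the modeling property entirely to the Ramsey property of the age, so that the only genuine input is the cited theorem of Nešetřil and Rödl. Concretely, the generic ordered $k$-hypergraph $I$ is a structure in the finite relational language $\L' = \{<,R\}$, where $<$ is a linear order and $R$ is a symmetric irreflexive $k$-ary relation (so $R$ records a set of $k$-element subsets). Its age $\K$ is the class of all finite ordered $k$-hypergraphs, and $I$ is the Fraïssé limit of $\K$. Since $\L'$ is finite and relational, Fact \ref{ramsey-modelingproperty} applies verbatim: $I$ has the modeling property \emph{if and only if} $\K = \age(I)$ is a Ramsey class. Thus it suffices to prove the Ramsey statement asserted in the first sentence of the Fact.

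For the Ramsey statement I would invoke the ``main theorem'' of \cite{NESETRIL1983183}, which yields the Ramsey property for classes of finite \emph{ordered} relational structures (in particular for classes obtained by omitting a prescribed family of irreducible configurations). Finite ordered $k$-hypergraphs are exactly the finite ordered structures in the language $\{R\}$ with $R$ symmetric and irreflexive, so they fall within this framework, and hence $\K$ is Ramsey. Feeding this back through Fact \ref{ramsey-modelingproperty} gives the modeling property for $I$ and completes the argument.

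The step I would treat as the real obstacle is checking that the relevant (ordered) form of the Nešetřil-Rödl theorem genuinely covers the uniform symmetric $k$-ary relation, rather than, say, only a single generic relation or a linear order alone. The point to keep in mind is that the linear order is essential: the class of \emph{unordered} finite $k$-hypergraphs is not Ramsey, and it is precisely the presence of $<$ — together with the fact that Nešetřil-Rödl produce Ramsey objects within the ordered category — that makes the reduction work. So the care in the proof is almost entirely in citing the correct (ordered, $k$-uniform) instance of that theorem; once it is in hand, Scow's equivalence in Fact \ref{ramsey-modelingproperty} does the rest.
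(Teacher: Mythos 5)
Your proposal is correct and follows exactly the paper's own argument: the Ramsey property of the class of finite ordered $k$-hypergraphs is taken from the ``main theorem'' of Ne\v{s}et\v{r}il--R\"odl, and Scow's equivalence (Fact \ref{ramsey-modelingproperty}, applicable since the language $\{<,R\}$ is finite and relational) then transfers this to the modeling property for the generic ordered $k$-hypergraph. Your added caution about needing the \emph{ordered} version of the theorem (since unordered $k$-hypergraphs do not form a Ramsey class) is a sensible remark, but it is the same route as the paper's.
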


\section{Patterns of consistency and inconsistency}\label{Sec-patterns}
In this section, we want to describe a general framework for talking about combinatorial properties defining dividing lines. We start with a visual representation of some of the properties of formulas defined in Definition \ref{classical-div-lines}. 

The order property $(\OP)$ for a formula $\fhi(x;y)$ can be represented as follows: there are $(b_i)_{i<\omega}$ from $ \U^y$ such that, for each $i<\omega$ we have
\begin{center}
\begin{tikzpicture}
    \node(b0){$b_0$};
    \node(b1)[right of = b0]{$b_1$};
    \node(dots1)[right of =b1]{...};
    \node(bi)[right of = dots1]{$b_i$};
    \node(bi+1)[right of=bi]{$b_{i+1}$};
    \node (dots2)[right of =bi+1]{...};
    \node(witness)[above of = bi]{$\exists x$};
    \draw[dashed] (witness) -- (b0);
    \draw[dashed] (witness) -- (b1);
    \draw[dashed] (witness) -- (dots1);
    \draw[dashed] (witness) -- (bi);
    \draw[] (witness) -- (bi+1);
    \draw[] (witness) -- (dots2);
\end{tikzpicture}
\end{center}
where the dashed lines represent $\neg \fhi$ and the solid lines represent $\fhi$. Similarly, the independence property $(\IP)$ can be visualized as follows: there are $(b_i)_{i<\omega}$ from $\U^y$ such that for every $X\subseteq \omega$ (circled elements below):

\begin{center}
\begin{tikzpicture}
    \node(b0)[circle,draw]{$b_0$};
    \node(b1)[right of = b0]{$b_1$};
    \node(b2)[right of =b1]{$b_2$};
    \node(b3)[circle,draw][right of =b2]{$b_3$};
    \node(b4)[right of = b3]{$b_4$};
    \node(b5)[circle,draw][right of=b4]{$b_5$};
    \node (dots2)[right of =b5]{...};
    \node(witness)[above of = b3]{$\exists x$};
    \draw[] (witness) -- (b0);
    \draw[dashed] (witness) -- (b1);
    \draw[dashed] (witness) -- (b2);
    \draw[dashed] (witness) -- (b4);
    \draw[] (witness) -- (b3);
    \draw[] (witness) -- (b5);
\end{tikzpicture}
\end{center}

A new element of complexity is added for describing the strict-order property $(\SOP)$: there is $(b_i)_{i<\omega}$ from $\U^y$ such that for all $i<\omega$ we have

\begin{center}
\begin{tikzpicture}
    \node(b0){$b_0$};
    \node(dots1)[right of =b0]{...};
    \node(bi)[right of = dots1]{$b_i$};
    \node(bi+1)[right of=bi]{$b_{i+1}$};
    \node (dots2)[right of =bi+1]{...};
    \node(witness)[above of = bi]{$\exists x$};
    \node(nowitness)[below of = bi+1]{$\neg \exists x$};
   
    \draw[dashed] (witness) -- (bi);
    \draw[] (witness) -- (bi+1);
     \draw[dashed] (nowitness) -- (bi+1);
    \draw[] (nowitness) -- (bi);
\end{tikzpicture}
\end{center}
We see that the these properties are realized by satisfying consistency ($\exists x$) or inconsistency ($\neg \exists x$) requests on positive (solid line i.e.\ $\fhi$) or negative (dashed line i.e.\ $\neg \fhi$) instances of the formula $\fhi(x;y)$ on subsets of the set of parameters. 
We are now ready to define \emph{patterns} as pairs $(\C,\I)$ where $\C$ and $\I$ will be sets of conditions of the form $(X^+,X^-)$; the conceptual correspondence should be 
\begin{align*}
    &\C \longleftrightarrow \exists x  &X^+ \longleftrightarrow \fhi\\
    &\I \longleftrightarrow \neg \exists x
    &X^- \longleftrightarrow \neg \fhi
\end{align*}
\begin{defn}\label{def-patterns}
    Fix $n<\omega$. An $n$-\emph{pattern (of consistency and inconsistency)} is a pair $(\C,\I)$ where $\C,\I \subseteq \mathcal{P}(n) \times \mathcal{P}(n) \setminus \{(\emptyset,\emptyset)\}$.
    
     A pair $(X^+,X^-)$ in $\C$ (resp. in $\I$) is called a \emph{consistency} (resp. \emph{inconsistency}) \emph{condition}, or just \emph{condition}, of the pattern $(\C,\I)$. 
\end{defn}
\begin{defn}\label{defn-exhibition-of-patterns}
    Let $\fhi(x;y)$ be a partitioned formula, let $n<\omega$ and let $(\C,\I)$ be a $n$-pattern. We say that $\fhi(x;y)$ \emph{exhibits} (or \emph{realizes}) the pattern $(\C,\I)$ (in $T$) if there is $B = (b_i)_{i<n}$ from $\U^y$ such that 
    \begin{itemize}
        \item[(i)] For all $\A = (A^+,A^-) \in \C$ the partial $\fhi$-type 
        \[p_\A^B(x) = \{\fhi(x;b_i) \: i\in A^+\}\cup\{\neg \fhi(x;b_j) \: j \in A^-\}\] is consistent, and
        \item[(ii)] for all $\mathcal{Z} = (Z^+,Z^-) \in \I$, the partial $\fhi$-type
        \[p_\mathcal{Z}^B(x) = \{\fhi(x;b_i) \: i \in Z^+\}\cup\{\neg \fhi(x;b_j) \: j \in Z^-\}\] is inconsistent.
    \end{itemize}
\end{defn} 

In the next proposition we show how to characterize some of the dividing lines using exhibition of patterns.
\begin{prop}\label{div-lines-as-patterns} Let $\fhi(x;y)$ be a partitioned formula. Then 
    \begin{enumerate}
    
    \item $\fhi(x;y)$ has $\OP$ if and only if, for all $n<\omega$, the formula $\fhi(x;y)$ exhibits the $n$-pattern $(\C_n,\I_n)$ where
    \[\C_n = \{(\{i,i+1,...,n-1\},\{0,...,i-1\}) \: i<n\}\]
    \[\I_n = \emptyset\]
    \item  $\fhi(x;y)$ has $\IP$ if and only if, for all $n<\omega$, the formula $\fhi(x;y)$ exhibits the $n$-pattern $(\C_n,\I_n)$ where
    \[\C_n = \{(X,n\setminus X) \: X \subseteq n\}\]
    \[\I_n = \emptyset\]
    \item  $\fhi(x;y)$ has $\SOP$ if and only if, for all $n<\omega$, the formula $\fhi(x;y)$ exhibits the $n$-pattern $(\C_n,\I_n)$ where
    \[\C_n = \{(\{i+1\},\{i\}) \: i<n-1\}\]
    \[\I_n = \{(\{i\},\{i+1\}) \: i<n-1\}\]
    
    \item For $1<k<\omega$, $\fhi(x;y)$ has $k$-$\TP$ if and only if, for all $n<\omega$, the formula $\fhi(x;y)$ exhibits the $n^{<n}$-pattern $(\C_n,\I_n)$ where
    \[\C_n = \{(\text{ path}_i,\emptyset) \: i \in n^n\}\]
    \[\I_n = \{(\{i_0,...,i_{k-1}\},\emptyset) \: i_0,...,i_{k-1} \text{ children of the same node}\}\]
    where $\{path_i \: i\in n^n\}$ is the collection of all the paths in the tree $n^{<n}$.
    
    \item  $\fhi(x;y)$ has $\TP_1$ if and only if, for all $n<\omega$, the formula $\fhi(x;y)$ exhibits the $n^{<n}$-pattern $(\C_n,\I_n)$ where
    \[\C_n = \{(\text{ path}_i,\emptyset) \: i \in n^n\}\]
    \[\I_n = \{(\{i,j\},\emptyset) \: i,j \text{ are incomparable elements in the tree } n^{<n}\}\]
    with a similar notation as above.
    \item For $1<k<\omega$, $\fhi(x;y)$ has $k$-$\TP_2$ if and only if, for all $n<\omega$, the formula $\fhi(x;y)$ exhibits the $n^2$-pattern $(\C_n,\I_n)$ where
    \[\C_n = \{(\text{ path}_i,\emptyset) \: i \in n^n\}\]
    \[\I_n = \{(\{i_0,...,i_{k-1}\},\emptyset) \: i_0,...,i_{k-1} \text{ are on the same row }\}\]
    Here, by a path in $n^2$, we mean a function $f\colon n \to n $ i.e.\ a choice of one element per row in the array $n^2$. 
    \end{enumerate}
\end{prop}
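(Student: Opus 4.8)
The plan is to prove each of the six equivalences by the same two-step recipe. First, observe via compactness that the (infinite) dividing-line property of Definition \ref{classical-div-lines} is equivalent to the existence of its finite approximations of every size $n$; second, check by unwinding the definitions that the finite approximation of size $n$ is \emph{literally} the assertion that $\fhi(x;y)$ exhibits the displayed $n$-pattern $(\C_n,\I_n)$. In every case the translation is the dictionary recorded just before Definition \ref{def-patterns}: a consistency condition $(A^+,A^-)\in\C_n$ says that the threshold/branch type $\{\fhi(x;b_i):i\in A^+\}\cup\{\neg\fhi(x;b_j):j\in A^-\}$ is realized by some point, and an inconsistency condition in $\I_n$ says the corresponding type is omitted. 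The forward direction (property $\Rightarrow$ exhibition) is then immediate: restrict the infinite witnessing configuration to its first $n$ parameters and read off the finite conditions. The reverse direction (exhibition for all $n$ $\Rightarrow$ property) is the compactness step.

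I would first dispatch the three linear cases (1)--(3). For $\OP$, a realization of the type attached to $(A^+,A^-)=(\{i,\dots,n-1\},\{0,\dots,i-1\})$ is exactly a point that is ``on'' above threshold $i$ and ``off'' below it; the half-graph witnesses of $\OP$ provide these after a harmless reindexing (strict versus non-strict inequality shifts the threshold by one). For $\IP$, the pattern $\C_n=\{(X,n\setminus X):X\subseteq n\}$ asserts that the sets $\fhi(\U;b_i)$ ($i<n$) have all $2^n$ Boolean atoms nonempty, i.e.\ that $\{b_i\}_{i<n}$ is shattered; here I would invoke the standard ``shattering'' reformulation of $\IP$, equivalent to Definition \ref{classical-div-lines}(2). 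For $\SOP$, the consistency of $(\{i+1\},\{i\})$ together with the inconsistency of $(\{i\},\{i+1\})$ says precisely $\fhi(\U;b_i)\subsetneq\fhi(\U;b_{i+1})$, so exhibiting the pattern for all $n$ produces arbitrarily long strictly increasing chains of instances; the full biconditional of Definition \ref{classical-div-lines}(3) for non-consecutive indices is then recovered from transitivity of $\subseteq$.

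For the three tree/array cases (4)--(6), the content is bookkeeping plus one compactness assembly. The index set $n^{<n}$ (resp.\ $n^2$) is matched with a depth-$n$, $n$-branching tree (resp.\ an $n\times n$ array); each consistency condition $(\mathrm{path}_i,\emptyset)$ has empty negative part and so encodes exactly ``paths/branches are consistent'', while the inconsistency conditions encode, respectively, $k$-inconsistency of the children of a common node ($k$-$\TP$), $2$-inconsistency of incomparable pairs ($\TP_1$), and $k$-inconsistency of rows ($k$-$\TP_2$). I would verify these matchings directly against Definition \ref{classical-div-lines}(4),(5),(6), noting that $X^-=\emptyset$ throughout, so only positive instances appear, exactly as in the tree properties.

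The step I expect to be the main obstacle is the compactness assembly in the reverse direction for the tree cases: from a depth-$n$, $n$-branching witness tree for every $n$ I must produce a single $(b_\eta)_{\eta\in\omega^{<\omega}}$ (resp.\ $(b_i^j)_{i,j<\omega}$) with \emph{all} branches consistent and \emph{all} levels/rows $k$-inconsistent simultaneously. The delicate point is that branch-consistency quantifies over the uncountably many $f\in\omega^\omega$ and each branch needs its own realizing point, so I would set up a single type in the parameter variables $(y_\eta)_{\eta\in\omega^{<\omega}}$ together with one auxiliary object variable per finite branch, assert finite branch-consistency and level-$k$-inconsistency as its conditions, and check finite satisfiability using the size-$n$ patterns for $n$ large enough to contain the finitely many nodes involved. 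This is the standard stretching/finite-approximation argument for tree configurations. The linear cases require only the routine compactness equivalence between $\OP$, $\IP$, $\SOP$ and their arbitrarily long finite versions, so no real difficulty arises there.
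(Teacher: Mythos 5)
Your proposal is correct and takes essentially the same approach as the paper: the paper's entire proof consists of the remark that the characterization is clear and that the usual definitions are recovered by compactness. Your write-up simply fills in the details the paper leaves implicit (the threshold reindexing for $\OP$, the shattering form of $\IP$, transitivity for $\SOP$, and the compactness assembly for the tree configurations), all of which check out.
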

\begin{proof}
    The characterization is clear. We get the usual definitions of the properties using compactness.  
\end{proof}

The above proposition shows that exhibition of patterns gives a general framework for talking about dividing lines in first-order theories. There are some exceptions. For instance, it is not clear whether the $\SOP_n$ hierarchy, for $n\geq 4$, can be characterized using pattern exhibition. There exists a characterization of $\SOP_3$ (\cite{kaplan2023generic}, Lemma 7.3) that can be described through patterns.
We also want to point out that, in the above proposition, $\SOP$ is the only property that is described using both consistency and inconsistency condition on both positive and negative instances of the formula. 
\begin{defn}
We say that an $n$-pattern $(\C,\I)$ is \emph{exhibitable} if there is a theory $T$ and a formula $\fhi(x;y)$ such that $\fhi$ exhibits $(\C,\I)$ in $T$. 
\end{defn}
There are ``syntactical reasons" that make a pattern not exhibitable; with the next definition we want to exclude those reasons from the picture. 
\begin{defn}\label{Reasonable}
    Fix $n<\omega$. A \emph{reasonable $n$-pattern (of consistency and inconsistency)} is an $n$-pattern such that 
    \begin{itemize}
        \item[(i)] For all $(Z^+,Z^-) \in \I$ and for all $(Y^+,Y^-) \in \C$, we have that $Z^\epsilon \not \subseteq Y^\epsilon$ for some $\epsilon \in \{+,-\}$. 
        \item[(ii)] For all $(A^+,A^-) \in \C$, $A^+ \cap A^- = \emptyset$.
        \item[(iii)] For all $(Z^+,Z^-) \in \I$, $Z^+ \cap Z^- = \emptyset$. 
    \end{itemize}
\end{defn}
\begin{rem}\label{rem_reas_exhib}
    In the above definition, a failure of condition (iii) doesn't really make a pattern not exhibitable; however, an inconsistency condition $(Z^+,Z^-)$ for which $Z^+ \cap Z^- \neq \emptyset$ doesn't add any meaningful request to the pattern and thus we avoid it.
    We also want to point out that ``reasonability" does not imply ``exhibitability". For example, a pattern in which $\I$ contains both $(\{0\},\emptyset)$ and $(\emptyset,\{0\})$ is not exhibitable. 
\end{rem}
Even assuming reasonability, deciding whether a pattern is exhibitable or not is a complex problem. More precisely, let $\mathrm{PSAT}$ be the following decision problem: given $n<\omega$ and a reasonable $n$-pattern $(\C,\I)$, is $(\C,\I)$ exhibitable?
\begin{prop}
    The decision problem $\mathrm{PSAT}$ is $\mathrm{NP}$-complete.
\end{prop}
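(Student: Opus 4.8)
The plan is to establish NP-completeness in two halves: first that $\mathrm{PSAT} \in \mathrm{NP}$, and second that $\mathrm{PSAT}$ is NP-hard by reducing a known NP-complete problem to it. The natural candidate for the hardness reduction is the satisfiability problem (SAT), given the suggestive name $\mathrm{PSAT}$ and the evident resemblance between consistency/inconsistency conditions and Boolean constraints.

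For membership in NP, I would first argue that exhibitability of a reasonable $n$-pattern $(\C,\I)$ is equivalent to the existence of a purely combinatorial object of size bounded by a function of $n$, so that a certificate can be checked in polynomial time. The key observation is that a formula $\fhi(x;y)$ exhibiting the pattern can be ``read off'' from the consistency conditions alone: for each $\A = (A^+,A^-) \in \C$ we need a point $a_\A \in \U^x$ realizing $p_\A^B$, and the behavior of $\fhi$ is determined by which of the finitely many points $a_\A$ satisfy $\fhi(\,\cdot\,;b_i)$ for each $i<n$. Concretely, I would show that $(\C,\I)$ is exhibitable if and only if there is a function assigning to each $\A \in \C$ a subset $S_\A \subseteq n$ (the indices $i$ for which the witnessing point satisfies $\fhi(x;b_i)$) such that (a) $A^+ \subseteq S_\A$ and $A^- \cap S_\A = \emptyset$ (the witness respects its own consistency condition), and (b) no such $S_\A$ triggers any inconsistency condition, i.e.\ for every $\mathcal{Z} = (Z^+,Z^-) \in \I$ and every $\A \in \C$, we do \emph{not} have $Z^+ \subseteq S_\A$ and $Z^- \cap S_\A = \emptyset$. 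One then builds an explicit model — for instance over the finite point set $\{a_\A : \A \in \C\}$ with $\fhi$ interpreted via the $S_\A$ — and checks, using reasonability, that the inconsistency conditions are genuinely unsatisfiable. The certificate is the family $(S_\A)_{\A \in \C}$, whose size and verification are polynomial in $n$ and $|\C| + |\I|$; this gives $\mathrm{PSAT} \in \mathrm{NP}$.

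For NP-hardness, I would encode a SAT (or 3-SAT) instance on variables $x_0,\dots,x_{n-1}$ into a reasonable $n$-pattern whose exhibitability mirrors satisfiability. The idea is to let the $n$ indices correspond to the $n$ propositional variables, to use a single consistency condition to force a candidate truth assignment, and to use the inconsistency conditions of $\I$ to forbid exactly the assignments that violate a clause. Reading $i \in S_\A$ as ``$x_i$ true'', a clause such as $x_a \vee \neg x_b \vee x_c$ is falsified precisely by assignments with $x_a$ false, $x_b$ true, $x_c$ false; I would encode its negation as an inconsistency condition $(Z^+, Z^-)$ with $Z^+ = \{b\}$ and $Z^- = \{a,c\}$, so that a witnessing $S_\A$ avoiding all inconsistency conditions is exactly a satisfying assignment. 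Verifying that the resulting pattern is \emph{reasonable} (conditions (i)–(iii) of Definition \ref{Reasonable}) and that the reduction is polynomial-time completes this direction.

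The main obstacle I expect is the combinatorial characterization of exhibitability underpinning the NP membership argument — specifically, proving that a model-theoretic witness (parameters in a monster model of \emph{some} theory) exists if and only if the finite system $(S_\A)_{\A \in \C}$ does. The forward direction is immediate by setting $S_\A = \{i < n : \models \fhi(a_\A; b_i)\}$ for realizations $a_\A$ of the $p_\A^B$; the reverse direction requires actually constructing a theory and a formula from the combinatorial data and verifying that the inconsistency conditions hold in the constructed model, where reasonability condition (i) is what prevents a consistency condition from accidentally forcing an inconsistency condition to be satisfiable. Getting this equivalence precisely right, and ensuring the encoding in the hardness reduction lands inside the class of reasonable patterns, are the delicate points; the remaining bookkeeping (polynomial bounds, certificate size) is routine.
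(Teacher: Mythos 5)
Your overall strategy --- a finite combinatorial characterization of exhibitability for membership in $\mathrm{NP}$, plus a translation between inconsistency conditions and Boolean clauses for hardness --- is exactly the route the paper indicates (the paper itself only sketches this, deferring details to the author's thesis). However, your characterization of exhibitability, on which the membership argument rests, is false as stated, and it fails at precisely a point the paper warns about. When $\C = \emptyset$, your conditions (a) and (b) are vacuous, so your criterion declares every reasonable pattern with no consistency conditions exhibitable, and your verifier accepts the empty certificate. But Remark \ref{rem_reas_exhib} gives a reasonable, non-exhibitable pattern of exactly this shape: $\C = \emptyset$, $\I = \{(\{0\},\emptyset),(\emptyset,\{0\})\}$, which would require both $\{\fhi(x;b_0)\}$ and $\{\neg\fhi(x;b_0)\}$ to be inconsistent --- impossible, since $\U^x \neq \emptyset$ and any $a \in \U^x$ realizes one of the two. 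The idea you are missing is that \emph{every} element of $\U^x$, not only the designated witnesses of consistency conditions, realizes a complete $\fhi$-type over $B$; hence exhibitability forces the existence of at least one $S \subseteq n$ that triggers no condition of $\I$, whether or not $\C$ is empty. The correct certificate is therefore a \emph{nonempty} family $W \subseteq \mathcal{P}(n)$ of size at most $\max(|\C|,1)$ such that every $\A = (A^+,A^-) \in \C$ has a witness $S \in W$ with $A^+ \subseteq S$ and $A^- \cap S = \emptyset$, and no member of $W$ triggers any condition of $\I$; your family $(S_\A)_{\A \in \C}$ coincides with this only when $\C \neq \emptyset$.

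The same oversight makes your model construction incomplete: in a one-sorted structure with universe $\{a_\A : \A \in \C\} \cup \{b_i : i<n\}$ and $\fhi$ read off a relation holding only between witnesses and parameters, each parameter $b_j$ is itself an element of $\U^x$ realizing the complete type corresponding to $S = \emptyset$, so any condition $(\emptyset,Z^-) \in \I$ becomes consistent and the construction breaks; you need either a two-sorted structure or an auxiliary device ensuring every element's associated subset of $n$ lies in $W$. Relatedly, in the hardness reduction the ``single consistency condition to force a candidate truth assignment'' does not exist as described: $(\emptyset,\emptyset)$ is excluded by Definition \ref{def-patterns}, and any other condition constrains the assignment. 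This is repaired by a dummy index (variables on $\{0,\dots,n-1\}$, $\C = \{(\{n\},\emptyset)\}$, clause conditions supported on $\{0,\dots,n-1\}$), or by taking $\C = \emptyset$ once the characterization is corrected. All of this is fixable, but as written the central ``if and only if'' is wrong, and with it the soundness of your $\mathrm{NP}$ verifier.
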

The idea of the proof is to first provide a more concrete characterization of exhibitability and then show how to reduce $\mathrm{PSAT}$ to $\mathrm{SAT}$ (\emph{the Boolean satisfiability probelm}) and vice-versa. The complete proof of the above will appear in the author's PhD thesis. 

\begin{defn}
    Fix $n<\omega$. A $n$-pattern $(\C,\I)$ is called \emph{condition complete}, or just \emph{complete}, if each one of its conditions is of the form $(X,n\setminus X)$.
\end{defn}
\begin{rem}
    Every complete pattern $(\C,\I)$ with $\C \cap \I = \emptyset$ is reasonable.  
\end{rem}
\begin{rem}
    As a justification for the above nomenclature, consider that, given any $n$-pattern $(\C,\I)$ and a formula $\fhi(x;y)$ exhibiting it with witnesses $B = (b_i)_{i<n}$, each condition $(X^+,X^-)$ is associated with a $\fhi$-type over $B$:
    \[p_{X^+,X^-}^B(x) = \{\fhi(x;b_i) \: i \in X^+\} \cup \{\neg \fhi(x;b_j) \: j \in X^-\}\]
    that is asked to be consistent or inconsistent based on where the condition lies (in $\C$ or in $\I$ respectively).
    Saying that a pattern is complete is saying that those types corresponding to conditions are all complete $\fhi$-types over $B$. 
\end{rem}
\begin{defn}
     A $n$-pattern $(\C,\I)$ is called \emph{fully complete} if it is complete, $\C \neq \emptyset$ and every condition of the form $(X, n\setminus X)$ is either in $\C$ or in $\I$ but not both.
\end{defn}

\begin{prop}\label{Fully-complete-patt_are_exhibitable}
    Every fully complete pattern is exhibitable.
\end{prop}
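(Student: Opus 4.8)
The plan is to reduce the statement to a purely combinatorial realization problem and then exhibit a single finite structure that solves it. Since $(\C,\I)$ is fully complete, the conditions $(X,n\setminus X)$ for $X\subseteq n$ are partitioned into $\C$ and $\I$: writing $\C=\{(X,n\setminus X):X\in S\}$ for some $\emptyset\neq S\subseteq\mathcal{P}(n)$, we have $\I=\{(X,n\setminus X):X\in\mathcal{P}(n)\setminus S\}$. For a formula $\fhi(x;y)$ and a tuple $B=(b_i)_{i<n}$, the condition $(X,n\setminus X)$ corresponds exactly to the complete $\fhi$-type over $B$ satisfied by an element $a$ with $\{i<n:\ \models\fhi(a;b_i)\}=X$. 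Thus exhibiting $(\C,\I)$ with $B$ amounts to building a model in which the set of $\fhi$-patterns over $B$ realized by some element is \emph{exactly} $S$.

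I would construct such a model directly. Take the language with a single binary relation $R$ and set $\fhi(x;y):=R(x,y)$. Fix some $X_0\in S$ (possible as $S\neq\emptyset$) and let $M$ be the finite structure with universe $\{b_0,\dots,b_{n-1}\}\cup\{a_X:X\in S\}$ (all elements distinct), where $R$ is defined by: $R(a_X,b_i)$ holds iff $i\in X$; $R(b_j,b_i)$ holds iff $i\in X_0$; and $R$ fails on every other pair. Then each witness $a_X$ realizes the pattern $X$ over $B=(b_i)_{i<n}$, and, crucially, every parameter $b_j$ realizes the pattern $X_0\in S$. Hence the set of patterns realized over $B$ is exactly $S$: it contains $S$ because of the witnesses $a_X$, and is contained in $S$ because every element of $M$ (witnesses and parameters alike) realizes a pattern lying in $S$.

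Finally, set $T=\mathrm{Th}(M)$, a complete theory with $M\models T$. For $X\in S$ (i.e.\ $(X,n\setminus X)\in\C$), the element $a_X$ realizes $\{\fhi(x;b_i):i\in X\}\cup\{\neg\fhi(x;b_j):j\notin X\}$, so this type is consistent. For $X\notin S$ (i.e.\ $(X,n\setminus X)\in\I$), we have $M\models\neg\exists x\big(\bigwedge_{i\in X}\fhi(x;b_i)\wedge\bigwedge_{j\notin X}\neg\fhi(x;b_j)\big)$; as this is a first-order sentence over the parameters $B$ and $M\preceq\U$, it also holds in the monster model, so the corresponding type is inconsistent. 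Thus $\fhi$ exhibits $(\C,\I)$ in $T$, and $(\C,\I)$ is exhibitable.

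I expect the only real subtlety to be the inconsistency side: one must ensure that no element \emph{whatsoever} — including the parameters $b_j$ and any auxiliary points — accidentally realizes a forbidden pattern $X\in\mathcal{P}(n)\setminus S$. This is precisely where the hypothesis $\C\neq\emptyset$ is used: it supplies a ``default'' pattern $X_0\in S$ into which all otherwise-unconstrained elements can be placed. (Indeed, were $\C$ empty, the parameters themselves would still realize some pattern, which $\I$ would then forbid, so the pattern could not be exhibited — matching Remark \ref{rem_reas_exhib}'s point that reasonability alone does not force exhibitability.)
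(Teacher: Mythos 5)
Your proof is correct, but it takes a genuinely different route from the paper's. You build, for each fully complete $n$-pattern separately, a tailor-made finite structure in the language of one binary relation, using the ``default pattern'' $X_0\in S$ to ensure that the parameters themselves (and hence all elements of the model) realize only patterns from $S$; exhibitability then follows by taking the complete theory of that finite structure. The paper instead fixes a single theory --- infinite atomic Boolean algebras --- and a single formula $\fhi(x;yw_0w_1)$, built from $\mathrm{At}(x)\land x\le y$ together with an equality switch on the auxiliary parameters $w_0,w_1$, and shows that this one formula exhibits every fully complete $n$-pattern for every $n$ (its case split on whether $(\emptyset,n)\in\I$ plays the role your $X_0$ plays). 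The uniformity is what the paper's approach buys: combined with Proposition \ref{fcp_ext}, it shows that one formula in that theory exhibits \emph{all} exhibitable patterns, i.e.\ the theory is $\SM$, which is exactly the point of the remark following the proposition; your per-pattern construction cannot yield that, since your theory changes with the pattern. What your approach buys is elementarity --- no Boolean algebra bookkeeping --- and your closing observation about where $\C\neq\emptyset$ is needed is exactly right (cf.\ Remark \ref{rem_reas_exhib}). One pedantic point: your witnessing model is finite, so the ``monster model'' of $T=\mathrm{Th}(M)$ is $M$ itself; this is harmless here because the relevant types are finite and consistency is absolute, but if one wants to stay squarely within the paper's monster-model conventions one can pad $M$ with infinitely many extra points, each realizing the default pattern $X_0$, and the argument goes through unchanged.
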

\begin{proof}
    We actually show something stronger: there is a theory $T$ and a formula $\fhi$ such that, for every $n<\omega$ and fully complete $n$-pattern, $\fhi$ exhibits it. 

    Let $T$ be the theory of infinite atomic Boolean algebras in the language $\L = \{\cup,\cap, (-)^c,0,1\}$, work in a model $\B$, let $\psi(x;y) = \mathrm{At}(x) \land x \leq y$ and define
    \[\fhi(x;yw_0w_1) = (\psi(x;y) \land w_0=w_1)\lor(\neg \psi(x;y) \land w_0 \neq w_1)\]
    where $\mathrm{At}(x)$ is the formula saying that $x$ is an atom. Fix $n<\omega$ and a fully complete $n$-pattern $(\C,\I)$. Enumerate $\C = \{(A_j,n\setminus A_j) \: j<k\}$. Since the pattern is fully complete, $\C \neq \emptyset$, so $k \geq 1$. Moreover, let $a_0,...,a_{k-1}$ be $k$ distinct atoms of $\B$.
    
    First, assume that $(\emptyset,n) \notin \I$. Let $c$ any element of $\B$ and, for each $i<n$, define 
    \[b_i = \bigcup\{a_j \: j<k, i \in A_j\}\]
    and $c_i^0 = c_i^1 = c$.
    We claim that $(b_ic_i^0c_i^1)_{i<n}$ witness the exhibition of $(\C,\I)$ by $\fhi$. Indeed, let $j<k$ and consider the condition $(A_j,n \setminus A_j) \in \C$. We have that $i \in A_j$ if and only if $a_j \leq b_i$ if and only if $a_j \models \mathrm{At}(x) \land x \leq b_i$ if and only if $a_j \models \fhi(x;b_ic_i^0c_i^1)$.
    This holds for every $j<k$ and thus any consistency condition is satisfied. Let $(Z,n\setminus Z) \in \I$. By assumption $Z \neq \emptyset$. Assume, toward contradiction, that there is $a \in \U^x$, such that $\models \fhi(a;b_ic_i^0c_i^1)$ if and only if $i \in Z$. Since $Z \neq \emptyset$, there is at least one $i$ such that $a$ is an atom below $b_i$. Thus there exists $j<k$ such that $a = a_j$. But now $i \in Z$ if and only if $\models \fhi(a_j;b_ic_i^0c_i^1)$ if and only if $a_j \leq b_i$ if and only if $i \in A_j$. Thus $Z = A_j$ and $(Z,n\setminus Z) \in \C$. Contradiction. Thus, in the case that $(\emptyset,n) \notin \I$, we are done. 
    
    Assume now that $(\emptyset,n) \in \I$. Fix two distinct elements of the model $c,c'$ and for $i<n$, $i \notin A_0$, define 
    \[b_i = \bigcup\{a_j \: j<k, i \in A_j\} \quad \text{ and take } c_i^0 = c = c_i^1 \]
    For $i \in A_0$, define 
    \[b_i = \bigcup\{a_j \: j<k, i \notin A_j\}  \quad \text{ and take } c_i^0 = c \neq c' = c_i^1 \]
    Let $B = (b_ic_i^0c_i^1)_{i<n}$.
    Fix $(A_j,n\setminus A_j) \in \C$ and suppose $j \neq 0$. 
    If $i \in n \setminus A_0$, we have 
    \[a_j \models \fhi(x;b_i\Bar{c_i}) \iff \mathrm{At}(a_j) \land (a_j \leq b_i) \iff a_j \leq b_i \iff i \in A_j \setminus A_0\]
    If $i \in A_0$, we have 
    \[a_j \models \fhi(x;b_i\Bar{c_i}) \iff \neg \mathrm{At}(a_j) \lor (a_j \not \leq b_i)\]
    \[\iff a_j \not \leq b_i \iff a_j \leq b_i^c = (\bigcup_{i \notin A_j} a_j)^c = \bigcap_{i \notin A_j}a_j^c \iff i \in A_j\]
    Hence, if $(A_j, n \setminus A_j) \neq (A_0,n\setminus A_0)$, $a_j \models p_{A_j,n \setminus A_j}^B(x)$. 
    Consider now $(A_0,n \setminus A_0)$ and fix an element $b$ that is not an atom. Then, if $i \in A_0$, since $b \models \neg \mathrm{At}(x)$, $b \models \fhi(x;b_icc')$. 
    If $i \notin A_0$, then, since $b \not \models \mathrm{At}(x) $, $b \models \neg\fhi(x;b_icc)$. 
    Thus, we showed that each consistency condition is satisfied. 

    Fix now $(Z,n\setminus Z) \in \I$. First assume that $Z = \emptyset$; we want to show that for any $a \in \mathcal{B}$, there is $i<n$ such that $a \models \fhi(x;b_i\Bar{c}_i)$. If $a \notin \mathrm{At}(\B)$, then $a \models \neg \mathrm{At}(x)$ and hence $a \models \fhi(x;b_i,cc')$ for any $i \in A_0$. If $a \in \mathrm{At}(\B) \setminus (a_j)_{0<j<k}$, then $a \not \leq b_i$ for $i \in A_0$ and hence $a \models \fhi(x;b_icc')$. If $a = a_j \neq a_0$, then $a_j \models \fhi(x;b_icc)$ for $i \in A_j$ (Note that $A_j \neq \emptyset$ since $(\emptyset,n) \in \I$). Hence $p_{(\emptyset,n)}^B(x)$ is inconsistent.

    Assume $Z \neq \emptyset$ and suppose that there is $a \in \mathcal{B}$ such that 
    \[a \models \fhi(x;b_i\Bar{c}_i) \iff i \in Z\]
    If $a$ is not one of the $(a_j)_{j<k}$, then $a \models \fhi(x;b_i\Bar{c}_i) \iff i \in A_0$ and hence $A_0 = Z$. Contradiction. 
    
    If $a = a_j$ for some $0<j<k$, then $a=a_j \models \fhi(x;b_i\Bar{c}_i) \iff i \in A_j$ and hence $A_j = Z$. Contradiction.
    If $a = a_0$, then if $i \in A_0$ then $a_0 = a \not \leq b_i$ and hence $a \models\fhi(x;b_icc')$; on the other hand, if $i \notin A_0$, then $a_0 = a \not \leq b_i$ and hence $a\not\models \fhi(x;b_icc) $. Hence $Z = A_0$. Contradiction.
    Hence $p_{(Z,n\setminus Z)}^B(x)$ is inconsistent.

    In conclusion, for every $n<\omega$, $\fhi$ exhibits every fully complete $n$-pattern. 
\end{proof}
\begin{rem}
    In the above proof we gave an example of a formula $\fhi$ that, in a theory $T$, exhibits, for every $n<\omega$, every fully complete $n$-pattern. In the next proposition, we will show that any exhibitable pattern can be extended (in some sense) to a fully complete pattern. Thus, the above proof gives an example of a theory $T$ and a formula $\fhi$ such that for any $n<\omega$ and any exhibitable $n$-pattern, $\fhi$ exhibits it. This property is what, in the next section, we will call \emph{Straight up Maximality} (similar to Shelah's nomenclature from \cite{Shel-modeltheory}); thus the theory of infinite atomic Boolean algebras is an example of a straight up maximal theory.  
\end{rem}

\begin{prop}\label{fcp_ext}
    Fix $n<\omega$ and let $(\C,\I)$ any exhibitable $n$-pattern. Then, there exists a fully complete $n$-pattern $(\C',\I')$ such that, if a formula exhibits $(\C',\I')$, then it exhibits $(\C,\I)$. 
\end{prop}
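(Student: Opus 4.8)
The plan is to read off a canonical fully complete pattern directly from a witness of exhibitability. Since $(\C,\I)$ is exhibitable, fix a theory $T$, a formula $\fhi(x;y)$ and a witness $B=(b_i)_{i<n}$ from $\U^y$ such that $\fhi$ exhibits $(\C,\I)$ via $B$. Each complete condition $(X,n\setminus X)$ ($X\subseteq n$) corresponds to the complete $\fhi$-type $p_{X,n\setminus X}^B(x)$ over $B$, which is either consistent or not, and I would simply sort the complete conditions accordingly:
\[\C' = \{(X,n\setminus X) \: X\subseteq n \text{ and } p_{X,n\setminus X}^B(x) \text{ is consistent}\},\]
\[\I' = \{(X,n\setminus X) \: X\subseteq n \text{ and } p_{X,n\setminus X}^B(x) \text{ is inconsistent}\}.\]
First I would verify that $(\C',\I')$ is fully complete: every listed condition is complete by construction, the law of excluded middle places each $(X,n\setminus X)$ in exactly one of $\C',\I'$, and $\C'\neq\emptyset$ because any $a\in\U^x$ realizes the complete type given by $X=\{i<n \: \models\fhi(a;b_i)\}$, so that $(X,n\setminus X)\in\C'$.

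The heart of the argument is the implication. Suppose $\psi(x;y)$ exhibits $(\C',\I')$ via $B'=(b'_i)_{i<n}$; I claim that $\psi$ exhibits $(\C,\I)$ via the \emph{same} $B'$. The underlying elementary observation is the standard fact that a finite partial type is consistent iff it extends to a complete type over the relevant finite set: a condition $(U^+,U^-)$ is consistent (for a given formula and witness) iff there is $X\subseteq n$ with $U^+\subseteq X$, $U^-\subseteq n\setminus X$ and $(X,n\setminus X)$ consistent. For a consistency condition $(A^+,A^-)\in\C$, its $\fhi$-consistency at $B$ yields a realization $a$; setting $X=\{i \: \models\fhi(a;b_i)\}$ gives $A^+\subseteq X$, $A^-\subseteq n\setminus X$ and $(X,n\setminus X)\in\C'$. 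Since $\psi$ exhibits $(\C',\I')$, the type $p_{X,n\setminus X}^{B'}$ is $\psi$-consistent, and any realization of it witnesses $\psi$-consistency of $p_{A^+,A^-}^{B'}$, as $A^+\subseteq X$ and $A^-\subseteq n\setminus X$.

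For an inconsistency condition $(Z^+,Z^-)\in\I$, I would argue by contradiction: if $p_{Z^+,Z^-}^{B'}$ were $\psi$-consistent, realized by $a'$, put $X=\{i \: \models\psi(a';b'_i)\}$, so $Z^+\subseteq X$ and $Z^-\subseteq n\setminus X$ and $p_{X,n\setminus X}^{B'}$ is $\psi$-consistent. By full completeness $(X,n\setminus X)$ lies in $\C'$ or $\I'$; it cannot lie in $\I'$, since then $\psi$ exhibiting $(\C',\I')$ would force $p_{X,n\setminus X}^{B'}$ to be inconsistent, contradicting $a'$. Hence $(X,n\setminus X)\in\C'$, which by definition means $p_{X,n\setminus X}^B$ is $\fhi$-consistent; its realization then satisfies $p_{Z^+,Z^-}^B$ because $Z^+\subseteq X$ and $Z^-\subseteq n\setminus X$, contradicting $(Z^+,Z^-)\in\I$. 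Thus $p_{Z^+,Z^-}^{B'}$ is $\psi$-inconsistent.

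I expect the inconsistency direction to be the main obstacle, since it requires transferring inconsistency from the original witness $B$ (under $\fhi$) to the new witness $B'$ (under $\psi$), where a priori the two formulas behave differently. The mechanism that makes this work is precisely full completeness of $(\C',\I')$: it forces any complete type that is $\psi$-consistent at $B'$ to already be $\fhi$-consistent at $B$, which is exactly what closes the contradiction. The consistency direction, by contrast, is the easy ``domination'' step and requires no delicate bookkeeping.
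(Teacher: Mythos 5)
Your proof is correct and follows essentially the same route as the paper: both construct $(\C',\I')$ by sorting the complete conditions $(X,n\setminus X)$ according to whether the corresponding complete $\fhi$-type over the original witness $B$ is consistent, and both transfer consistency upward (a realization's complete type dominates the partial condition) and inconsistency by the same contradiction through full completeness. The only difference is cosmetic: you additionally spell out the verification that $(\C',\I')$ is fully complete (in particular that $\C'\neq\emptyset$), which the paper leaves implicit.
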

\begin{proof}
    Since $(\C,\I)$ is exhibitable, there is a theory $T$, a formula $\fhi(x;y)$ and parameters $B = (b_i)_{i<n}$ such that $\fhi$ exhibits $(\C,\I)$ in $T$ with $B$. As a convention for this proof, a complete $\fhi$-type over $B$ is not necessarily consistent.
    Let $\mathrm{Con}(\fhi,B)$ be the set of complete consistent $\fhi$-types over $B$ and, similarly, let $\mathrm{Inc}(\fhi,B)$ be the set of complete inconsistent $\fhi$-types over $B$. To each complete $\fhi$-type $p$, we can associate a condition $(X_p,n\setminus X_p)$ where $X_p = \{i < n \: \fhi(x;b_i) \in p\}$. Define 
    \[\C' = \{(A_p,n \setminus A_p) \: p \in \mathrm{Con}(\fhi,B)\}\]
    \[\I' = \{(Z_p,n \setminus Z_p) \: p \in \mathrm{Inc}(\fhi,B)\}\]
    Suppose $\psi(x;y)$ exhibits $(\C',\I')$ in $T$ with $B' = (b_i')_{i<n}$. 

    Given $(A^+,A^-) \in \C$, since $\fhi(x;y)$ exhibits $(\C,\I)$ with $B$, we have that $p_{(A^+,A^-)}^{B,\fhi}(x)$ is consistent and thus it is contained in some $p \in \mathrm{Con}(\fhi,B)$. Thus, there is $(A,n\setminus A) \in \C'$ such that $(A^+,A^-) \subseteq (A,n \setminus A)$. Since $p_{(A,n\setminus A)}^{B',\psi}(x)$ is consistent, it must be that $p_{(A^+,A^-)}^{B',\psi}(x)$ is consistent too. 

    Let $(Z^+,Z^-) \in \I$ and assume, toward contradiction, that $p_{(Z^+,Z^-)}^{B',\psi}(x)$ is realized by some $a \in \U^x$. By the choice of $\fhi$ and $B$, we have that $p_{(Z^+,Z^-)}^{B,\fhi}(x)$ is inconsistent; thus, for any $X \subseteq n$ such that $(Z^+,Z^-) \subseteq (X,n \setminus X)$, we have that $(X,n \setminus X) \in \I'$. But now $\tp^\psi(a/B')$ gives us a set $X \subseteq n$ such that $(Z^+,Z^-) \subseteq (X,n\setminus X)$ which is in $\I'$ and the corresponding type has a realization. Contradiction.
\end{proof}

\section{Notions of Maximality}\label{Sec-Maximality}
In this section we define a notion of maximal complexity for first-order theories by requesting the exhibition (by a single formula) of all the exhibitable patterns. 
\begin{defn}
    A formula $\fhi(x;y)$ is \emph{Straight up Maximal} (or just $\SM$) if, for every $n<\omega$ and for every exhibitable $n$-pattern $(\C,\I)$, $\fhi(x;y)$ exhibits $(\C,\I)$.
    A theory $T$ is $\SM$ if there is a formula which is $\SM$; otherwise we say that $T$ is $\NSM$. 
\end{defn}
Other notions of maximality were previously defined by Shelah (\cite{Shel-modeltheory}) and Cooper (\cite{cooper}). We will see how both of them agree with our notion of maximality and we will give some examples of theories which present/omit this property.  

\subsection{Shelah's maximality}
In \cite{Shel-modeltheory}, Shelah defined ``an extreme condition of the form of unstability" called Straight Maximality.
\begin{defn}
    A formula $\fhi(x;y)$ is \emph{Shelah-$\SM$} (Shelah's Straight maximal) if for every $n<\omega$ and for every non-empty $\mathscr{F} \subseteq 2^n$ there are $b_0,...,b_{n-1} \in \U^y$ such that: Given $f \in 2^n$, $f \in \mathscr{F}$ if and only if there is $a \in \U^x$ such that for every $i<n$, $\models \fhi(a;b_i) \iff f(i) =1$.
\end{defn}
We can characterize the above property using patterns.
\begin{prop}\label{fcp-shelahSM}
    A formula $\fhi(x;y)$ is Shelah-$\SM$ if and only if $\fhi(x;y)$ exhibits every fully complete pattern. 
\end{prop}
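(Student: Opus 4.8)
The plan is to set up an explicit bijection between non-empty subsets $\mathscr{F} \subseteq 2^n$ and fully complete $n$-patterns, under which Shelah's condition for $\mathscr{F}$ becomes precisely the exhibition of the corresponding pattern. The bridge is the identification of a function $f \in 2^n$ with the set $X_f = \{i < n \: f(i) = 1\}$ and hence with the complete condition $(X_f, n \setminus X_f)$. Under this identification, the clause ``there is $a \in \U^x$ such that for every $i<n$, $\models \fhi(a;b_i) \iff f(i)=1$'' in the definition of Shelah-$\SM$ says exactly that the complete $\fhi$-type $p_{(X_f,\, n\setminus X_f)}^B(x)$ is consistent (realized in $\U$). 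So Shelah's biconditional — that $f \in \mathscr{F}$ if and only if such an $a$ exists — translates verbatim into: $p_{(X_f,\, n\setminus X_f)}^B$ is consistent when $f \in \mathscr{F}$ and inconsistent when $f \notin \mathscr{F}$.

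For the forward direction, I would assume $\fhi$ is Shelah-$\SM$ and fix a fully complete $n$-pattern $(\C,\I)$. Put $\mathscr{F} = \{f \in 2^n \: (X_f, n \setminus X_f) \in \C\}$; since $\C \neq \emptyset$ this set is non-empty. Applying Shelah-$\SM$ to $\mathscr{F}$ yields witnesses $B = (b_i)_{i<n}$. Because the pattern is fully complete, each condition $(X, n\setminus X)$ lies in exactly one of $\C$ and $\I$, so, writing $f_X$ for the characteristic function of $X$, we have $f_X \in \mathscr{F}$ exactly for the consistency conditions and $f_X \notin \mathscr{F}$ exactly for the inconsistency conditions; the translation above then gives consistency of $p_{(X,\, n\setminus X)}^B$ for $(X, n\setminus X) \in \C$ and inconsistency for $(X, n\setminus X) \in \I$. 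Thus $B$ witnesses that $\fhi$ exhibits $(\C,\I)$.

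For the reverse direction, I would assume $\fhi$ exhibits every fully complete pattern, fix $n<\omega$ and a non-empty $\mathscr{F} \subseteq 2^n$, and define $\C = \{(X_f, n\setminus X_f) \: f \in \mathscr{F}\}$ and $\I = \{(X, n\setminus X) \: X \subseteq n,\ f_X \notin \mathscr{F}\}$. This pattern is complete by construction, has $\C \neq \emptyset$ since $\mathscr{F} \neq \emptyset$, and places each condition $(X, n\setminus X)$ in exactly one of $\C,\I$; hence it is fully complete. By hypothesis $\fhi$ exhibits it with some $B = (b_i)_{i<n}$, and reading the translation backwards shows that $B$ is exactly the tuple required by Shelah's definition for $\mathscr{F}$. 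As $n$ and $\mathscr{F}$ were arbitrary, $\fhi$ is Shelah-$\SM$.

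The argument is essentially a dictionary unwinding, so I do not expect a genuine obstacle; the only point demanding care is to confirm that the two ``boundary'' requirements match up on the nose — that $\C \neq \emptyset$ corresponds to $\mathscr{F}$ being non-empty, and that the ``either in $\C$ or in $\I$ but not both'' clause of full completeness is exactly what guarantees every $f \in 2^n$ receives a definite consistent/inconsistent verdict, matching the biconditional in Shelah's definition rather than a one-sided implication.
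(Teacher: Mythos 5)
Your proof is correct and takes essentially the same route as the paper: both directions proceed by the dictionary $f \leftrightarrow (X_f, n\setminus X_f)$ between elements of $2^n$ and complete conditions, defining $\mathscr{F}$ from $\C$ (forward) and $(\C,\I)$ from $\mathscr{F}$ (backward) exactly as the paper does, with the same use of $\C \neq \emptyset$ and the ``exactly one of $\C,\I$'' clause. No gaps; the only implicit step (consistency of a finite $\fhi$-type over finitely many parameters equals realizability in $\U$) is harmless and you flag it yourself.
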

\begin{proof}
    Assume $\fhi$ is Shelah-$\SM$, fix $n<\omega$ and let $(\C,\I)$ a fully complete $n$-pattern. Define $\mathscr{F} = \{\mathrm{char}(X) \: (X,n\setminus X) \in \C\} \subseteq 2^n$ where $\mathrm{char}(X)\colon n \to 2$ is the characteristic function of $X \subseteq n$. Since $\C \neq \emptyset$ (given that the pattern is fully complete), we have that $\mathscr{F}$ is not empty. Let $B = \{b_0,...,b_{n-1}\}$ given by Shelah-$\SM$. Now, fix a condition $(Y^+,Y^-) = (Y,n\setminus Y) \in \C$. This corresponds to a function $f\colon n \to 2$, $f \in \mathscr{F}$, such that $f(i) = 1$ if and only if $i \in Y$. Hence there is $a \in \U^x$ such that $\models \fhi(a;b_i)$ if and only if $f(i) = 1$ if and only if $i \in X$; thus $a$ witness the consistency of $p_{Y^+,Y^-}^B(x)$. Now, given a condition of inconsistency $(Z^+,Z^-) = (Z,n\setminus Z) \in \I$, if there was $a \in \U^x$ such that $\models \fhi(a;b_i)$ if and only if $i \in Z$ if and only if $\mathrm{char}(Z)(i) = 1$, we would have that $\mathrm{char}(Z) \in \mathscr{F}$; but it's not. Contradiction. Hence the type $p_{Z^+,Z^-}^B(x)$ is inconsistent. This shows that $\fhi$ exhibits $(\C,\I)$. In conclusion (of the proof of this direction), $\fhi$ exhibits every fully complete pattern.

    On the other hand, assume that $\fhi$ exhibits every fully complete pattern. Given $n<\omega$ and $\emptyset \neq \mathscr{F} \subseteq 2^n$, define the $n
    $-pattern. $(\C,\I)$ as follows
    \[\C = \{(f^{-1}(\{1\}), f^{-1}(\{0\})) \: f \in \mathscr{F}\}\]
    \[\I = \{(f^{-1}(\{1\}), f^{-1}(\{0\})) \: f \in 2^n \setminus \mathscr{F}\}\]
    Clearly this is a fully complete pattern.
    Along the lines of the proof of the previous part, it's clear that a witness $B = \{b_0,...,b_{n-1}\}$ of the exhibition of $(\C,\I)$ by $\fhi$ is also a witness of $\fhi$ being Shelah-$\SM$. 
\end{proof}
\begin{prop}
    A formula is Shelah-$\SM$ if and only if it is $\SM$.
\end{prop}
\begin{proof}
Let $\fhi(x;y)$ be a $\SM$ formula i.e.\ it realizes every exhibitable pattern. Since every fully complete pattern is exhibitable, $\fhi$ exhibits every fully complete pattern. Hence $\fhi$ is Shelah-$\SM$ by Proposition $\ref{fcp-shelahSM}$. 

On the other hand, assume that $\fhi(x;y)$ is Shelah-$\SM$; then it realizes every fully complete pattern. Given an exhibitable pattern $(\C,\I)$, by Proposition \ref{fcp_ext}, there is a fully complete pattern $(\C',\I')$ such that, if a formula realizes $(\C',\I')$ then it realizes $(\C,\I)$. Since $\fhi$ realizes every fully complete pattern, it realizes every exhibitable pattern. In conclusion, it is $\SM$. 
\end{proof}

\subsection{Cooper's maximality}
In \cite{cooper}, Cooper defined a condition called $1$, requesting the existence of a formula which defines, in some sense, all the possible Venn diagrams. They then proved that this property implies every other property of this kind. For more details about the background and the maximality property, we refer to \cite{cooper}. 

\begin{defn}[Cooper, \cite{cooper}] \label{prop-of-formulas}
\begin{itemize}
    \item A \emph{property of formulas} is any sequence $\rho = (\rho_i)_{i<\omega}$ of consistent quantifier free (q.f.) formulas of $\mathrm{BA}$ (the theory of Boolean Algebras with the language $\L = \{0,1,(-)^c,\cup,\cap\}$).
\item If $\fhi(x,y)$ is a formula of a complete theory $T$ and $\psi(z)$ is a q.f.\ formula of $\mathrm{BA}$, we say that $\fhi(x,y)$ \emph{admits} $\psi(z)$ \emph{in} $T$ if 
\[\mathcal{P}(\M^x) \models \psi(\fhi(\M,a_0),...,\fhi(\M,a_{|z|-1}))\]
for some $\M \models T $ and some $a_0,...,a_{|z|-1} \in \M^x$ i.e. there is some model of $T$ and parameters such that, in the power set of the set of tuples of the model (of the right length) viewed as a Boolean algebra, the $\fhi$-definable sets given by the parameters satisfy $\psi$. 
Otherwise we say that $\fhi(x,y)$ \emph{omits} $\psi(z)$ in $T$.

\item If $\rho$ is a property of formulas, we say that a formula $\fhi(x,y)$ of a complete theory $T$ \emph{admits} $\rho$ \emph{in} $T$ if there exists a strictly increasing sequence $\alpha \in \omega^\omega$ such that $\fhi(x,y)$ admits $\rho_{\alpha(i)}$ in $T$ for every $i<\omega$.
Otherwise we say that $\fhi(x,y)$ \emph{omits} $\rho$ \emph{in} $T$.
Moreover we say that $T$ \emph{admits $\rho$} if there exists $\fhi(x,y)$ of $T$ such that $\fhi(x,y)$ admits $\rho$ in $T$. Otherwise $T$ \emph{omits $\rho$}.
\end{itemize}
\end{defn}

Cooper's notion of maximality (property $1$) is a property of formulas (as in Definition \ref{prop-of-formulas}) which implies all the other properties of formulas. In order to define $1$, Cooper started with an enumeration of all the consistent q.f.\ formulas of $\mathrm{BA}$, say $\{\psi_i \: i <\omega\}$, and defined
\[\rho_i = \bigwedge_{j<i} \psi_i\]
Finally, $1$ is defined as the property of formulas $(\rho_i)_{i<\omega}$.

The next definition gives us a more explicit equivalent (at the level of theories) version of property $1$. 
\begin{defn}[Cooper, \cite{cooper}]
For each $0<n<\omega$ we define the property $1^{(n)}$ as follows:

A formula $\fhi(x,y)$ admits $1^{(n)}$ in some complete theory $T$ if for arbitrarily large $n\leq m <\omega$ there are $(b_i)_{i<m}$ in some model $\U^y$ such that
\begin{itemize}
    \item[(i)] for any $l\leq m$ and for any $X \in \mathcal{P}(m)$ with $|X| = l$ there is $b_X$ such that 
    \[\bigcup_{i \in X}\fhi(\U,b_i) = \fhi(\U,b_X)\]
    \item[(ii)] for any $Y \in \mathcal{P}(m)$
    \[\bigcap_{i \in Y}\fhi(\U,b_i) = \emptyset \text{ if and only if } |Y|>n\]
\end{itemize}
In other words, $\fhi(x,y)$ admits $1^{(n)}$ if and only if for arbitrarily large $n\leq m <\omega$ there are $m$ $\fhi$-definable sets such that the union of any $l\leq m $ is still $\fhi$-definable but the intersection of any $l\leq m$ of them is non-empty if and only if $l \leq n$.
\end{defn}
\begin{fact}[Cooper, \cite{cooper}]\label{1^n_prop}
If $T$ is a complete theory and $0<n<\omega$, the following hold:
\begin{itemize}
    \item[(1)] If some formula $\fhi(x,y)$ of $T$ admits $1^{(n)}$ in $T$ then some formula $\psi(x,z)$ of $T$ admits $1^{(1)}$ in $T$.
    \item[(2)] If some formula $\psi(x,z)$ of $T$ admits $1^{(1)}$ in $T$, then some formula $\chi(x,w)$ of $T$ admits $1$ in $T$. 
\end{itemize}
\end{fact}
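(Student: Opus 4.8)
The plan is to prove the two reductions separately: in (1) I lower the ``intersection depth'' from $n$ to $1$ in one stroke, and in (2) I upgrade an arbitrary $1^{(1)}$--configuration to the full property $1$. The disjointness manufactured in (1) is precisely what makes (2) run, because it forces the definable unions to organize themselves into a genuine finite Boolean algebra.

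For (1), assume $\fhi(x;y)$ admits $1^{(n)}$ (the case $n=1$ being vacuous), fix an arbitrarily large $m$ and witnesses $S_i=\fhi(\U,b_i)$, $i<m$, with $\bigcap_{i\in Y}S_i\neq\emptyset\iff|Y|\le n$ and every union $\bigcup_{i\in X}S_i$ equal to some $\fhi(\U,b_X)$. Put $F=\{m-n+1,\dots,m-1\}$, so $|F|=n-1$, let $P=\bigcap_{s\in F}S_s$, and set
\[\psi(x;y,y_1,\dots,y_{n-1})=\fhi(x;y)\wedge\bigwedge_{s=1}^{n-1}\fhi(x;y_s).\]
For $i\notin F$ define $S_i'=S_i\cap P=\psi(\U;b_i,b_{m-n+1},\dots,b_{m-1})$. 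Since $S_i'=\bigcap_{l\in\{i\}\cup F}S_l$ with $|\{i\}\cup F|=n$, each $S_i'$ is nonempty, while $S_i'\cap S_j'=\bigcap_{l\in\{i,j\}\cup F}S_l=\emptyset$ for $i\neq j$ because $|\{i,j\}\cup F|=n+1$; this is exactly the intersection pattern of $1^{(1)}$. Union--closure survives by distributivity: $\bigcup_{i\in X}S_i'=\big(\bigcup_{i\in X}S_i\big)\cap P=\psi(\U;b_X,b_{m-n+1},\dots,b_{m-1})$, so each union of the $S_i'$ is again a single $\psi$--instance. As $m$ grows, so does the number $m-n+1$ of available sets, and hence $\psi$ admits $1^{(1)}$.

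For (2), assume $\psi(x;z)$ admits $1^{(1)}$ and fix witnesses $T_0,\dots,T_{m'-1}$ that are pairwise disjoint, nonempty, $\psi$--definable, and whose every union $\bigcup_{k\in X}T_k$ is $\psi$--definable. To interpret the Boolean top and complements inside $\mathcal{P}(\U^x)$, I pass to the toggled formula
\[\chi(x;y,w_0,w_1)=\big(\psi(x;y)\wedge w_0=w_1\big)\vee\big(\neg\psi(x;y)\wedge w_0\neq w_1\big),\]
so that $\chi(\U;a,e,e)=\psi(\U;a)$ and $\chi(\U;a,e,e')=\overline{\psi(\U;a)}$ whenever $e\neq e'$; thus $\chi$ defines every $\psi$--instance and every complement of one. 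Given a target $r$ with $m'\ge 2^r$, replace the atom $T_0$ by $T_0'=\overline{T_1\cup\dots\cup T_{2^r-1}}$, a $\chi$--definable and nonempty set; then $T_0',T_1,\dots,T_{2^r-1}$ are disjoint, cover $\U^x$, and every union of them is $\chi$--definable, being a $\psi$--union when it omits $T_0'$ and the complement of a $\psi$--union otherwise. These $2^r$ atoms therefore generate a subalgebra of $\mathcal{P}(\U^x)$ isomorphic to $\mathcal{P}(2^r)$ with top $\U^x$, all of whose elements are $\chi$--definable. By definition of a property of formulas, each $\rho_i=\bigwedge_{j<i}\psi_j$ is a \emph{consistent} q.f.\ formula of $\mathrm{BA}$ in $r_i$ variables, hence satisfiable in the free Boolean algebra on $r_i$ generators, which is $\mathcal{P}(2^{r_i})$; taking $m'\ge 2^{r_i}$ and sending each variable of $\rho_i$ to the matching union of atoms realizes $\rho_i$ by $\chi$--definable sets. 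Passing to an elementary submodel $\M\preceq\U$ containing the finitely many parameters gives $\mathcal{P}(\M^x)\models\rho_i(\dots)$, so $\chi$ admits every $\rho_i$, and with $\alpha=\mathrm{id}$ we conclude that $\chi$ admits $1$.

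The main obstacle is confined to two delicate points. In (1) it is the preservation of union--closure: an arbitrary Boolean combination of the $S_i$ would ruin it, and only the specific choice $\psi=\fhi\wedge\bigwedge_s\fhi(\,\cdot\,;y_s)$ — conjunction with a \emph{fixed} $(n-1)$--fold intersection — keeps every union expressible by one instance, via distributivity. In (2) it is that Cooper's notion of ``admits'' evaluates q.f.\ formulas in $\mathcal{P}(\M^x)$ with its literal top element, so a consistent $\rho_i$ may demand that finitely many definable sets \emph{cover} the model; a configuration coming from $1^{(1)}$ need not cover $\U^x$, which is exactly why I pass to the toggled $\chi$ so that complements, and hence a covering partition, become definable. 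Both fixes are elementary once identified, so the content of the Fact is really the identification of these two tricks.
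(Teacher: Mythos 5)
The paper does not actually prove this statement: it is quoted as a Fact with a citation to Cooper, so there is no internal proof to compare against, and your argument has to stand on its own. It does. Part (1) --- relativizing the $1^{(n)}$ witnesses to the fixed $(n-1)$-fold intersection $P$ via $\psi(x;y,y_1,\dots,y_{n-1})=\fhi(x;y)\wedge\bigwedge_{s}\fhi(x;y_s)$ --- correctly turns ``intersections of size $>n$ are empty, of size $\leq n$ nonempty'' into pairwise disjointness of nonempty sets, and union-closure survives by distributivity exactly as you say. Part (2)'s toggled formula $\chi$ is the same device the paper itself uses in Proposition \ref{Fully-complete-patt_are_exhibitable}, and your diagnosis of why it is needed is the right key point: Cooper's semantics evaluates quantifier-free $\mathrm{BA}$-formulas in $\mathcal{P}(\M^x)$ with its genuine top element, so a consistent $\rho_i$ may force a covering family, which raw $1^{(1)}$ witnesses need not provide. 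The one justification you should tighten is ``consistent, hence satisfiable in the free Boolean algebra on $r_i$ generators'': this does not follow from freeness, since free algebras only supply homomorphisms \emph{out}, and homomorphisms need not preserve disequations. The correct chain is: a consistent q.f.\ formula in $r_i$ variables is satisfied in the subalgebra generated by any witnesses, which is a finite Boolean algebra with at most $2^{r_i}$ atoms, hence isomorphic to $\mathcal{P}(k)$ for some $k\leq 2^{r_i}$, and $\mathcal{P}(k)$ embeds unitally into $\mathcal{P}(2^{r_i})$; quantifier-free formulas transfer along such embeddings. With that repair, satisfiability of $\rho_i$ in your $2^{r_i}$-atom subalgebra holds, each variable is assigned a single $\chi$-instance as Cooper's definition of ``admits'' requires, and the remaining steps (transfer of the q.f.\ $\mathrm{BA}$-statement about definable sets to a model, and taking $\alpha=\mathrm{id}$) go through.
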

\begin{rem}
    The above fact shows that, at the level of theories, admitting the property $1^{(n)}$ implies admitting the property $1^{(1)}$ which, in turn, implies admitting the property $1$. Moreover, since each $1^{(n)}$ can be described as a property of formulas and each property of formulas is implied by $1$, we have that, at the level of theories, each $1^{(n)}$ is a different characterization of $1$.
\end{rem}
\begin{defn}
    A complete theory $T$ is \emph{Cooper maximal} if there is a formula which admits $1$ (equivalently $1^{(n)}$) in $T$. 
\end{defn}
We are now ready to prove that Cooper's maximality coincides with straight up maximality.

\begin{prop}\label{Coopermax_IFF_SM}
    Let $T$ be a complete theory. $T$ is Cooper maximal if and only if $T$ is $\SM$. 
\end{prop}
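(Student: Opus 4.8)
The plan is to work through the characterization already established, namely that a formula is $\SM$ if and only if it exhibits every fully complete pattern (Proposition \ref{fcp-shelahSM}, together with the identification of Shelah-$\SM$ and $\SM$). Recall that exhibiting a fully complete $n$-pattern is exactly the statement that, for the $n$ sets $\fhi(\U,b_0),\dots,\fhi(\U,b_{n-1})$, the nonempty cells of the associated Venn diagram are precisely those prescribed by a nonempty family $\mathscr{F}\subseteq 2^n$. Thus $\SM$ says ``every Venn diagram is realizable,'' which is visibly the intuition behind Cooper's property. I would also reduce Cooper maximality to admitting $1^{(1)}$, using the remark following Fact \ref{1^n_prop}, that at the level of theories $1^{(1)}$ is equivalent to $1$.

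For the direction $\SM\Rightarrow$ Cooper maximal, let $\fhi$ be an $\SM$ formula and fix $m$. I would first observe that the configuration required by $1^{(1)}$ at size $m$ --- namely $m$ pairwise disjoint nonempty sets together with all of their unions, all defined by one formula --- is realized by $\psi(x;y)=\mathrm{At}(x)\wedge x\le y$ in the theory of infinite atomic Boolean algebras (take the $b_i$ to be distinct atoms and $b_X=\bigcup_{i\in X}b_i$), exactly as in the remark after Proposition \ref{Fully-complete-patt_are_exhibitable}. The full Venn diagram over the $m+2^m$ parameters $(b_i)_{i<m},(b_X)_{X\subseteq m}$ determines a fully complete pattern $P_m$, which is exhibitable since $\psi$ exhibits it. As $\fhi$ is $\SM$ it exhibits $P_m$, and reproducing the same Venn diagram forces the set-equalities $\fhi(\U,b_X')=\bigcup_{i\in X}\fhi(\U,b_i')$ and the disjointness and nonemptiness clauses of $1^{(1)}$. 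Since $m$ was arbitrary, $\fhi$ admits $1^{(1)}$, so $T$ is Cooper maximal.

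For the converse, let $\psi(x;y)$ admit $1^{(1)}$ and set $\chi(x;y,w_0,w_1)=(\psi(x;y)\wedge w_0=w_1)\vee(\neg\psi(x;y)\wedge w_0\neq w_1)$, the same device used in the Boolean-algebra example. Then a single $\chi$-instance can define either $\psi(\U,b)$ (take $w_0=w_1$) or its complement $\U^x\setminus\psi(\U,b)$ (take $w_0\neq w_1$). To exhibit a fully complete $n$-pattern with nonempty cells $\mathscr{F}=\{X_0,\dots,X_{k-1}\}\subseteq 2^n$, I would take, from $1^{(1)}$ with $m\ge k$, disjoint nonempty atoms $U_0,\dots,U_{k-1}$ and assign $U_j$ to the cell $X_j$; clause (i) of $1^{(1)}$ makes every union of these atoms $\psi$-definable. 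Setting $S_t=\bigcup\{U_j:t\in X_j\}$ yields a family whose nonempty cells are $\mathscr{F}\cup\{\emptyset\}$, the cell $\emptyset$ arising from points outside $W=\bigcup_{j<k}U_j$. If $\emptyset\in\mathscr{F}$ this is already $\mathscr{F}$; otherwise I would absorb the outside points into a fixed target cell $X_{j_0}\in\mathscr{F}$ by replacing, for each $t\in X_{j_0}$, the set $S_t$ with $\U^x\setminus\bigcup\{U_j:t\notin X_j\}$, a complement of a $\psi$-definable union and hence $\chi$-definable. A short check of cells shows the realized Venn diagram is exactly $\mathscr{F}$, so $\chi$ exhibits the pattern; as $n$ and $\mathscr{F}$ were arbitrary, $\chi$ is $\SM$.

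The main obstacle is the emptiness of the $\emptyset$-cell in the converse: the atoms supplied by $1^{(1)}$ never cover the monster, so a naive union-of-atoms construction always leaves the outside cell nonempty, and one cannot realize a pattern with $(\emptyset,n)\in\I$ without a mechanism for complementation. Passing from $\psi$ to $\chi$ --- and using that $1^{(1)}$ provides an empty instance $\psi(\U,b_\emptyset)=\emptyset$, so that $\U^x$ itself is $\chi$-definable --- is exactly what resolves this. A secondary point requiring care in the forward direction is the verification that exhibiting the single pattern $P_m$ genuinely recovers the set-equalities in clause (i) of $1^{(1)}$, and not merely the pattern of consistencies; this holds because those equalities are themselves statements about which Venn cells are empty, and a fully complete pattern fixes all of them.
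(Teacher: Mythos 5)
Your proof is correct, and it splits into one direction that matches the paper and one that genuinely does not. For $\SM \Rightarrow$ Cooper maximal, you and the paper do essentially the same thing: extract $1^{(1)}$ by feeding the $\SM$ formula a single fully complete pattern per size. The paper writes this pattern down explicitly on $2^n$ parameters, taking $\C = \{\uparrow\{i\} \: i<n\}$ and $\I$ its complement, with exhibitability supplied by Proposition \ref{Fully-complete-patt_are_exhibitable}; you instead read off a (slightly redundant, $(m+2^m)$-parameter) pattern from the atomic Boolean algebra realization, and your closing observation that the union-equalities of clause (i) of $1^{(1)}$ are themselves statements about emptiness of Venn cells is exactly the verification the paper carries out. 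The converse is where you diverge. The paper constructs nothing: it expresses $\SM$ itself as a property of formulas in Cooper's sense (a sequence $\rho_m$ of quantifier-free Boolean-algebra conditions recording, for every exhibitable $n$-pattern with $n<m$, which cells must be $0$ and which must not), and then quotes Cooper's theorem that property $1$ implies every property of formulas. You instead start from $1^{(1)}$ (legitimate, since the paper's definition of Cooper maximality already incorporates the equivalence with $1^{(1)}$ via Fact \ref{1^n_prop}) and hand-build an $\SM$ formula $\chi = (\psi \land w_0 = w_1) \lor (\neg\psi \land w_0 \neq w_1)$, verifying directly that it exhibits every fully complete pattern. The paper's route is shorter and yields the independently interesting fact that $\SM$ is a Cooper property of formulas; your route is self-contained, avoids the black-box maximality of property $1$ (using only the $1$-versus-$1^{(1)}$ equivalence), produces an explicit $\SM$ formula from a $1^{(1)}$ formula, and isolates exactly where complementation is needed, namely the patterns whose family $\mathscr{F}$ of consistent cells omits $\emptyset$. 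One cosmetic point: when no points lie outside $W=\bigcup_j U_j$, your intermediate family realizes $\mathscr{F}$ rather than $\mathscr{F}\cup\{\emptyset\}$, and when $\emptyset\in\mathscr{F}$ that cell is already realized by its own $U_j$; in both situations your construction goes through unchanged, so this affects the prose, not the proof.
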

\begin{proof}
    Assume $T$ is $\SM$ and let $\fhi(x;y)$ be the $\SM$ formula witnessing it. 

    \begin{claim}
        for each $n<\omega$ there are $(b_i)_{i<n}$ such that 
        \[\fhi(\U,b_i)\cap \fhi(\U,b_j) = \emptyset \text{ for all }i\neq j < n\]
        for each $X \subseteq n$, there is $b_X$ such that 
        \[\fhi(\U,b_X) = \bigcup_{i \in X}\fhi(\U,b_i) \]
        Equivalently, $\fhi(x;y)$ admits $1^{(1)}$. 
    \end{claim}
\begin{claimproof}
        Since $\fhi(x;y)$ is $\SM$, it realizes every fully complete $n$-patterns and these patterns can be described by pairs $(C,I)$ where $C,I \subseteq \mathcal{P}(n)$ with $C \cap I = \emptyset$. Let $n< \omega$ and consider the following $2^n$-pattern:
\[C = \{\uparrow \{i\} \: i<n\}\]
\[I = \mathcal{P}(2^n) \setminus C\]
where $\uparrow\{i\} = \{X \subseteq n \: i \in X\}$. 
Clearly $I \cap C = \emptyset$.
Let $(b_X)_{X \subseteq n}$ be the parameters given by $\SM$ with this pattern.
First notice that $\fhi(\U,b_{\{i\}}) \cap \fhi(\U,b_{\{j\}}) = \emptyset$. Indeed, suppose $a \models \fhi(x,b_{\{i\}}) \land \fhi(x,b_{\{j\}})$. Let $A = \{\{k\} \subseteq n \: a \models \fhi(x,b_{\{k\}}) \}$. Since $A \in I$ we have that the $\fhi$-type of $a$ over $(b_X)_{X \subseteq n}$ is inconsistent; contradiction.

Let $X \subseteq n$. We want to show that 
\[\fhi(\U,b_X) = \bigcup_{i \in X} \fhi(\U,b_{\{i\}})\]
Let $a \models \fhi(x,b_{\{i\}})$ for some $i \in X$. Since $\mathcal{P}(n)\setminus X \in I$, it is inconsistent to be in $\fhi(\U,b_{\{i\}})$ but not in $\fhi(\U,b_X)$; hence $a \in \fhi(\U,b_X)$.
If $a \models \fhi(x,b_X)$. Since $\{X\} \in I$, it is inconsistent to be in $\fhi(\U,b_X)$ but not in all the $\fhi(x,b_{\{i\}})$, hence there is $i \in X$ such that $a \in \fhi(\U,b_{\{i\}})$.
\end{claimproof}
    
The claim shows that, if $\fhi(x;y)$ is $\SM$, then it admits $1^{(1)}$. By Fact \ref{1^n_prop}, there is a formula admitting $1$ and thus $T$ is Cooper maximal.

Assume now that $T$ is Cooper maximal and let $\fhi(x;y)$ admitting $1$. Since $1$ is the strongest property of formulas, it is enough to prove that $\SM$ can be described as a property of formulas. Indeed $\SM$ is a property of formulas described by: \[\rho_m := \bigwedge_{n<m} \bigwedge_{\substack{(\C,\I) \\ \text{ exhibitable $n$-p}}}( \bigwedge_{(Y^+,Y^-) \in \C}(\bigcap_{i\in Y^+}x_i \cap \bigcap_{j\in Y^{-}} (n \setminus x_j)) \neq 0 )\]\[\land \bigwedge_{(Z^+,Z^-) \in \I} (\bigcap_{i\in Z^+}x_i \cap \bigcap_{j\in Z^{-}} (n \setminus x_j)) = 0 )\]
This shows that, if $T$ is Cooper maximal, then it is $\SM$.
\end{proof}
\begin{rem}
    We want to point out that the above equivalence is only at the level of theories. Indeed, while a formula admitting $1$ is $\SM$, if a formula is $\SM$, it admits $1^{(1)}$ and thus (by Fact \ref{1^n_prop}) there is a (possibly different) formula admitting $1$. 
\end{rem}
\subsection{Examples of $\SM$ theories}
\begin{eg}[Infinite atomic Boolean algebra]
    In Proposition \ref{Fully-complete-patt_are_exhibitable}, we showed that the theory $T$ of infinite atomic Boolean algebras is $\SM$ by showing a formula that exhibits every exhibitable pattern. We now prove again this result using Cooper's characterization of $\SM$ (See Fact \ref{1^n_prop} and Proposition \ref{Coopermax_IFF_SM}): It is enough to find a formula with $1^{(1)}$ i.e.\ a formula $\fhi(x;y)$ such that, for any $n<\omega$ there are $(b_i)_{i<n}$ such that 
\begin{itemize}
    \item $\fhi(\U;b_i) \cap \fhi(\U;b_j) = \emptyset$ for all $i<j<n$, and 
    \item for each $X \subseteq n$, there is $b_X$ such that 
    \[\fhi(\U;b_X) = \bigcup_{i \in X} \fhi(\U;b_i)\]
\end{itemize}
Work in some model $\B$.
Let $\mathrm{At}(x)$ be the formula saying that $x$ is an atom and consider the formula 
\[\fhi(x;y) = \mathrm{At}(x) \land x \leq y\]
Fix $n<\omega$ and choose $(b_i)_{i<n}$ such that $\mathrm{At}(b_i)$ and $b_i \neq b_j$ for $i<j<n$. Now $\fhi (x;b_i) \land \fhi(x;b_j)$ is inconsistent for $i<j<n$. Moreover, if $X \subseteq n$, let  $b_X := \bigcup_{i \in X} b_i$; then 
\[\fhi(\B;b_X) = \bigcup\{b_i : b_i \leq b_X \land i<n\} = \bigcup_{i \in X} \fhi(\B;b_i)\]
In conclusion $\fhi(x;y)$ has $1^{(1)}$ and thus $T$ is $\SM$. 

\end{eg}
\begin{noneg}[Atomless Boolean algebras]
\label{Atomless-not-SM}
Consider the theory $T$ of atomless Boolean algebras in the language $\L = \{\cup,\cap,\neg,0,1\}$.
In \cite{cooper}, Cooper proved that $T$ does not admit $1$ and thus, by Proposition \ref{Coopermax_IFF_SM}, it is not $\SM$. 

In the next section we will show that, in $T$, the formula $\fhi(x;y) = 0 \neq x \leq y$ exhibits every pattern $(\C,\I)$ in which, for every $(X^+,X^-) \in \C \cup \I$, $X^- = \emptyset$ (see Definition \ref{positive-patt-defn} and Remark \ref{reas+pos->exhib}). Thus, $\fhi(x;y)$ is an example of what, in the next section, we will call a \emph{positive maximal formula} ($\PM$ formula, see Definition \ref{PM-defn}).
Although being $\PM$, $\fhi(x;y)$ does not exhibit every exhibitable pattern. For example, consider the $3$-pattern $(\C,\I)$ defined by 
\[\C = \{(\{0\},\emptyset),(\{1\},\emptyset)\}\]
\[\I = \{(\{0,1\},\emptyset),(\{0\},\{2\}),(\{1\},\{2\}),(\{2\},\{0,1\})\}\]
This pattern describes three sets $A_0,A_1,A_2$ with the conditions that $A_0 \neq \emptyset \neq A_1$, $A_0 \cap A_1 = \emptyset$ and $A_2 = A_0 \cup A_1$. 
This pattern is clearly exhibitable (in any theory with models of size at least $2$, by the formula $x = y_0 \lor x = y_1$).
On the other hand, it is easy to see that the formula $\fhi(x;y) = 0\neq x \leq y$ does not exhibit it in $T$.
We point out that this does not prove that $T$ is not $\SM$; it just shows that $\fhi(x;y)$ is not $\SM$. 
\end{noneg}

\begin{eg}[Skolem arithmetic]
Skolem arithmetic is the first-order theory of the structure $(\mathbb N,\cdot)$. First of all, we observe that we can define the number $1$, the divisibility relation and the set of prime numbers: 
\begin{itemize}
    \item $\mathrm{One}(x) = \forall y (x \cdot y = y)$.
    \item $x|y = \exists z (y = x \cdot z)$
    \item $P(x) = \neg \mathrm{One}(x) \land \forall y (y|x \to (\mathrm{One}(y) \lor y=x))$
\end{itemize}
To show that this theory is $\SM$ we use Cooper's characterization of $\SM$ showing a formula $\fhi(x;y)$ with $1^{(1)}$.
Define $\fhi(x;y) = P(x) \land x|y$ and enumerate the prime numbers $P(\mathbb N) = \{p_i \: i<\omega\}$. Fix $n< \omega$ and for each $i<n$ let $b_i = p_i$. Now, for $i<j<n$, $\fhi(\mathbb N,b_i) \cap \fhi(\mathbb N,b_j) = \{p_i\} \cap \{p_j\} = \emptyset$. Moreover, given $X \subseteq n$, take $b_X = \prod_{i \in X}p_i$. We have 
\[\fhi(\mathbb N,b_X) = \{p_i \: i \in X\} = \bigcup_{i \in X}\{p_i\} = \bigcup_{i \in X}\fhi(\mathbb N,b_i)\]
In conclusion, Skolem arithmetic is $\SM$.

A similar argument shows that the first-order theory of $(\mathbb Z,+, \cdot,0,1)$ is $\SM$. 
\end{eg}
Since all the previous examples of $\SM$ theories are not $\omega$-categorical, we now construct an $\omega$-categorical $\SM$ theory. 

\begin{eg}[$\omega$-categorical $\SM$ theory] Consider the complete theory of the two-sorted structure $(2^\omega,\mathrm{CLOP}(2^\omega))$ in the language of Boolean algebras for the second sort and a binary relational symbol interpreted as the membership relation between a point of the Cantor space and one of its clopen sets.

In order to prove $\omega$-categoricity, we present this example as the complete theory of a Fra\"issé limit of a class of finite models of a universal theory $T_0$, coding the two sorts with unary predicates. 
Consider the language $\L = \{S,B,R\}\cup \L_{\mathrm{BA}}$ where $\L_{\mathrm{BA}} = \{\cap, \cup, (-)^c,0,1\}$ is the language of Boolean algebras, $S$ and $B$ are unary predicates and $R$ is a binary relation. 
For each $\L_{\mathrm{BA}}$ term $t(\Bar{y})$ define an $\L$-formula $\psi_t(x,\Bar{y})$ as follows:
\begin{itemize}
    \item If $t$ is a variable $y$ then  $\psi_t(x,y) = R(x,y)$.
    \item if $t$ is $1$ then $\psi_t(x)$ is $x=x$. 
    \item if $t_1(\Bar{y})$,$t_2(\Bar{y}')$ are terms, then $\psi_{t_1 \cap t_2}(x,\Bar{y}\Bar{y}') = \psi_{t_1}(x,\Bar{y}) \land \psi_{t_2}(x,\Bar{y}')$.
    \item If $t(\Bar{y}) = (t'(\Bar{y}))^c$ then $\psi_{t}(x,\Bar{y}) = \neg \psi_{t'}(x,\Bar{y})$. 
\end{itemize}
Since $0$ and $\cup$ are definable from the other symbols, we can assume that the term $t$ does not contain them. 

Notice that, given an atomic $\psi \in \L_{\mathrm{BA}}$, $\psi$ is of the form $t_1(\Bar{y}) = t_2(\Bar{y}')$ for some $\L_{\mathrm{BA}}$-terms.

Consider the following $\L$-theory $T_0$: 
\begin{itemize}
    \item[($A_1$)\quad \ ] $S$ and $B$ partition the universe.
    \item[($A_2$)\quad\ ] $R \subseteq S \times B$.
    \item[($A_3$)\quad\ ] $B \models T_{BA}$ i.e.\ $B$ is a Boolean algebra.
    \item[($A_4^{t_1,t_2}$)] For $\L_{BA}$-terms $t_1(\bar{y})$ and $t_2(\bar{y}')$, we have \[ (\forall \Bar{y}\bar{y}' \in B)( t_1(\bar{y}) = t_2(\bar{y}') \rightarrow (\forall x \in S)(\psi_{t_1}(x,\bar{y})\leftrightarrow \psi_{t_2}(x,\bar{y}')))\]
\end{itemize}
We claim that the class $\K := Mod_{<\omega}(T_0)$ (i.e. the class of finite models of $T_0$) is a Fra\"issé class and the theory of its limit is $\SM$.
Intuitively, each structure in $\K$ can be described as follows: we have a set $S$, a Boolean algebra $B$ and a relation $R$ between the two disjoint subsets. The axiom scheme $(A_4^{t_1,t_2})$ makes sure that we have a homomorphism of Boolean algebras
\[h\colon B \to \mathcal{P}(S)\]
\[b \mapsto R(S,b)\]
Thus, we have a realization of a homomorphic image of $B$ as $R$-definable (over $B$) subsets of $S$. 

To show that $\K$ is a Fra\"issé class, we need to show that $\K$ has $\mathrm{HP}$, $\mathrm{JEP}$, $\mathrm{AP}$ and it is \emph{uniformly locally finite} i.e.\ there exists $f\colon \omega \to \omega$ that bounds the size of structures in $\K$ with respect to the number of their generators.
First of all, since $T_0$ is universal, we have that $\K$ is closed under substructures (i.e.\ $\K$ has $\mathrm{HP}$). Moreover, since the structure $(\emptyset,\{0,1\})$ is an initial object in $\K$ (i.e.\ it embeds uniquely in every structure in $\K$), we have that $\mathrm{JEP}$ is implied by $\mathrm{AP}$.   

Consider the function $f \colon \omega \to \omega$ defined as $f(n) = n+2^{2^n}$. Now, given $C \leq A \in \K$ with $C = \langle C_{gen}\rangle$, $C_{gen} = \{s_0,...,s_{l-1},b_0,...,b_{k-1}\}$ and $l+k=n$, it's easy to see that $|C| \leq l + 2^{2^k} \leq f(n)$. Thus $\K$ is uniformly locally finite. 
It remains to show that $\K$ has $\mathrm{AP}$.

\begin{lem}
    Given $A,A' \in \K$, there is an embedding $A \to A'$ if and only if there is an embedding of Boolean algebras $B(A) \to B(A')$ and a surjective homomorphism of Boolean algebras $\mathcal{P}(S(A')) \to \mathcal{P}(S(A))$ such that the following commutes
    \begin{center}
    \begin{tikzcd}
        B(A) \arrow[r, "h_A"] \arrow[d,"emb"]
         &\mathcal{P}(S(A)) \\
        B(A') \arrow[r, "h_{A'}"]
        & \mathcal{P}(S(A')) \arrow[two heads, u, "hom"]
\end{tikzcd}
\end{center}
where $h_A$ and $h_{A'}$ are the Boolean algebra homomorphisms given by $b \mapsto R(S,b)$. 
\end{lem}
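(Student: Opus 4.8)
The plan is to prove both directions through the elementary duality between maps of finite point sets and homomorphisms of their (finite) power set algebras. Throughout, I write $h_A\colon B(A) \to \mathcal{P}(S(A))$ for the map $b \mapsto R(S(A),b) = \{s \in S(A) \: A \models R(s,b)\}$, which is a Boolean algebra homomorphism by the axiom scheme $(A_4^{t_1,t_2})$, and likewise $h_{A'}$ for $A'$. The key structural fact I will lean on is that, since $A,A' \in \K$ are finite, $S(A)$ and $S(A')$ are finite, so every homomorphism $\mathcal{P}(S(A')) \to \mathcal{P}(S(A))$ is dual to a function $S(A) \to S(A')$.

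For the forward direction, suppose $e\colon A \to A'$ is an $\L$-embedding. Since $e$ preserves the predicates $B$ and $S$, it restricts to a Boolean algebra embedding $e_B\colon B(A) \to B(A')$ and to an injection $e_S\colon S(A) \to S(A')$. I take $emb := e_B$ and let $hom := e_S^{-1}\colon \mathcal{P}(S(A')) \to \mathcal{P}(S(A))$ be the preimage map $X' \mapsto \{s \in S(A) \: e_S(s) \in X'\}$; this is a Boolean algebra homomorphism because preimages commute with the Boolean operations, and it is surjective because $e_S$ is injective (so $e_S^{-1}(e_S(Y)) = Y$ for every $Y \subseteq S(A)$). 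It then remains to check the square commutes: for $b \in B(A)$, using that $e$ preserves and reflects $R$, I compute $hom(h_{A'}(e_B(b))) = \{s \in S(A) \: A' \models R(e_S(s), e_B(b))\} = \{s \in S(A) \: A \models R(s,b)\} = h_A(b)$.

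For the backward direction, suppose I am given a Boolean algebra embedding $\iota\colon B(A) \to B(A')$ and a surjective Boolean algebra homomorphism $\pi\colon \mathcal{P}(S(A')) \to \mathcal{P}(S(A))$ making the square commute. First I recover a map of point sets: by duality of finite Boolean algebras, $\pi = g^{-1}$ for a unique $g\colon S(A) \to S(A')$; concretely, the sets $\pi(\{s'\})$ for $s' \in S(A')$ form a partition of $S(A)$ (as $\pi$ sends the atoms $\{s'\}$ to pairwise disjoint sets whose union is the top), and $g(s)$ is the unique $s'$ with $s \in \pi(\{s'\})$. Surjectivity of $\pi$ forces $g$ to be injective. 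I then set $e := g \sqcup \iota$ on $S(A) \sqcup B(A)$; it is injective and preserves the predicates $S,B$ and the Boolean algebra structure by construction. The only nontrivial point is that $e$ preserves and reflects $R$, and this is exactly where the commuting square enters: since $\pi = g^{-1}$, commutativity gives $h_A(b) = \pi(h_{A'}(\iota(b))) = \{s \in S(A) \: g(s) \in h_{A'}(\iota(b))\}$, so $A \models R(s,b)$ iff $g(s) \in h_{A'}(\iota(b))$ iff $A' \models R(g(s), \iota(b))$, as required.

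I expect the main obstacle to be the backward direction, and within it the step reconstructing the injection $g$ on points from the abstract surjective homomorphism $\pi$: one must verify carefully that a surjective homomorphism between finite power set algebras is genuinely dual to an injection of the underlying sets, and that the $g$ produced this way, together with $\iota$, is forced by the commuting square to respect $R$ on the nose. The forward direction, and all the bookkeeping about preservation of $S$, $B$, and the Boolean operations, should be routine.
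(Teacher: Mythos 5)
Your proof is correct and follows essentially the same route as the paper's: the forward direction restricts the embedding to the two sorts and takes the induced surjection on power sets (your preimage map $e_S^{-1}$ is exactly the paper's $X \mapsto X \cap S(A)$ under the identification of $S(A)$ with its image), and the backward direction recovers the point injection from $\pi$ by finite Stone duality and reads off preservation and reflection of $R$ from the commuting square. The only difference is presentational: where the paper simply cites Stone duality, you verify the finite duality by hand via the partition $\{\pi(\{s'\})\}_{s' \in S(A')}$, which is a welcome amount of extra detail but not a different argument.
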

\begin{proof}
    Suppose we have an embedding $A \to A'$. In particular we have a embedding of Boolean algebras $F \colon B(A) \to B(A')$ and an inclusion $S(A) \subseteq S(A')$ such that for every $s \in S(A)$, $R(s,b)$ if and only if $R(s,F(b))$. By taking $X \mapsto X \cap S(A)$, we have an surjective homomorphism of Boolean algebras $\mathcal{P}(S(A')) \twoheadrightarrow \mathcal{P}(S(A))$. Therefore, given $b \in B(A)$, we have
    \[h_{A'}(F(b)) \cap S(A) = R(S(A'),F(b)) \cap S(A) = R(S(A),b) = h_A(b)\]
    Conversely, given the above diagram, we have a Boolean algebra embedding $B(A) \to B(A')$ and by Stone duality, we get an injection $S(A) \to S(A')$. Commutativity of the diagram implies that the relation $R$ is preserved by this pair of maps and thus we have an embedding of $\L$-structures $A \to A'$.  
\end{proof}
\begin{rem}
    Using the proof of the above lemma, one could show an equivalence between the category $\K$ with $\L$-embeddings and the \emph{comma category} of the functors \[(\mathrm{FinBA},\mathrm{BA}\text{-}\mathrm{emb}) \xrightarrow[]{\mathrm{id}}(\mathrm{FinBA},\mathrm{BA}\text{-}\mathrm{hom}) \xleftarrow[]{\P} (\mathrm{FinSet}^{\mathrm{op}},\mathrm{injections})\]
    In particular, the data of an embedding of Boolean algebras $B(A) \to B(A')$ together with a surjective Boolean algebra homomorphism $\P(S(A')) \to \P(S(A)) $, not only guarantees the existence of an $\L$-embedding, but it characterizes such embedding. 
\end{rem}

\begin{prop}
$\K$ has $\mathrm{AP}$. 
\end{prop}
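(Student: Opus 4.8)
The plan is to read off the amalgamation from the characterization of embeddings in the previous lemma, working in the dual picture sketched in the remark. By finite Stone duality, a structure $A \in \K$ is determined by the finite set $S(A)$, the finite set $\mathrm{At}(B(A))$ of atoms of $B(A)$, and the function $\phi_A \colon S(A) \to \mathrm{At}(B(A))$ dual to the homomorphism $h_A$, so that $R(s,b)$ holds iff $\phi_A(s) \leq b$. Under this dictionary the lemma says that an embedding $A \to A'$ is exactly the data of an injection $j \colon S(A) \hookrightarrow S(A')$ and a surjection $q \colon \mathrm{At}(B(A')) \twoheadrightarrow \mathrm{At}(B(A))$ satisfying $q \circ \phi_{A'} \circ j = \phi_A$ (the injection corresponds to the surjective homomorphism $\mathcal{P}(S(A')) \twoheadrightarrow \mathcal{P}(S(A))$, and the surjection of atoms corresponds to the Boolean-algebra embedding $B(A) \hookrightarrow B(A')$).

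So suppose embeddings are given by $(j_1,q_1)\colon A \to A_1$ and $(j_2,q_2)\colon A \to A_2$. I would build $A_3$ by pushing out the point sets and pulling back the atom sets: take $S(A_3) = S(A_1) \sqcup_{S(A)} S(A_2)$ with canonical maps $k_1,k_2$, and $\mathrm{At}(B(A_3)) = \mathrm{At}(B(A_1)) \times_{\mathrm{At}(B(A))} \mathrm{At}(B(A_2))$ with projections $r_1,r_2$, setting $B(A_3) = \mathcal{P}(\mathrm{At}(B(A_3)))$. In $\mathbf{Set}$ the pushout of injections is injective, so $k_1,k_2$ are injections; since $q_1,q_2$ are surjective the projections $r_1,r_2$ are surjective, hence dualize to Boolean-algebra embeddings $B(A_i) \hookrightarrow B(A_3)$. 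Moreover $k_1 j_1 = k_2 j_2$ and $q_1 r_1 = q_2 r_2$ by the universal properties, so the two composites $A \to A_i \to A_3$ will coincide and the amalgamation square will commute.

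The crux is defining the labeling $\phi_3 \colon S(A_3) \to \mathrm{At}(B(A_3))$ so that it recovers $\phi_1$ and $\phi_2$ under the projections. On a shared point $k_1 j_1(s) = k_2 j_2(s)$ I set $\phi_3 = (\phi_1 j_1(s), \phi_2 j_2(s))$; this lands in the fibre product precisely because $q_1 \phi_1 j_1(s) = \phi_A(s) = q_2 \phi_2 j_2(s)$. On a point $k_1(s_1)$ with $s_1 \notin j_1(S(A))$ I set $\phi_3$ to have first coordinate $\phi_1(s_1)$ and second coordinate any atom of $B(A_2)$ lying over $q_1(\phi_1(s_1))$, which exists by surjectivity of $q_2$; symmetrically on the $A_2$ side. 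By construction $r_i \circ \phi_3 \circ k_i = \phi_i$, so $(k_1,r_1)$ and $(k_2,r_2)$ are embeddings $A_1 \to A_3$ and $A_2 \to A_3$ agreeing on $A$.

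Finally I would check $A_3 \in \K$: both $S(A_3)$ and $B(A_3)$ are finite, $B(A_3)$ is a Boolean algebra, and $R_3(s,b) \iff \phi_3(s) \leq b$ makes $b \mapsto R_3(S(A_3),b)$ a genuine Boolean-algebra homomorphism, which is exactly the content of the axiom scheme $(A_4^{t_1,t_2})$; axioms $(A_1)$--$(A_3)$ are immediate. The main obstacle is the consistency of $\phi_3$ on the overlap together with verifying the commuting squares: surjectivity of $q_1,q_2$ is what lets the labeling extend to the non-shared points, while the pullback condition is what forces the shared points to be labeled compatibly. A minor preliminary is to arrange $S(A_1)\cap S(A_2)=S(A)$ and likewise for the algebras (or to argue with the abstract pushout/pullback so that these identifications are automatic); everything else is bookkeeping.
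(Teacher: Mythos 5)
Your proposal is correct and follows essentially the same route as the paper's proof: both pass through the lemma into the finite Stone dual picture, amalgamate by taking the pushout of the point sets and the pullback of the atom sets, and define the connecting map (your $\phi_3$, the paper's $h$) by pairing on the shared points and using surjectivity of $q_1,q_2$ to extend over the non-shared points. The only cosmetic difference is that you package the homomorphism $h_A$ as a labeling function $S(A)\to \mathrm{At}(B(A))$ from the start, whereas the paper carries the commutative diagrams of Boolean algebras and power sets and dualizes midway; the mathematical content is identical.
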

\begin{proof}
Consider the following amalgamation problem in $\K$: 
\begin{center}
  \begin{tikzcd}[row sep=tiny]
& C  \\
A \arrow[ur] \arrow[dr] & \\
& C'
\end{tikzcd}
\end{center}
By our previous lemma, this translates to the following diagram with commutative squares.

\begin{center}
  \begin{tikzcd}
& B(C) \arrow[r] & \mathcal{P}(S(C)) \arrow[two heads, dl]  \\
B(A) \arrow[hook,ur] \arrow[hook,dr] \arrow[r] &\mathcal{P}(S(A)) & \\
& B(C') \arrow[r] & \mathcal{P}(S(C')) \arrow[two heads, ul]
\end{tikzcd}
\end{center}
Applying Stone duality, we get the following diagram in the category of finite sets, maintaining commutativity.
\begin{center}
  \begin{tikzcd}
&S_1 \arrow[dl,two heads,"g_1"] &\arrow[l,"f_1"] S(C) \\
S_0 &\arrow[l] S(A) \arrow[ur,hook,"i"] \arrow[dr,hook,"i'"]\\
& S_2 \arrow[ul, two heads,"g_2"] &\arrow[l,"f_2"] S(C')
\end{tikzcd}
\end{center}
Performing the pullback and pushout (in the category of finite sets) we get
\begin{center}
  \begin{tikzcd}
&S_1 \arrow[dl,two heads,"g_1"] &\arrow[l,"f_1"] S(C) \arrow[d,hook,"j"]\\
S_0 & S_1 \times_{S_0}S_2 \arrow[u,two heads,"p_1"] \arrow[d,two heads,"p_2"] & S(C)  \sqcup_{S(A)}S(C')& \arrow[ul,hook,"i"] 
 \arrow[dl,hook,"i'"] S(A)\\
 &S_2 \arrow[ul,two heads,"g_2"]& \arrow[l,"f_2"] \arrow[u,hook,"j'"]S(C') 

\end{tikzcd}
\end{center}

Now, $S_1 \times_{S_0}S_2 = \{(a,b) \in S_1 \times S_2 \: g_1(a) = g_2(b)\}$ and the functions $p_i \colon S_1 \times_{S_0} S_2 \to S_i$ are the projections. Without loss of generality, we consider $i,i',j,j'$ inclusions of sets. We want to define a function 
\[h\colon S(C)\sqcup_{S(A)}S(C') \to S_1 \times_{S_0}S_2\]
such that $f_1 = p_1 \circ h \circ j$ and $f_2 = p_2 \circ h \circ j'$. Given $x \in S(A)$, define $h(x) = (f_1(x),f_2(x))$; since the outer paths are the same, $(f_1(x),f_2(x)) \in S_1 \times_{S_0}S_2$. Now, given $x \in S(C)$, since $p_1$ is surjective, pick $b \in S_2$ such that $(f_1(x),b) \in S_1 \times_{S_0}S_2$ and define $h(x) = (f_1(x),b)$. Similarly, for $x \in S(C')$, since $p_2$ is surjective, define $h(x) = (a,f_2(x))$ for some $a \in S_1$ such that $(a,f_2(x)) \in S_1 \times_{S_0}S_2$. It is easy to check that $f_1 = p_1 \circ h \circ j$ and $f_2 = p_2 \circ h \circ j'$. 

Dually, we get a Boolean algebra $B(D) = \mathcal{P}(S_1 \times_{S_0}S_2)$ that solves the amalgamation problem $B(A) \to B(C)$ and $B(A) \to B(C')$ and a set $S(D)$ with a homomorphism $B(D) \to \mathcal{P}(S(D))$. Using the lemma and commutativity of squares, we get a solution $(B(D),S(D))$ to the initial amalgamation problem. In conclusion, $\K$ has $\mathrm{AP}$.
\end{proof}

Let $T$ be the first-order theory of the Fra\"issé limit of $\K$.
We show that $T$ is $\SM$. 
By Fact \ref{1^n_prop} and Proposition \ref{Coopermax_IFF_SM}, it is enough to show that $T$ has $1^{(1)}$.
Let $\fhi(x;y) = R(x,y)$. Fix $n<\omega$ and consider the finite model $A$ of $T_0$ given by $B(A) = \mathcal{P}(n)$ and $S(B) = n$ with $R(i,X)$ if and only if $i \in X$. Now, for each $i<n$, define $b_i = b_{\{i\}}$. Clearly, $R(A,b_i) \cap R(A,b_j) = R(n,b_i)\cap R(n,b_j) = \{i\} \cap \{j\} = \emptyset$ for all $i<j<n$. Moreover, given $X \subseteq n$, we have that 
\[R(A,b_X) = R(n,b_X) = X = \bigcup_{i \in X}\{i\} = \bigcup_{i \in X}R(A,b_i)\]
Embedding $A$ into $\M$ (by universality of $\M$), the above properties are preserved. Thus $R(x,y)$ is $\SM$ in $T$.  

We can give an explicit description of the Fra\"issé limit as follows: 
Let $\M = (S(\M),B(\M)) =  (\omega,\B)$ where $\B$ is the countable atomless Boolean algebra. By Stone duality, we know that there is an isomorphism of Boolean algebras $\B \xrightarrow{g} \mathrm{CLOP}(2^\omega)$. 
Moreover, let $f\colon \omega \to 2^\omega$ such that the image of $f$ is a countable dense subset of the Cantor space and define $R(x,y)$ as follows: 
\[\M \models R(n,b) \Iff f(n) \in g(b)\]
Note that, since $2^\omega$ is totally disconnected, for every $n \neq m$ in $\omega$ there exists $b_{n}^m \in \B$ such that $\M \models R(n,b) \land \neg R(m,b)$.
Using this, one can check that the extension properties hold in $\M$ and thus show that it is the desired Fra\"issé limit.

\end{eg}

\section{Weakenings of \texorpdfstring{$\SM$}{TEXT}} \label{secWeak}
Now that we have established the worst (in some sense) combinatorial binary property that a complete first-order theory can have, we identify subcollections of the set of exhibitable patterns which are enough to describe some of the properties of formulas previously considered. From that, we define other notions of maximality realizing the patterns in those subcollections. 
\subsection{Positive maximality}
The \emph{tree properties} defining simple, $\NTP_1$ and $\NTP_2$ theories can be described by patterns in which the negation of the formula is not involved. More precisely, we can characterize tree properties by exhibiting patterns in which, for every condition $(X^+,X^-) \in \C \cup \I$, the set $X^-$ is empty (see Proposition \ref{div-lines-as-patterns}).
This observation leads the way for the concepts and results described in this subsection. 
\begin{defn} \label{positive-patt-defn}
    Fix $n<\omega$. A \emph{positive} $n$-pattern is a pattern $(\C,\I)$ such that for every condition $(X^+,X^-) \in \C \cup \I$, we have that $X^- = \emptyset$.  
\end{defn}
We can now define the notion of \emph{positive maximality} ($\PM$) by requesting the exhibition of all reasonable (See Definition \ref{Reasonable}) positive patterns. 
\begin{defn}\label{PM-defn}
    A formula $\fhi(x;y)$ is \emph{positive maximal} ($\PM$), if for every $n<\omega$ and every reasonable \emph{positive} $n$-pattern $(\C,\I)$, the formula $\fhi$ exhibits $(\C,\I)$. 
\end{defn}
\begin{rem}\label{reas+pos->exhib}
Notice that we didn't  require (as in $\SM$) to realize every exhibitable pattern since every reasonable positive pattern is exhibitable. Let's show this by giving an example of a $\PM$ theory. 
    Let $T = \mathrm{Th}(\mathcal{B})$ where $\mathcal{B}$ is an infinite Boolean algebra with an atomless element. We claim that the formula $\fhi(x;y) = 0 \neq x \leq y$ is $\PM$.
    The existence of an atomless element implies the existence of a sequence $(a_i)_{i<\omega}$ of non-zero elements below the atomless element such that $a_i \cap a_j = 0$ for every $i<j<\omega$. Fix $n<\omega$ and fix a reasonable positive $n$-pattern $(\C,\I)$. If $\C = \emptyset$, take $b_i = 0$ for every $i<n$; since $\fhi(x;b_i) = \bot$ for every $i$, we get that every inconsistency condition will be satisfied. 
    Assume $\emptyset \neq \C = \{(Y_j,\emptyset) \: j <k\} $. For each $i<n$, define
    \[b_i = \bigcup_{i \in Y_j}a_j\]
    Fix $\Tilde{j}<k$. Then, if $i \in Y_{\Tilde{j}}$ we have that 
    \[0\neq a_{\Tilde{j}} \leq \bigcup_{i \in Y_j}a_j = b_i\]
    hence $\{\fhi(x;b_i) \: i \in Y_{\Bar{j}}\}$ is consistent.
    Fix now $(Z,\emptyset) \in \I$ and suppose, toward contradiction, that there is $a \in \mathcal{B}$ such that for every $i \in Z$, $a \models \fhi(x;b_i)$.
    This means that 
    \[0 \neq a \leq \bigcap_{i \in Z}\bigcup_{i \in Y_j}a_j = \bigcup_{\substack{f\colon Z \to k,\\ (\forall i \in Z)(i \in Y_{f(i)}) }}\bigcap_{i \in Z} a_{f(i)}\]
    If the above union is $0 \in \mathcal{B}$, then $0 \neq a \leq 0$; contradiction. 
    If it is not empty, this means that there is $f\colon Z \to k$, with for all $i\in Z, i \in Y_{f(i)}$, such that $\bigcap_{i \in Z}a_{f(i)} \neq 0$. Since all the $a_j$ are pairwise disjoint, this can only happen if $f$ is constant, hence there is a $j<k$ such that for all $i \in Z$, $a_j$ appears in the above union i.e.\ there is a $j<k$ such that for all $i\in Z$, $i \in Y_j$; thus $Z \subseteq Y_j$ contradicting reasonableness.  
    In conclusion, this theory is $\PM$ (despite being $\NSM$, see Remark \ref{Atomless-not-SM}) and hence every reasonable positive $n$-pattern is exhibitable. 
\end{rem}
\begin{rem}
 Since every reasonable positive pattern is, in particular, an exhibitable pattern, if a formula $\fhi(x;y)$ is $\SM$, then $\fhi(x;y)$ is also $\PM$.
\end{rem}
We now prove a characterization of $\PM$ at the level of formulas. This equivalence will give us a concrete subfamily of the collection of all reasonable positive patterns which is enough to exhibit positive maximality. 
\begin{prop}
\label{PMchar}
    A formula $\fhi(x;y)$ is $\PM$ if and only if, for every $n<\omega$, there is $(b_X)_{X \subseteq n}$ such that, for any family of subsets of $n$, $\emptyset \neq Z \in \mathcal{P}(\mathcal{P}(n))$, the partial $\fhi$-type
    \[\{\fhi(x;b_Y) \: Y \in Z\} \text{ is consistent if and only if }\bigcap Z \neq \emptyset\]
\end{prop}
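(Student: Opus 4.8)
The plan is to prove both implications through a single combinatorial dictionary: a partial $\fhi$-type $\{\fhi(x;b_Y) \: Y \in Z\}$ indexed by a family $Z$ of subsets of $n$ should be consistent exactly when $\bigcap Z \neq \emptyset$. The entire argument then reduces to choosing the right pattern (for the forward direction) and the right assignment of indices (for the reverse direction), arranged so that the reasonability hypothesis of Definition \ref{Reasonable} lines up precisely with the set-theoretic monotonicity of intersections.

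\emph{From $\PM$ to the characterization.} Fix $n<\omega$. I would build a single positive pattern on the $2^n$ positions indexed by the subsets of $n$: put
\[ \C = \{ Z \subseteq \mathcal{P}(n) \: Z \neq \emptyset \text{ and } \textstyle\bigcap Z \neq \emptyset \}, \qquad \I = \{ Z \subseteq \mathcal{P}(n) \: Z \neq \emptyset \text{ and } \textstyle\bigcap Z = \emptyset \}, \]
each condition taken with empty negative part, so that $(\C,\I)$ is positive in the sense of Definition \ref{positive-patt-defn}. The key point is reasonability: if $Z \subseteq Y$ as families then $\bigcap Y \subseteq \bigcap Z$, so a $Z \in \I$ (empty intersection) can never be contained in a $Y \in \C$ (nonempty intersection), giving condition (i); conditions (ii) and (iii) are automatic since all negative parts are empty. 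Applying $\PM$ to this reasonable positive $2^n$-pattern yields witnesses $(b_X)_{X \subseteq n}$, and since $\C$ and $\I$ partition all nonempty families $Z \subseteq \mathcal{P}(n)$, exhibition of $(\C,\I)$ is literally the asserted biconditional.

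\emph{From the characterization to $\PM$.} Let $(\C,\I)$ be a reasonable positive $m$-pattern, which I must exhibit. Discarding the trivial case $\C = \emptyset$ (take all witnesses equal to $b_\emptyset$ given by the characterization at $n=1$, so that each inconsistency condition becomes the inconsistent type $\{\fhi(x;b_\emptyset)\}$), enumerate $\C = \{(Y_j,\emptyset) \: j<k\}$ and set $n = k$. For each position $i<m$ define $X_i = \{ j<k \: i \in Y_j \} \subseteq n$, let $(b_X)_{X \subseteq n}$ be given by the characterization at this $n$, and put $c_i = b_{X_i}$. For any $Y \subseteq m$ one computes $\bigcap_{i \in Y} X_i = \{ j<k \: Y \subseteq Y_j \}$. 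Hence for a consistency condition $Y_{j_0} \in \C$ this set contains $j_0$, so the intersection is nonempty and $\{\fhi(x;c_i) \: i \in Y_{j_0}\}$ is consistent; for an inconsistency condition $Z \in \I$, reasonability gives $Z \not\subseteq Y_j$ for every $j<k$, so the intersection is empty and the corresponding type is inconsistent. Passing from the index $i$ to the family $\{X_i \: i \in Y\}$ is harmless even when several $X_i$ coincide, because the intersection of that family equals $\bigcap_{i \in Y} X_i$ regardless of repetitions.

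The construction is short, and the only genuine subtlety I would flag explicitly is the bookkeeping between two layers of indexing: the characterization speaks about families $Z$ of subsets of $n$ (one parameter $b_X$ per subset), whereas a pattern speaks about positions (one witness $c_i$ per position). The reasonability hypothesis is used exactly once, and essentially, to force emptiness of $\bigcap_{i\in Z}X_i$ in the reverse direction; dually, the monotonicity $Z\subseteq Y \Rightarrow \bigcap Y \subseteq \bigcap Z$ is exactly what certifies reasonability of the pattern built in the forward direction. I do not expect any difficulty beyond these translations.
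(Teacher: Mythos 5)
Your proof is correct and follows essentially the same route as the paper's: the forward direction uses the identical $2^n$-pattern (families with nonempty intersection in $\C$, empty intersection in $\I$), and the reverse direction uses the identical assignment $c_i = b_{\{j<k \: i \in Y_j\}}$ with reasonability forcing the empty intersection on inconsistency conditions. If anything, you are slightly more careful than the paper in explicitly verifying reasonability of the $2^n$-pattern, justifying the $\C=\emptyset$ case, and flagging the harmlessness of repeated indices $X_i$.
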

\begin{proof}
    Assume $\fhi(x;y)$ is $\PM$. Fix $n<\omega$ and consider the following reasonable positive $2^n$-pattern
    \[\C = \{(Z,\emptyset) \: Z \subseteq 2^n \text{ such that } \bigcap Z \neq \emptyset\}\]
    \[\I = \{(Z,\emptyset) \: Z \subseteq 2^n \text{ such that }\bigcap Z = \emptyset\}\]
    where we identify each $Z \subseteq 2^n$ as a family of subsets of $n$.
    By $\PM$, there is $(b_i)_{i<2^n} = (b_X)_{X \subseteq n}$ such that, for every $(Z,\emptyset) \in \C$, i.e.\ for every $Z \subseteq 2^n$ such that $\bigcap Z \neq \emptyset$, the type $\{\fhi(x;b_i) \: i \in Z\} = \{\fhi(x;b_Y) \: Y \in Z\}$ is consistent and, for every $(Z, \emptyset) \in \I$ i.e.\ for every $Z \subseteq 2^n$ such that $\bigcap Z = \emptyset$, the type $\{\fhi(x;b_Y) \: Y \in Z \}$ is inconsistent. 

    Assume now that $\fhi(x;y)$ has the above property; fix $n<\omega$ and let $(\C,\I)$ be a reasonable positive $n$-pattern. By the above property, let $(b_X)_{X \subseteq n}$ as above. We can assume that $\C \neq \emptyset$ (otherwise, take $b_i = b_\emptyset$ for all $i<n$). Hence let $\C = \{(Y_i,\emptyset) \: i <k\}$. For any $i<n$ define
    \[b_i = b_{\{j<k \: i \in Y_j\}}\]
    Fix $(Y_{\Tilde{j}},\emptyset) \in \C$. To show that $\{\fhi(x;b_i) \: i \in Y_{\Tilde{j}}\}$ is consistent, by the above property, it is enough to show that 
    \[\bigcap_{i \in Y_{\Tilde{j}}}\{j<k \: i \in Y_j\} \neq \emptyset\]
     For every $i \in Y_{\Tilde{j}}$, $\Tilde{j}$ is one of the $j<k$ such that $i \in Y_{\Tilde{j}}$, thus $\Tilde{j} \in \bigcap_{i \in Y_{\Tilde{j}}}\{j<k \: i \in Y_j\}$.

    Assume now that $(Z, \emptyset) \in \I$ and that, toward contradiction, there is $a \in \U^x$ such that for all $i \in Z$, we have that $a \models \fhi(x;b_i)$ i.e.\ 
    \[\{\fhi(x;b_i) \: i \in Z\} = \{\fhi(x;b_{\{j<k \: i \in Y_j\}}) \: i \in Z\} \text{ is consistent.}\]
    By the above property, this is equivalent to 
    \[\emptyset \neq \bigcap_{i \in Z}\{j<k \: i \in Y_j\} \]
    Take $j<k$ in the above intersection. Then, for every $i \in Z$, $i \in Y_j$ contradicting reasonability of the pattern. 
\end{proof}
\begin{rem}
In \cite{cooper}, Cooper defined a property of formulas (named \emph{vp}) that closely resembles the above characterization of $\PM$. It is easy to check that, using Proposition \ref{PMchar}, $\PM$ is equivalent to Cooper's \emph{vp}, even at the level of formulas.
\end{rem}

As pointed out above, some dividing lines can be defined through realizations of \emph{reasonable positive patterns}. In particular, $\TP_2$ is given by positive patterns and hence $\PM$ implies $\TP_2$ and a fortiori $\IP$ and $\OP$. 
As described above, $\SOP$ is not given by positive patterns; hence, a natural question is the following: Is there a $\NSOP$ and $\PM$ theory? 

We answer positively to the above question showing an example:
\begin{eg}[A positive maximal $\NSOP_{4}$ theory]\label{pm-nsop-example}
Let $\L = \{P,O,R,(E_k)_{0<k<\omega}\}$ where $\mathrm{ar}(P) = \mathrm{ar}(O) = 1$, $\mathrm{ar}(R) = 2$ and for any $k<\omega$, $\mathrm{ar}(E_k) = k$.\\
Let $T_0$ be the following $\L$-theory: 
\begin{itemize}
    \item[$(A_1)$] $P,O$ partition the universe. 
    \item[$(A_2^{k})$]$E_k \subseteq O^k$ is irreflexive.
    \item[$(A_3)$] $R \subseteq P \times O$
    \item[$(A_4^{k})$] $\forall y_0...y_{k-1}\forall x(E_k(\Bar{y})  \to \neg \bigwedge_{i<k}R(x,y_i) )$
\end{itemize}
Let $\K = \{A \: A \models T_0 \text{ and $A$ is finite}\}$. We claim that $\K$ is a Fra\"issé class with free amalgamation. 

Since $T_0$ is $\forall$-axiomatized, the class of models of $T_0$ (in particular the class of its finite models) is closed under taking substructures. Thus $\K$ has $\mathrm{HP}$. 
Since we allow the empty structure to be a model of $T_0$, it's enough to prove $\mathrm{AP}$ to get $\mathrm{JEP}$ for free. 

\begin{prop}
    \label{K-has-AP}
    The class $\K$ has the amalgamation property. 
\end{prop}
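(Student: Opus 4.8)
The plan is to establish that $\K$ admits \emph{free amalgamation}, which immediately yields $\mathrm{AP}$. Given embeddings $A \hookrightarrow C$ and $A \hookrightarrow C'$ in $\K$, I would take $D$ to be the free amalgam: its universe is the pushout of sets $C \sqcup_A C'$ (gluing the two copies of $A$), and I declare a tuple to satisfy a relation of $\L$ in $D$ exactly when it lies entirely inside $C$ and satisfies that relation in $C$, or entirely inside $C'$ and satisfies it in $C'$. In particular, no relation holds of a tuple having entries from both $C \setminus A$ and $C' \setminus A$. With this definition $C$ and $C'$ embed into $D$ over $A$, so it only remains to check that $D \models T_0$.

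The universal axioms $(A_1)$, $(A_2^k)$, $(A_3)$ are inherited essentially for free: membership in $P$ or $O$ is a unary condition read off from whichever side an element comes from (and the two sides agree on $A$), while any $E_k$- or $R$-tuple realized in $D$ already lives entirely on one side, where the relevant containment and irreflexivity conditions already hold. The only axiom that requires genuine work is $(A_4^k)$.

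For $(A_4^k)$, I would fix a tuple $\bar{y} = y_0\dots y_{k-1}$ with $D \models E_k(\bar{y})$ together with an arbitrary $x$, and argue $\neg\bigwedge_{i<k} R(x,y_i)$. Since free amalgamation produces no new $E_k$-edges, the tuple $\bar{y}$ lies entirely in $C$ or entirely in $C'$; say $\bar{y}$ comes from $C$, the other case being symmetric. Suppose toward a contradiction that $R(x,y_i)$ holds in $D$ for every $i<k$. If $x \in C$, then all of these $R$-edges live in $C$, contradicting $C \models (A_4^k)$. If instead $x \in C' \setminus A$, then each edge $R(x,y_i)$ joins a point of $C' \setminus A$ to a point of $C$ and so can exist in $D$ only if it exists in $C'$, forcing $y_i \in C'$ and hence $y_i \in C \cap C' = A$; as this holds for every $i$, the whole tuple $\bar{y}$ lies in $A$. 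Then $E_k(\bar{y})$ holds in $A$, hence in $C'$, while $x \in C'$ and all $R(x,y_i)$ hold in $C'$, contradicting $C' \models (A_4^k)$.

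The crux of the argument, and the step I expect to be the main obstacle, is precisely this last case of $(A_4^k)$: showing that a point $x$ lying on the opposite side from the hyperedge $\bar{y}$ cannot see all of $\bar{y}$, because any $R$-edges witnessing such a configuration would collapse $\bar{y}$ into the shared part $A$ and thereby reduce to the axiom already holding inside $C'$. Everything else amounts to the routine observation that free amalgamation does not manufacture new instances of the relations $E_k$ and $R$.
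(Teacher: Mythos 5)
Your proof is correct and follows essentially the same route as the paper's: form the free amalgam (a relation holds in the amalgam only for tuples lying entirely inside one of the two factors and holding there), note that the universal axioms $(A_1)$--$(A_3)$ are inherited trivially, and verify $(A_4^k)$ by casing on which side the hyperedge and the point $x$ lie. If anything, your handling of the cross-side case is slightly more careful than the paper's, since you explicitly treat the sub-case in which the $R$-edges force the whole hyperedge $\bar{y}$ into the common part $A$, so that the contradiction relocates into $C'$; the paper's corresponding step asserts outright that no such $R$-edges exist, which glosses over exactly this sub-case.
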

\begin{proof}
    Consider the amalgamation problem $(A,B_i,f_i)_{i<2}$ in $\U$ where $f_i\colon A \to B_i$ are embeddings.
    
    We define an $\L$-structure $C$ as follows: 
    The universe of $C$ is partitioned by $P^C = (P^{B_0} \setminus P^A) \sqcup P^{B_1}$ and $O^C = (O^{B_0} \setminus O^A) \sqcup O^{B_1}$. The interpretation of $R$ and $E_k$ is the induced one (with no other relations holding). In particular there is no relation $E_k$ holding between elements of $O^{B_0}$ and $O^{B_1}$ inside $C$.\\
    Clearly $C$ solves the amalgamation problem. We need to show that $C \models T_0$.\\
    Clearly $C$ satisfies the axioms (and axiom schemes) $A_1$, $(A_2^k)_{k<\omega}$ and $(A_3^k)_{k<\omega}$.
    It remains to show that, for each $k<\omega$, $C \models (A_4^k)$. So, fix $k<\omega$, fix $c_0...c_{k-1} \in O^C$ and $a \in P^C$. Moreover, assume that $E_k(\Bar{c})$ holds. We need to show that $a \models \neg \bigwedge_{i<k}R(x;c_i)$. Since $E_k$ is induced, we have that either $\Bar{c} \in (O^{B_0})^k$ or $\Bar{c} \in (O^{B_1})^k$, but not both. Without loss of generality, $\Bar{c} \in (O^{B_1})^k$. Now, either $a \in P^{B_0} \setminus P^A$ xor $a \in P^{B_1}$.
    In the first case, since $\Bar{c} \in P^{B_1}$ and $R$ is the induced one, we have that $\models \neg R(a,c_i)$ for every $i<k$ and hence $a \models \neg \bigwedge_{i<k}R(x;c_i)$. In the second case, since $B_1 \models T_0$, $a \models \neg \bigwedge_{i<k}R(x;c_i)$. In conclusion $C \models (A_4^k)$ and hence $\K$ has $AP$. 
\end{proof}
Note that, in the above proof, $C$ is the free amalgam of $B_0$ and $B_1$ over $A$. Hence $\K$ is closed under free amalgamation (We refer to \cite{free-amalgamation_Conant}, Definition 3.1 and Example 3.2). 

Since $\K$ is a Fra\"issé class, let $\M$ be its limit and let $T_\M = \mathrm{Th}(\M)$.
Since $\K$ is closed under free amalgamation, we have that $T_\M$ is $\NSOP_4$ (See \cite{free-amalgamation_Conant}, Theorem 1.1) and in particular $\NSOP$.
\begin{prop}
    $T_\M$ is $\PM$.
\end{prop}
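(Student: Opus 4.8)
The plan is to apply the characterization of $\PM$ from Proposition \ref{PMchar} to the candidate formula $\fhi(x;y) = R(x,y)$, where by axiom $(A_3)$ any realization of $\fhi(x,b)$ lies in $P$ and the parameter $b$ lies in $O$. So fix $n<\omega$; I must produce parameters $(b_X)_{X \subseteq n}$ from $\U^y$ such that, for every nonempty family $Z \subseteq \P(n)$, the partial type $\{R(x,b_Y) \: Y \in Z\}$ is consistent precisely when $\bigcap Z \neq \emptyset$. The parameters will come from an explicit finite structure that I embed into $\M$ (by universality of the Fra\"iss\'e limit) and then elementarily into $\U$.

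First I would build the finite structure $A_n$. Take the $O$-sort to consist of $2^n$ distinct elements $\{b_X \: X \subseteq n\}$ and the $P$-sort of $n$ elements $\{a_i \: i<n\}$, and declare $R(a_i,b_X)$ to hold if and only if $i \in X$. For the hyperedges, declare, for each set $Z = \{Y_0,\dots,Y_{k-1}\}$ of $k$ distinct subsets of $n$ with $\bigcap_{j} Y_j = \emptyset$, that $E_k(b_{Y_0},\dots,b_{Y_{k-1}})$ holds (in every order of the tuple). This is compatible with irreflexivity of $E_k$ since the $b_{Y_j}$ are distinct, and it covers the singleton case $Z=\{\emptyset\}$ via $E_1$. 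The only axiom whose verification is not immediate is $(A_4^k)$: whenever $E_k(b_{Y_0},\dots,b_{Y_{k-1}})$ holds we have $\bigcap_j Y_j = \emptyset$ by construction, so no $a_i$ can satisfy $R(a_i,b_{Y_j})$ for all $j<k$ (that would force $i \in \bigcap_j Y_j$), and the $a_i$ exhaust $P^{A_n}$. Hence $A_n \models T_0$, so $A_n \in \K$, and I identify each $b_X$ with its image under the composite embedding $A_n \hookrightarrow \M \hookrightarrow \U$.

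Finally I would verify the equivalence of Proposition \ref{PMchar}. If $\bigcap Z \neq \emptyset$, pick $i \in \bigcap Z$; then $a_i$ satisfies $R(a_i,b_Y)$ for all $Y \in Z$, so $\{R(x,b_Y) \: Y \in Z\}$ is realized in $\U$ and hence consistent. Conversely, if $\bigcap Z = \emptyset$, write $Z = \{Y_0,\dots,Y_{k-1}\}$ with the $Y_j$ (and hence the $b_{Y_j}$) distinct; then $E_k(b_{Y_0},\dots,b_{Y_{k-1}})$ holds, and since this is an atomic fact preserved by the embeddings it holds in $\U$. As $\U \models (A_4^k)$, there is no common $R$-predecessor of the $b_{Y_j}$, so the type is inconsistent. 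By Proposition \ref{PMchar}, $\fhi(x;y) = R(x,y)$ is $\PM$, and therefore $T_\M$ is $\PM$.

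The crux of the argument is the design of the $E_k$-relations on $A_n$: they must be rich enough that every empty-intersection family is actively forbidden (yielding inconsistency), yet still compatible with $(A_4^k)$ in the presence of the witnessing points $a_i$ (yielding consistency whenever the intersection is nonempty). Matching the hyperedge pattern exactly to the intersection pattern of the indexing subsets is what makes both halves of the equivalence hold simultaneously; once $A_n$ is laid out and shown to lie in $\K$, the remainder is routine transfer along the embedding into the monster.
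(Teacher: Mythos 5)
Your argument is correct, but it is routed differently from the paper's. The paper verifies Definition \ref{PM-defn} directly: for each reasonable positive $n$-pattern $\mathcal{P}=(\C,\I)$ it builds a bespoke finite model $A_{\mathcal{P}}\models T_0$, with one $P$-point per consistency condition, $O$-points $b_0,\dots,b_{n-1}$, $R$ given by membership, and a hyperedge $E_{|Z|}$ placed on exactly the sets $Z$ with $(Z,\emptyset)\in\I$; there, reasonability of the pattern (no inconsistency set contained in a consistency set) is exactly what makes $A_{\mathcal{P}}$ satisfy $(A_4^k)$, and then $A_{\mathcal{P}}$ is embedded into $\M$. You instead go through Proposition \ref{PMchar}: for each $n$ you build a single structure $A_n$ of size $n+2^n$, with hyperedges on precisely the empty-intersection families, and this one structure witnesses the criterion for all families simultaneously. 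What your version buys: the verification of $(A_4^k)$ is automatic (a hyperedge forces empty intersection, so no $a_i$ can be a common $R$-predecessor), so reasonability never has to be invoked --- it is absorbed into the intersection combinatorics; the costs are an exponential-size structure and a dependence on Proposition \ref{PMchar}, which is legitimately available since it is proved before this example. The combinatorial core is the same in both proofs: $R$ interpreted as membership, $E_k$ marking forbidden families, inconsistency transferred to $\U$ because the universal axioms $(A_4^k)$ belong to $T_\M$, and universality of the Fra\"issé limit supplying the parameters. One point common to both (and worth making explicit): singleton inconsistency requests --- your family $Z=\{\emptyset\}$, the paper's singleton sets $Z_j$ --- require $E_1$ to be allowed to hold on elements, i.e.\ irreflexivity of $E_k$ must be read as ``all entries pairwise distinct,'' which is vacuous when $k=1$.
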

\begin{proof}
We claim that the formula $R(x;y)$ is $\PM$.
Let $n<\omega$ and fix $\mathcal{P} = (\C,\I)$ a reasonable positive $n$-pattern, where \[\C = \{(Y_0,\emptyset),...,(Y_{l-1},\emptyset)\} \text{ and } \I = \{(Z_0,\emptyset),...,(Z_{h-1},\emptyset)\}\]

Define the structure $A_\mathcal{P}$ as follows:
The universe of $A_\mathcal{P}$ is given by $P^{A_\mathcal{P}} \cup O^{A_\mathcal{P}} = \{a_i \: i < l\} \cup \{b_i \: i<n\}$.
We interpret $R$ by $R(a_i;b_j)$ holds if and only if $i \in Y_j$ and, for $k<\omega$, we define $E_k$ as follows: $ E_k(b_{i_0},...,b_{i_{k-1}})$ holds if and only if $k = |Z_j|$ for some $j < h $ and $\{i_0,...,i_{k-1}\} = Z_j$.
First of all, by definition of reasonable pattern (see Definition \ref{Reasonable}) there is no $(Z,\emptyset) \in \I$ such that $(Z,\emptyset) \subseteq (Y,\emptyset)$ (coordinatewise) for some $(Y,\emptyset) \in \C$. This ensures that the witness for $\{R(x,b_i) \: i \in Y\}$ doesn't conflict with the relation $E_{k}$ for $k<|Y|$ and the axiom $(A_4^k)$. 
Thus, we have that $A_\mathcal{P} \models T_0$. Hence there is an embedding of  $f\colon A_\mathcal{P} \to \M$. Let $B = \{f(b_i) \: i < n\}$. It's clear that $R(x;y)$ exhibits the pattern $\mathcal{P}$ as witnessed by $B$.  
\end{proof}
\end{eg}
As we showed, this example is actually a $\NSOP_4$ and $\PM$ theory. This means that no $\SOP_n$, $n\geq 4$, can be described by positive patterns. It is still open whether $\SOP_n$, for $n \geq 4$, can be described through (non-positive) patterns (even though, since $\SM$ implies $\SOP$ which in turn implies $\SOP_n$, we have $\SM$ implies every $\SOP_n$).

Again, we remark that $\SOP_3$ was characterized by Kaplan, Ramsey and Simon (\cite{kaplan2023generic}, Lemma 7.3) in a way that can be described using positive patterns. 

\subsection{Consistency maximality}
While tree properties can be characterized by positive patterns, some properties of formulas, such as $\OP$ and $\IP$, do not need any inconsistency conditions i.e.\ the patterns $(\C,\I)$ needed to get these properties have $\I = \emptyset$ (see Proposition \ref{div-lines-as-patterns}). It is thus natural to define the following property. 
\begin{defn}\label{CM-defn}
    We say that a formula $\fhi(x;y)$ is \emph{consistency maximal} ($\CM$) if, for every $n<\omega$ and every reasonable (consistency) $n$-pattern $(\C,\emptyset)$, $\fhi$ exhibits $(\C,\emptyset)$.
\end{defn}
\begin{rem}
    Note that we didn't ask for a $\CM$ formula to realize every \emph{exhibitable} pattern of the form $(\C,\emptyset)$ since every such pattern is exhibitable. To see this we give an example of a $\CM$ formula. 

    Consider the theory $T$ of the random graph. Recall that this is a countable graph characterized (up to isomorphism) by the following property:

    For any $A,B$ finite disjoint subsets of vertices, there exists a vertex $v$ such that 
    \[\models E(v,a) \text{ for every $a \in A$}\]
    \[\models \neg E(v,b) \text{ for every $b \in B$ }\]
    where $E$ is the edge relation.
    
    Let $\fhi(x;y) = E(x;y)$, fix $n<\omega$, a reasonable $n$-pattern $(\C,\emptyset)$ and $n$ distinct vertices $b_0,...,b_{n-1}$. Then, for every condition $(A^+,A^-) \in \C$, by the above property characterizing the random graph, there is $v$ such that 
    \[v \models \{\fhi(x;b_i) \: i \in A^+\} \cup \{\neg \fhi(x;b_i) \: i \in A^-\}\]
    Hence $\fhi(x;y)$ exhibits $(\C,\emptyset)$. In conclusion $\fhi(x;y)$ is $\CM$. 
\end{rem}
We prove that $\CM$ is a weakening of $\PM$. 
\begin{prop}\label{PMimpliesCM}
    If a formula $\fhi(x;y)$ is $\PM$ then $\fhi(x;y)$ is $\CM$.
\end{prop}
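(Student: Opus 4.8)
The plan is to show that an arbitrary reasonable consistency pattern $(\C,\emptyset)$ --- whose consistency conditions are allowed to involve negative instances of $\fhi$ --- can be simulated by a reasonable \emph{positive} pattern on a doubled index set, which $\PM$ then exhibits for us. The conceptual difficulty, and the only real obstacle, is that a positive pattern can never directly ask for a negative instance $\neg\fhi(x;b_i)$; the whole point of the argument is to encode such a requirement without writing down a negative instance.

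First I would fix $n<\omega$ and a reasonable $n$-pattern $(\C,\emptyset)$, assuming $\C\neq\emptyset$ (the empty case is vacuous). Write $\C=\{(A_j^+,A_j^-)\:j<m\}$; reasonableness (condition (ii) of Definition \ref{Reasonable}) gives $A_j^+\cap A_j^-=\emptyset$ for each $j$. On the index set $2n$ I let $2i$ stand for the instance ``$\fhi(x;b_i)$'' and $2i+1$ stand for ``$\neg\fhi(x;b_i)$'', and define the positive $2n$-pattern $(\C',\I')$ by
\[
\C'=\{(S_j,\emptyset)\:j<m\},\qquad S_j=\{2i\:i\in A_j^+\}\cup\{2i+1\:i\in A_j^-\},
\]
\[
\I'=\{(\{2i,2i+1\},\emptyset)\:i<n\}.
\]
The inconsistency conditions force $\fhi(\U;c_{2i})$ and $\fhi(\U;c_{2i+1})$ to be disjoint. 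Crucially I do \emph{not} attempt to force them to cover $\U^x$ (that would be a universal request requiring a negative instance); disjointness alone will suffice.

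The key verification is that $(\C',\I')$ is reasonable: conditions (ii) and (iii) hold trivially because the pattern is positive, while condition (i) reduces to checking that no set $\{2i,2i+1\}$ is contained in any $S_j$. But $\{2i,2i+1\}\subseteq S_j$ would mean $i\in A_j^+$ and $i\in A_j^-$ simultaneously, contradicting $A_j^+\cap A_j^-=\emptyset$. Hence $(\C',\I')$ is a reasonable positive pattern, and $\PM$ applies.

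Finally, applying $\PM$ to $(\C',\I')$ yields witnesses $(c_t)_{t<2n}$, and I set $B=(b_i)_{i<n}$ with $b_i:=c_{2i}$. For each $j<m$ the consistency condition $(S_j,\emptyset)\in\C'$ supplies a point $a_j$ with $\fhi(a_j;c_{2i})$ for $i\in A_j^+$ and $\fhi(a_j;c_{2i+1})$ for $i\in A_j^-$; the disjointness condition $(\{2i,2i+1\},\emptyset)\in\I'$ then forces $\neg\fhi(a_j;c_{2i})$ whenever $\fhi(a_j;c_{2i+1})$ holds, i.e.\ $\neg\fhi(a_j;b_i)$ for every $i\in A_j^-$. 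Thus $a_j$ realizes $p^B_{(A_j^+,A_j^-)}(x)$, so with the single parameter set $B$ the formula $\fhi$ exhibits $(\C,\emptyset)$, proving $\fhi$ is $\CM$. The doubling trick is exactly what lets one discharge the negative instances using only positive inconsistency (disjointness) conditions, which is why $\PM$ --- a statement about positive patterns --- is strong enough to yield $\CM$.
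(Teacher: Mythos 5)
Your proof is correct and is essentially the paper's own argument: both simulate the consistency pattern by a positive pattern on a doubled index set, encode $\neg\fhi(x;b_i)$ via a paired parameter, impose pairwise disjointness as positive inconsistency conditions (the paper's $\{i,n+i\}$ versus your $\{2i,2i+1\}$), and verify reasonableness of the new pattern from $A^+\cap A^-=\emptyset$. The only difference is the cosmetic relabeling of the doubled indices (interleaved versus concatenated halves).
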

\begin{proof}
    Let $n<\omega$ and let $(\C,\emptyset)$ be a reasonable $n$-pattern. Define the positive $2n$-pattern
    \[\C' := \{(A^+ \cup (n+A^-), \emptyset)\: (A^+,A^-)\in \C\}\]
    \[\I' = \{(\{i,i+n\},\emptyset) \: i<n\}\]
    where $n+A^- = \{n+i \: i \in A^-\}$. Note that, since $A^+ \cap A^- = \emptyset$, the positive pattern $(\C',\I')$ is reasonable.
    Since $\fhi(x;y)$ is positive maximal, there are $b_0,...,b_{n-1},b_{n},...,b_{2n-1}$ witnessing that $\fhi$ exhibits $(\C',\I')$.
    We claim that $b_0,...,b_{n-1}$ witness that $\fhi$ exhibits $(\C,\emptyset)$.
    Let $(A^+,A^-) \in \C$. Then 
    \[\{\fhi(x;b_i) \: i \in A^+ \cup (n+A^-)\} \text{ is consistent.}\]
    Let $a$ be a realization of it and let $i \in A^-$. If, toward contradiction, $a \models \fhi(x;b_i)$, then, by the choice of $a$, $a \models \fhi(x;b_i)\land \fhi(x;b_{n+i})$ which is inconsistent. Hence, for each $i \in A^-$, $a \models \neg \fhi(x;b_i)$. This shows that 
    \[\{\fhi(x;b_i) \: i \in A^+\} \cup \{\neg \fhi(x;b_j) \: j \in A^-\} \text{ is consistent.}\]
    Hence, $\fhi$ exhibits $(\C,\emptyset)$. In conclusion $\fhi$ is $\CM$. 
\end{proof}
We now give an example of a $\NPM$ but $\CM$ theory, showing that the implication in Proposition \ref{PMimpliesCM} is strict. 
\begin{eg}
    Again, consider the theory $T$ of the random graph. It is well known that this theory is simple ($\NTP$); moreover we proved that it is $\CM$. If $T$ was $\PM$, then, since $\TP$ can be realized through patterns, $T$ would have $\TP$, contradicting the simplicity of the theory.
\end{eg}

We now prove that this weaker notion of maximality ($\CM$) is actually equivalent to the \emph{Independence Property} $(\IP)$. 
Recall that a formula $\fhi(x;y)$ has $\IP$ if, by definition, there are tuples $(a_i)_{i<\omega}$ and $(b_I)_{I \subseteq \omega}$ such that 
\[\models \fhi(a_i;b_I) \iff i \in I \]
By compactness, this is equivalent to having, for every $n<\omega$, a $n$th approximation of the above property. 
\begin{prop}\label{CMiffIP}
    A formula $\fhi(x;y)$ is $\CM$ if and only if it has $\IP$
\end{prop}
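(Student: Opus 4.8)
The plan is to prove both directions of the equivalence $\CM \Leftrightarrow \IP$ at the level of formulas. The easier direction is $\IP \Rightarrow \CM$: if $\fhi(x;y)$ has $\IP$, then by definition (and compactness) for each $n<\omega$ there are $(a_i)_{i<2^n}$ and $(b_I)_{I \subseteq 2^n}$ such that $\models \fhi(a_i;b_I) \iff i \in I$. The point is that an $\IP$ formula can, by a suitable choice of parameters, realize \emph{every} consistency condition simultaneously, which is exactly what a reasonable pattern $(\C,\emptyset)$ requires. So first I would fix $n<\omega$ and a reasonable $n$-pattern $(\C,\emptyset)$, take $n$ parameters $b_0,\dots,b_{n-1}$ coming from the $\IP$ configuration (indexed so that the sets $\fhi(\U,b_i)$ shatter), and then for each condition $(A^+,A^-) \in \C$ use independence to find a witness $a$ with $\models \fhi(a;b_i)$ for $i \in A^+$ and $\models \neg\fhi(a;b_i)$ for $i \in A^-$; this is possible precisely because $A^+ \cap A^- = \emptyset$ by reasonableness. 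This shows $\fhi$ exhibits $(\C,\emptyset)$, hence $\fhi$ is $\CM$.

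For the converse $\CM \Rightarrow \IP$, the strategy is to exhibit a single well-chosen reasonable consistency pattern whose realization directly encodes the $n$th approximation to $\IP$. Concretely, for fixed $n<\omega$ I would consider the $n$-pattern $(\C,\emptyset)$ with
\[
\C = \{(X, n\setminus X) \: X \subseteq n\}.
\]
This is a complete, reasonable pattern with empty inconsistency part, so by $\CM$ there are $(b_i)_{i<n}$ such that for every $X \subseteq n$ the type $\{\fhi(x;b_i) \: i \in X\} \cup \{\neg\fhi(x;b_j) \: j \in n\setminus X\}$ is consistent; realizing it gives a witness $a_X$ with $\models \fhi(a_X;b_i) \iff i \in X$. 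This is exactly the finite approximation to $\IP$ from Proposition \ref{div-lines-as-patterns}(2). Letting $n$ vary and applying compactness yields the full independence configuration $(a_X)_{X\subseteq\omega}$, $(b_i)_{i<\omega}$, so $\fhi$ has $\IP$.

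The main conceptual content — and the step where I would be most careful — is matching up the indexing conventions between the two formulations. In $\IP$ the roles are that a single system of parameters $(b_i)$ is shattered by varying the object-variable witnesses $a_X$, whereas a pattern $(\C,\emptyset)$ asks that each consistency condition be separately realizable over one fixed tuple of $n$ parameters. These align perfectly because the pattern $\C = \{(X,n\setminus X) : X\subseteq n\}$ encodes precisely ``all $2^n$ Boolean combinations are consistent over $b_0,\dots,b_{n-1}$'', which is the defining shattering requirement of $\IP$. The only genuine obstacle is bookkeeping: verifying that reasonableness (which here only demands $A^+ \cap A^- = \emptyset$ for each condition, since $\I = \emptyset$ makes condition (i) of Definition \ref{Reasonable} vacuous) is automatically satisfied by the complete pattern above, and that no hidden inconsistency constraints are being smuggled in. Once the indexing is pinned down, both directions reduce to invoking the characterization of $\IP$ via patterns in Proposition \ref{div-lines-as-patterns}(2) together with compactness, and no delicate calculation remains.
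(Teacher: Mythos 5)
Your proposal is correct, and both directions rest on the same two pattern constructions as the paper's proof, but you organize the argument differently around the object/parameter duality. The paper never argues about $\IP$ for $\fhi$ itself: it proves that $\CM$ is equivalent to $\fhi^{\mathrm{opp}}(y;x)$ having $\IP$ --- in that formulation both directions are immediate, since the $\CM$ witnesses $(b_i)_{i<n}$, $(a_X)_{X \subseteq n}$ with $\models\fhi(a_X;b_i)\iff i\in X$ are verbatim an $\IP$ configuration for $\fhi^{\mathrm{opp}}$, and conversely --- and then quotes the standard fact that a formula has $\IP$ iff its opposite does. You work with $\fhi$ directly, which pushes that duality into your argument at two places. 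In $\IP\Rightarrow\CM$, passing from $\models\fhi(a_i;b_I)\iff i\in I$ to $n$ parameters whose $\fhi$-sets shatter is exactly the exponential re-indexing (identify each $i<2^n$ with a set $S_i\subseteq n$ and take the parameters $b_{I_j}$ with $I_j=\{i<2^n \: j\in S_i\}$); your parenthetical ``indexed so that the sets $\fhi(\U,b_i)$ shatter'' is carrying this entire step, and it is the same computation that proves self-duality of $\IP$, so it should be written out. In $\CM\Rightarrow\IP$, the configuration you produce is literally $\IP$ for $\fhi^{\mathrm{opp}}$ rather than the configuration of Definition \ref{classical-div-lines}(2), so the conclusion ``hence $\fhi$ has $\IP$'' is licensed only by citing Proposition \ref{div-lines-as-patterns}(2) (as you do) or the self-duality fact; it is not mere bookkeeping. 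Net effect: both proofs are correct and of the same depth; the paper's detour through $\fhi^{\mathrm{opp}}$ isolates the one genuinely combinatorial point into a single citation, whereas your version inlines it, so what you call ``matching up indexing conventions'' is in fact the mathematical content of the proposition.
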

\begin{proof}
    We will actually prove that $\fhi(x;y)$ is $\CM$ if and only if $\fhi^{\mathrm{opp}}(y;x)$ has $\IP$. On the other hand, it's well-known that a formula has $\IP$ if and only if the opposite formula has $\IP$ (See, e.g., \cite{casanovas-NIP}).  

    Suppose $\fhi(x;y)$ is $\CM$. Fix $n<\omega$ and consider the pattern 
    \[\C = \{(X,n\setminus X) \: X \in \mathcal{P}(n)\}\]
    \[\I = \emptyset\]
    Since this is a consistency pattern, $\fhi(x,y)$ exhibits it; let $B=(b_i)_{i<n}$ be a witness. For any $X \in \P(n) = \C$, let $a_X$ a realization of $p_{X,n\setminus X}^B(x)$. Then,
    \[\models \fhi^{\mathrm{opp}}(b_i,a_X) \iff \models\fhi(a_X,b_i) \iff i \in X\]
    Hence, for every $n<\omega$, $\fhi^{\mathrm{opp}}(y;x)$ has a $n$th approximation of $\IP$. By compactness, $\fhi^{\mathrm{opp}}(y;x)$ has $\IP$. 

    Assume now that $\fhi^{\mathrm{opp}}(y;x)$ has $\IP$. Take $(b_i)_{i<\omega}$ and $(a_I)_{I \subseteq \omega}$ witnessing it. 
    Fix a reasonable $n$-pattern $(\C,\emptyset)$. Then, for any condition $(A^+,A^-) \in \C$, we have that 
    
    \[a_{A^+} \models \{\fhi^{\mathrm{opp}}(b_i;x) \: i \in A^+\} \cup \{\fhi^{\mathrm{opp}}(b_i;x) \: i \notin A^+, i<n\}\]
    Since $A^- \subseteq n \setminus A^+$, we have 
    
    \[ \{\fhi^{\mathrm{opp}}(b_i;x) \: i \in A^+\} \cup \{\fhi^{\mathrm{opp}}(b_i;x) \: i \notin A^+\} \supseteq \]\[ \{\fhi^{\mathrm{opp}}(b_i;x) \: i \in A^+\} \cup \{\fhi^{\mathrm{opp}}(b_i;x) \: i \in A^-\} \] Which in turn equals to \[\{\fhi(x;b_i) \: i \in A^+\} \cup \{\fhi(x;b_i) \: i \in A^-\} \]
    Hence $p_{A^+,A^-}^B(x)$ is consistent and hence $\fhi(x;y)$ is $\CM$. 
\end{proof}

\begin{rem}
    Given the above equivalence, we have an alternative proof of Proposition $\ref{PMimpliesCM}$:

    Since $\TP_2$ can be realized by positive patterns, if $\fhi(x;y)$ is $\PM$, then $\fhi(x;y)$ has $\TP_2$ which in turn implies $\IP$ (See Fact \ref{implications-classical-div-lines}) which, by Proposition \ref{CMiffIP}, is equivalent to $\CM$.
\end{rem}

\section{The \texorpdfstring{$\PM^{(k)}$}{TEXT} hierarchy}\label{Sec-PMk}
Natural weakenings of $\PM$ can be obtained by restricting the size of possible sets of inconsistencies. Some combinatorial properties that can be defined in terms of inconsistency sets of size $>2$, such as $k$-$\TP$ and $k$-$\TP_2$, turn out to be equivalent, at the level of theories, to their counterparts with inconsistency sets of size $2$ ($2$-$\TP$ and $2$-$\TP_2$). In contrast, by weakening $\PM$ restricting the size of possible sets of inconsistencies, we get a \emph{strict} hierarchy of properties $(\PM^{(k)})_{1<k<\omega}$. 

\begin{defn}\label{PM_k-defn}
    Let $1<k<\omega$. A formula $\fhi(x;y)$ is $\PM^{(k)}$ if for all $n<\omega$ and for every reasonable positive $n$-pattern $(\C,\I)$ with for all $(Z^+,\emptyset) \in \I$, $|Z^+| = k$, $\fhi(x;y)$ exhibits $(\C,\I)$. 
    We say that a theory $T$ is $\PM^{(k)}$ if there is a $\PM^{(k)}$ formula. Otherwise we say $T$ is $\NPM^{(k)}$.
\end{defn}
\begin{rem}
    We don't consider $\PM^{(0)}$ and $\PM^{(1)}$ since every theory is $\PM^{(0)}$ and $\PM^{(1)}$. For $\PM^{(0)}$, fix a tuple $a \in \U^x$ and take the formula 
    \[\fhi(x;y) = (x=y)\]
    Since we excluded conditions of the form $(\emptyset,\emptyset)$ from the definition of patterns, $\I = \emptyset$ and thus $\fhi(x;y)$ is asked to exhibit positive patterns of the form $(\C,\emptyset)$. Fix $n<\omega$ and an $n$-pattern as above.  Define $b_i =a $ for all $i<n$; then, for any condition $(A^+,\emptyset) \in \C$
    \[\bigwedge_{i \in A^+} (x=a) \text{ is consistent, witnessed by $a$}\]
    For $\PM^{(1)}$, fix $a\neq b$ and consider the formula 
    \[\fhi(x;y) = (y = a) \land (x=x)\]
    Fix $n<\omega$ and a reasonable positive $n$-pattern $(\C,\I)$ with for all $(Z^+,\emptyset)$, $|Z^+| = 1$. For all $i$ such that there is a condition $(\{i\},\emptyset) \in \I$, define $b_i = b$. Otherwise, define $b_i = a$. Then, for $(A^+,\emptyset) \in \C$, since pattern are reasonable, for all $i \in A^+$, $b_i = a$ and thus 
    \[\bigwedge_{i<n}(a=a)\land (x=x) \text{ is consistent.} \]
    For $(\{i\},\emptyset) \in \I$, $b_i = b \neq a$ and thus 
    \[\fhi(x;b_i) = (a=b) \land (x=x) \text{ is inconsistent.}\]
    In conclusion, we disregard $\PM^{(0)}$ and $\PM^{(1)}$ since they are not very sensible notions. 
\end{rem}
We can characterize the $\PM^{(k)}$ properties by ``realizations of hypergraphs":
\begin{prop}\label{PMk-real-hypergraph}
    $\fhi(x;y)$ is $\PM^{(k)}$ if and only if for every finite $k$-hypergraph $(n,E)$, $\fhi$ \emph{realizes} $(n,E)$ i.e.\ there are $(b_i)_{i<n}$ such that 
    \begin{itemize}
        \item[(i)] for any $i_0,...,i_{k-1} \in n$ with $\neg E(i_0,...,i_{k-1})$, we have \[\{\fhi(x;b_{i_j}) \: j<k\} \text{ is inconsistent}\]
        and
        \item[(ii)] for any clique $\{i_0,...,i_{l-1}\} \subseteq n$
        \[\{\fhi(x;b_{i_j}) \: j<l\} \text{ is consistent.}\]
    \end{itemize}
\end{prop}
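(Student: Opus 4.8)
The plan is to set up a dictionary between reasonable positive $n$-patterns whose inconsistency sets all have size $k$ and finite $k$-hypergraphs on the vertex set $n$, under which \emph{exhibition} of the pattern and \emph{realization} of the hypergraph become literally the same statement about witnesses $(b_i)_{i<n}$. The point I would isolate first is that, for such a pattern, the reasonability clause (Definition \ref{Reasonable}(i)) reduces in the positive case to the single requirement that no inconsistency set $Z$ is contained in a consistency set $Y$. Interpreting the size-$k$ inconsistency sets as the \emph{non-edges} of a $k$-hypergraph, this is exactly the statement that every consistency set is a \emph{clique} (a set all of whose $k$-subsets are edges): if a size-$k$ non-edge $Z$ sat inside a clique $Y$, then $Z$, being a $k$-subset of a clique, would itself be an edge, a contradiction.

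For the implication from $\PM^{(k)}$ to hypergraph realization, I would take an arbitrary finite $k$-hypergraph $(n,E)$ and attach to it the positive $n$-pattern $(\C,\I)$ with $\I = \{(Z,\emptyset) : Z \subseteq n,\ |Z|=k,\ Z \text{ a non-edge}\}$ and $\C = \{(Y,\emptyset) : \emptyset \neq Y \subseteq n \text{ a clique}\}$. Every inconsistency set has size $k$ by construction, and reasonability holds by the observation above, so $\PM^{(k)}$ supplies witnesses $(b_i)_{i<n}$ exhibiting $(\C,\I)$. Unwinding Definition \ref{defn-exhibition-of-patterns} then shows these are precisely witnesses realizing $(n,E)$: the inconsistency conditions give clause (i), and the consistency conditions (one for each clique) give clause (ii).

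Conversely, to pass from realization to $\PM^{(k)}$, I would fix a reasonable positive $n$-pattern $(\C,\I)$ in which every $(Z^+,\emptyset)\in\I$ has $|Z^+|=k$ and define a $k$-hypergraph $(n,E)$ by declaring a $k$-element subset to be a non-edge precisely when it occurs in $\I$. By hypothesis $\fhi$ realizes $(n,E)$ with some $(b_i)_{i<n}$. Each $Z\in\I$ is then a non-edge, so its $\fhi$-type is inconsistent by clause (i); and for each $(Y,\emptyset)\in\C$, reasonability forbids any $Z\in\I$ from lying inside $Y$, i.e.\ no $k$-subset of $Y$ is a non-edge, so $Y$ is a clique and its $\fhi$-type is consistent by clause (ii). Thus $(b_i)_{i<n}$ witnesses exhibition of $(\C,\I)$, and $\fhi$ is $\PM^{(k)}$.

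Since both translations are immediate once the correspondence is set up, I do not expect a serious obstacle; the only points requiring care are bookkeeping ones. One must read clause (i) of realization as ranging over genuine $k$-element (hence distinct-indexed) non-edges, since otherwise an irreflexive repeated tuple would force a singleton type to be inconsistent, clashing with the vacuous-clique consistency demanded by clause (ii). Relatedly, one must remember that every subset of size $<k$ is a vacuous clique and so belongs in $\C$ in the first direction, while the empty condition $(\emptyset,\emptyset)$ is excluded by the definition of patterns and corresponds only to the trivially consistent empty type.
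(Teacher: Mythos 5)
Your proposal is correct and follows essentially the same route as the paper: both directions use the identical dictionary (size-$k$ inconsistency sets $\leftrightarrow$ non-edges, consistency sets $\leftrightarrow$ cliques), with reasonability verified in exactly the same way, namely that a $k$-subset of a clique must be an edge. Your extra bookkeeping remarks (distinct-indexed non-edges, vacuous small cliques, exclusion of $(\emptyset,\emptyset)$) are sound refinements of points the paper leaves implicit, but they do not constitute a different argument.
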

\begin{proof}
    Suppose $\fhi(x;y)$ is $\PM^{(k)}$. Let $(n,E)$ a finite $k$-hypergraph. Consider the positive $n$-pattern $(\C,\I)$ where 
    \[\C := \{(\{i_0,...,i_{l-1}\},\emptyset) \: \{i_0,...,i_{l-1}\}\text{ is a clique }\}\]
    \[\I :=\{(\{i_0,...,i_{k-1}\},\emptyset) \: \neg E(i_0,...,i_{k-1})\}\]
    Since no non-hyperedge can be a subset of a clique, this is a reasonable pattern. By $\PM^{(k)}$, $\fhi$ exhibits $(\C,\I)$ and thus there are $(b_i)_{i<n}$ witnessing it. It's easy to see that, with these parameters, $\fhi$ realizes $(n,E)$.

    Assume now that $\fhi(x;y)$ realizes every $k$-hypergraph. Let $n<\omega$ and let $(\C,\I)$ a positive $n$-pattern with for all $(Z^+,\emptyset) \in \I$, $|Z^+| = k$. Consider the graph $(n,E)$ where, $E(i_0,...,i_{k-1})$ holds if and only if $(\{i_0,...,i_{k-1}\},\emptyset) \notin \I$. Since $\fhi(x;y)$ realizes every finite $k$-hypergraph, there are $(b_i)_{i<n}$ witnessing it.
    Now, given $(A^+,\emptyset) \in \C$, since the pattern is reasonable, every $k$-element subset $B$ of $A^+$ is such that $(B,\emptyset) \notin \I$, thus $A^+$ forms a clique in $(n,E)$ and therefore 
    \[\{\fhi(x;b_i) \: i \in A^+\} \text{ is consistent.}\]
    On the other hand, given $(\{i_0,...,i_{k-1}\},\emptyset) \in \I$, we have  $\neg E(i_0,...,i_{k-1})$ and thus 
    \[\{\fhi(x;b_{i_j}) \: j<k\} \text{ is inconsistent.}\]
    Thus $\fhi$ exhibits $(\C,\I)$. 
\end{proof}
\begin{prop}
    If $\fhi(x;y)$ is $\PM^{(k)}$, then $\fhi(x;y)$ realizes the generic $k$-hypergraph. Moreover, the parameters may be chosen to be (generic ordered $k$-hypergraph)-indiscernibles.  
\end{prop}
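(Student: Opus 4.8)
The plan is to establish the two assertions in turn: the first reduces to the finite realization statement of Proposition \ref{PMk-real-hypergraph} via compactness, and the second is obtained from the modeling property of the generic ordered $k$-hypergraph (Fact \ref{gener-ind-hypergraph}).

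For the first assertion, let $G$ be the generic $k$-hypergraph with (countable) vertex set $V$. I would introduce a new constant $b_g$ for each $g \in V$ and write down the set of sentences in the expanded language asserting, for each non-hyperedge $\{g_0,\dots,g_{k-1}\}$ of $G$, the inconsistency $\neg \exists x \bigwedge_{j<k}\fhi(x;b_{g_j})$, and, for each finite clique $\{g_0,\dots,g_{l-1}\}$ of $G$, the consistency $\exists x \bigwedge_{j<l}\fhi(x;b_{g_j})$. Any finite subset of these sentences mentions only finitely many vertices, which span a finite subhypergraph $G_0 \subseteq G$; by Proposition \ref{PMk-real-hypergraph}, since $\fhi$ is $\PM^{(k)}$, it realizes $G_0$, and the resulting parameters satisfy the finitely many conditions in question. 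Hence the full set of sentences is finitely satisfiable, and by compactness there are $(b_g)_{g \in V}$ realizing $G$.

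For the moreover part, let $H$ be the generic ordered $k$-hypergraph; its reduct to the hypergraph language is (isomorphic to) $G$, so the previous paragraph already yields parameters $(c_h)_{h \in H}$, indexed by $H$, realizing its underlying hypergraph (the linear order on $H$ plays no role in the consistency and inconsistency conditions). By Fact \ref{gener-ind-hypergraph}, $H$ has the modeling property, so there is an $H$-indiscernible $(b_h)_{h \in H}$ based on $(c_h)_{h \in H}$. It remains to check that $(b_h)$ still realizes $H$, and hence $G$ after forgetting the order. Fix a tuple $\bar h = (h_0,\dots,h_{l-1})$ from $H$ and let $\Sigma$ be the finite set of formulas containing $\exists x \bigwedge_{j<l}\fhi(x;y_j)$, which encodes both the consistency request (when $\bar h$ is a clique) and, via its negation, the inconsistency request (when $\bar h$ is a non-hyperedge of size $k$). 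Being based on $(c_h)$ gives a tuple $\bar h'$ from $H$ with $\mathrm{qftp}(\bar h') = \mathrm{qftp}(\bar h)$ and $\tp_\Sigma(b_{\bar h}) = \tp_\Sigma(c_{\bar h'})$. Since $\bar h'$ and $\bar h$ have the same quantifier-free type, they carry the same hyperedge structure; as $(c_h)$ realizes $H$, the appropriate condition holds of $c_{\bar h'}$, and it transfers to $b_{\bar h}$ because the witnessing $\Sigma$-formula (or its negation) is shared. Thus $(b_h)$ realizes the generic ordered $k$-hypergraph.

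The compactness packaging of the first assertion and the bookkeeping of quantifier-free types are straightforward. The one point deserving care is the final transfer: I must ensure that both the existential consistency conditions and the universal inconsistency conditions are captured by finitely many formulas in $\Sigma$, so that being based on applies, and that equality of quantifier-free types in $H$ genuinely preserves which $k$-subsets are hyperedges. This is the main, though modest, obstacle; the remainder is routine.
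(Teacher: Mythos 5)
Your proof is correct and takes essentially the same approach as the paper: the paper's entire proof is the one-line instruction ``apply compactness and the modeling property,'' and your argument is precisely a careful unpacking of those two steps. The details you supply---finite satisfiability of the hypergraph conditions via Proposition~\ref{PMk-real-hypergraph}, and the transfer of the consistency/inconsistency conditions from $(c_h)$ to the based-on indiscernible $(b_h)$ using equality of quantifier-free types---are sound and are exactly what the paper leaves implicit.
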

\begin{proof}
    Apply compactness and the modeling property (which ordered $k$-hypergraphs have by Fact \ref{gener-ind-hypergraph}).
\end{proof}
\begin{prop}\label{PM_k+1-implies_PM_k}
    For every $1<k<\omega$, if $\fhi(x;y)$ is $\PM^{(k+1)}$, then $\psi(x;y_0...y_{k}) = \bigwedge_{i<k+1}\fhi(x;y_i)$ is $\PM^{(k)}$.
\end{prop}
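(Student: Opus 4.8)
The plan is to work through the hypergraph characterization of Proposition \ref{PMk-real-hypergraph}: since $\fhi$ is $\PM^{(k+1)}$, it realizes every finite $(k+1)$-hypergraph, and it suffices to show that $\psi(x;y_0\dots y_k)=\bigwedge_{i<k+1}\fhi(x;y_i)$ realizes every finite $k$-hypergraph $(n,E)$. The whole argument rests on one bridging observation: a parameter for $\psi$ is a $(k+1)$-tuple of parameters for $\fhi$, so if I group a family of $\fhi$-parameters indexed by $N:=n\times(k+1)$ into blocks $\bar c_i=(c_i^0,\dots,c_i^k)$ with $c_i^l$ attached to the vertex $(i,l)$, then for every $S\subseteq n$ the type $\{\psi(x;\bar c_i):i\in S\}$ is consistent if and only if the type $\{\fhi(x;c_i^l):i\in S,\ l\le k\}$, indexed by the block $S\times(k+1)\subseteq N$, is consistent.

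Fix a finite $k$-hypergraph $(n,E)$. First I would build an auxiliary $(k+1)$-hypergraph $(N,F)$ on $N=n\times(k+1)$ as follows: writing $\pi$ for the projection onto the first coordinate, declare a $(k+1)$-element set $T\subseteq N$ to be an $F$-hyperedge precisely when $\pi(T)\subseteq n$ is an $E$-clique (every $k$-subset of $\pi(T)$ is an $E$-edge). This depends only on the underlying set $\pi(T)$, so it is a well-defined $(k+1)$-hypergraph. As $\fhi$ realizes $(N,F)$, fix witnessing parameters $(d_v)_{v\in N}$ and set $c_i^l=d_{(i,l)}$, $\bar c_i=(c_i^0,\dots,c_i^k)$.

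The core step is the \emph{clique correspondence}: for every $S\subseteq n$, the block $S\times(k+1)$ is a clique of $(N,F)$ if and only if $S$ is an $E$-clique. In one direction, if $S$ is an $E$-clique then any $(k+1)$-subset $T\subseteq S\times(k+1)$ has $\pi(T)\subseteq S$, hence $\pi(T)$ is again an $E$-clique by downward closure, so $T\in F$; thus $S\times(k+1)$ is a clique. Conversely, if $S$ fails to be an $E$-clique, choose a $k$-subset $\{i_0,\dots,i_{k-1}\}\subseteq S$ with $\neg E(i_0,\dots,i_{k-1})$ and form $T=\{(i_0,0),(i_0,1),(i_1,0),\dots,(i_{k-1},0)\}$, duplicating the index $i_0$ across two distinct copies so that $|T|=k+1$ and $\pi(T)=\{i_0,\dots,i_{k-1}\}$; then $\pi(T)$ is not an $E$-clique, so $T\notin F$ and $S\times(k+1)$ is not a clique. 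Since a realized hypergraph has the property that a type is consistent exactly when its index set is a clique (cliques are consistent by Proposition \ref{PMk-real-hypergraph}, and any non-clique contains a non-hyperedge, hence an inconsistent subtype), the bridge of the first paragraph now turns this correspondence into: $k$-non-edges of $E$ yield inconsistent $\psi$-types and $E$-cliques yield consistent $\psi$-types. Thus $(\bar c_i)_{i<n}$ witnesses that $\psi$ realizes $(n,E)$, and $\psi$ is $\PM^{(k)}$.

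I expect the main obstacle to be the arity mismatch: a failure of $\PM^{(k)}$ lives in $k$-ary non-edges, whereas $\fhi$ only delivers inconsistencies from $(k+1)$-ary non-hyperedges. This is exactly what forces the extra factor $k+1$ in the vertex set and the trick of repeating one index when producing the bad set $T$. The other point demanding care is the forward direction of the clique correspondence — verifying that a good block $S\times(k+1)$ contains \emph{no} spurious $(k+1)$-subset outside $F$ — which comes down to the downward closure of $E$-cliques and is what guarantees that an $E$-clique really produces a consistent $\psi$-type.
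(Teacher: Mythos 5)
Your proof is correct, and at the structural level it is the same argument as the paper's: pass through the hypergraph characterization of Proposition \ref{PMk-real-hypergraph}, blow each vertex $i<n$ up into a block of $k+1$ fresh vertices, realize an auxiliary $(k+1)$-hypergraph on the blocks using $\fhi$, and read off the $\psi$-parameters as the blocks. The one place you genuinely diverge is the definition of the auxiliary hyperedge relation, and your choice is the better one. The paper takes as hyperedges only the blocks $K_i$ themselves and all $(k+1)$-subsets of $K_{i_0}\cup\dots\cup K_{i_{k-1}}$ for $\{i_0,\dots,i_{k-1}\}$ an $E$-hyperedge, with ``no other hyperedge present,'' and then asserts that the union of blocks over any $E$-clique is a clique of the auxiliary hypergraph. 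Read literally, that assertion fails for $E$-cliques of size $l\geq k+1$: a transversal $(k+1)$-subset meeting $k+1$ distinct blocks projects onto $k+1$ distinct vertices of $n$, so it cannot sit inside the union of the $k$ blocks attached to any single $E$-hyperedge, hence it is a non-hyperedge; by requirement (i) of realization its $\fhi$-type is inconsistent, which would force the $\psi$-type over that $E$-clique to be \emph{inconsistent} rather than consistent (this is already visible for $k=2$ with $(n,E)$ a triangle). Your relation --- $T\in F$ iff $\pi(T)$ is an $E$-clique --- is exactly the repair: it agrees with the paper's hyperedges where they are declared, but it additionally includes those transversal subsets, and that is precisely what makes your forward clique correspondence ($S$ an $E$-clique $\Rightarrow$ $S\times(k+1)$ an $F$-clique) work for cliques of every size, not just size $\leq k$. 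So your write-up is not merely a correct rendering of the paper's proof; it closes a genuine (though fixable) gap in it, the fix amounting to replacing ``for each hyperedge'' by ``for each clique'' in the paper's construction.
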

\begin{proof}
    Assume $\fhi(x;y)$ has $\PM^{(k+1)}$. Consider the formula 
    \[\psi(x;y_0...y_{k}) = \bigwedge_{i<k+1}\fhi(x;y_i)\]
    We want to show that $\psi$ is $\PM^{(k)}$.
    Let $n<\omega$ and let $(n,E)$ be a finite $k$-hypergraph. Define a $(k+1)$-hypergraph  $(\Tilde{V},\Tilde{E})$ as follows: 
    \begin{itemize}
        \item[(i)] for each $i < n$, take $k+1$ new vertices $K_i = v_i^0,...,v_i^{k}$ with $\Tilde{E}(v_i^0,...,v_i^k)$.
        \item[(ii)] for each hyperedge $i_0,...,i_{k-1}$ in $(n,E)$, make $K_{i_0}...K_{i_{k-1}}$ into a $\Tilde{E}$-clique. 
        \item[(iii)] No other hyperedge is present.  
    \end{itemize}
    Now, $(\Tilde{V},\Tilde{E})$ is a $(k+1)$-hypergraph, thus by $\PM^{(k+1)}$, there are $(b_v)_{v \in \Tilde{V}}$ realizing it.
    Let ${i_0,...,i_{k-1}}$ a non-hyperedge in $(n,E)$. This implies that $(K_{i_j})_{j<k}$ doesn't form a $\Tilde{E}$-clique and hence 
    $\{\fhi(x;b_v) \: v \in \bigcup_{j<k}K_{i_j}\}$ is inconsistent. Thus 
    \[\{\psi(x;\bar{b}_{K_{i_j}}) \: j<k\} \text{ is inconsistent.}\]
    If $i_0,...,i_{l-1}$ is a clique in $(n,E)$, then $(K_{i_j})_{j<l}$ forms a clique in $(\Tilde{V},\Tilde{E})$ and thus $\{\fhi(x;b_v) \: v \in \bigcup_{j<l}K_{i_j}\}$ is consistent; this shows that
    \[\{\fhi(x;\bar{b}_{K_j}) \: j<l\}\]
    is consistent. 
    In conclusion $\psi(x;y_0...y_{k})$ realizes $(n,E)$ and hence $\psi$ is $\PM^{(k)}$. 
\end{proof}
\begin{defn}
    A theory is 
    \begin{itemize}
        \item[(i)] $\PM^{(\infty)}$ if there are formulae $(\fhi_k(x_k;y_k))_{1<k<\omega}$ such that for all $1<k<\omega$ $\fhi_k(x_k;y_k)$ is $\PM^{(k)}$.
        \item[(ii)] $UPM^{(\infty)}$ (Uniformly $\PM^{(\infty)}$) if there is a formula $\fhi(x;y)$ with $\PM^{(k)}$ for all $1<k<\omega$.
    \end{itemize}
\end{defn}
\begin{rem}
    Clearly, at the level of theories, for all $2<k<\omega$ we have
\[\PM \Rightarrow \UPM^{(\infty)} \Rightarrow \PM^{(\infty)} \Rightarrow ... \Rightarrow \PM^{(k+1)} \Rightarrow \PM^{(k)} \Rightarrow ... \Rightarrow \PM^{(2)}\]
We will show that the implication $\PM^{(k+1)} \Rightarrow \PM^{(k)}$ is strict for every $1<k<\omega$. 
\end{rem}

\begin{quest}
    Is there a $\PM^{(\infty)}$ but $\NUPM^{(\infty)}$ theory? Similarly, is there a $\UPM^{(\infty)}$ but $\NPM$ theory?
\end{quest}

\begin{defn}\label{nary}
    Fix $n<\omega$.
    A relational language $\L$ is $n$-ary if, for every $R \in \L$, $\mathrm{ar}(R) \leq n$. 
     An $\L$-theory $T$ is $n$-ary if 
    \begin{itemize}
        \item[(i)] $\L$ is $n$-ary, and
        \item[(ii)] $T$ has $\mathrm{QE}$.  
    \end{itemize}
\end{defn}
\begin{rem}
    We could have defined a theory to be $n$-ary if it is interdefinable with a theory satisfying the conditions in \ref{nary}; on the other hand, all the properties that we are considering here are preserved under interdefinability and thus we don't actually lose any generality. 
\end{rem}
In \cite{n-dependence}, Chernikov, Palacin and Takeuchi studied higher arity versions of $\IP$, firstly introduced by Shelah, called $\IP_k$ ($0<k<\omega$). In particular, they characterized $\NIP_k$ in terms of the collapse of ordered $(k+1)$-hypergraphs indiscernibles to order-indiscernibles. Using this characterization, we get the following result.
\begin{thm}\label{PMk+1impliesIPk}
    For any $0<k<\omega$, $\PM^{(k+1)}$ implies $\IP_k$. 
\end{thm}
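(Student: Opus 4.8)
The plan is to produce a (generic ordered $(k+1)$-hypergraph)-indexed indiscernible that fails to collapse to an order-indiscernible, and then invoke the characterization of $\NIP_k$ from \cite{n-dependence}. First I would assume $T$ is $\PM^{(k+1)}$, witnessed by a formula $\fhi(x;y)$, and apply the preceding proposition (with $k+1$ in place of $k$) to get parameters $(b_v)_{v \in H}$, where $H$ is the generic \emph{ordered} $(k+1)$-hypergraph, such that $(b_v)_{v\in H}$ is $H$-indiscernible (this uses that $H$ has the modeling property, Fact \ref{gener-ind-hypergraph}) and realizes the underlying $(k+1)$-hypergraph in the sense of Proposition \ref{PMk-real-hypergraph}. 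Writing $E$ for the hyperedge relation of $H$, this means that for every $\fhi$-clique the associated partial $\fhi$-type is consistent, while for every $(k+1)$-set $\{v_0,\dots,v_k\}$ with $\neg E(v_0,\dots,v_k)$ the type $\{\fhi(x;b_{v_0}),\dots,\fhi(x;b_{v_k})\}$ is inconsistent.

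The key step I would carry out is to exhibit a single $\L$-formula separating two tuples of equal order type. Set
\[\theta(y_0,\dots,y_k) \; := \; \exists x \, \bigwedge_{i\leq k} \fhi(x;y_i).\]
For a strictly increasing $(k+1)$-tuple $v_0<\dots<v_k$ in $H$, the set $\{v_0,\dots,v_k\}$ (having size exactly $k+1$) is a $\fhi$-clique precisely when $E(v_0,\dots,v_k)$ holds; hence by the realization property $\models \theta(b_{v_0},\dots,b_{v_k})$ iff $E(v_0,\dots,v_k)$. By genericity of $H$ there are strictly increasing $(k+1)$-tuples $\bar v$ and $\bar w$ with $E(\bar v)$ and $\neg E(\bar w)$, and these have the same quantifier-free type in the language of pure order. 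Since $\theta$ distinguishes $b_{\bar v}$ from $b_{\bar w}$, I would conclude $\tp_\L(b_{\bar v}) \neq \tp_\L(b_{\bar w})$ even though $\bar v,\bar w$ share the same order type; thus $(b_v)_{v\in H}$ is an $H$-indiscernible that is \emph{not} order-indiscernible.

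Finally I would invoke the characterization of $\NIP_k$ in \cite{n-dependence} — that $T$ is $\NIP_k$ if and only if every (generic ordered $(k+1)$-hypergraph)-indexed indiscernible collapses to an order-indiscernible — so that the non-collapsing indiscernible produced above forces $T$ to have $\IP_k$. The case $k=1$ is included automatically: there $H$ is the random ordered graph, $\theta(y_0,y_1)=\exists x(\fhi(x;y_0)\wedge\fhi(x;y_1))$ separates an increasing edge from an increasing non-edge, and one recovers ordinary $\IP$.

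The hard part will not be the combinatorics above, which is essentially bookkeeping once the realizing indiscernible is in hand, but matching the shape of our non-collapse witness to the precise form required by the collapse characterization in \cite{n-dependence}: one must check that the relevant index structure there is exactly the generic ordered $(k+1)$-hypergraph used here and, if their statement is phrased for $n$-ary theories in the sense of Definition \ref{nary}, that the reduction to that setting is harmless (since $\IP_k$ and order-(in)discernibility are insensitive to Morleyization). The one genuinely delicate point is that $\theta$'s truth value tracks consistency/inconsistency of the finite $\fhi$-types, so that a first-order formula — and not merely a type-level condition — witnesses non-collapse; this follows from compactness because the sets of formulas involved are finite.
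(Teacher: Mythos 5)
Your proposal is correct and follows essentially the same route as the paper's proof: both obtain a (generic ordered $(k+1)$-hypergraph)-indexed indiscernible witnessing $\PM^{(k+1)}$ via the modeling property, use the formula $\exists x \bigwedge_{i\leq k}\fhi(x;y_i)$ to distinguish the types of an increasing hyperedge tuple from an increasing non-hyperedge tuple of the same order type, and conclude $\IP_k$ from the collapse characterization (Theorem 5.4) in \cite{n-dependence}. The only cosmetic difference is that the paper takes two overlapping tuples $h_0<\dots<h_{k+1}$ with $E(h_0\dots h_k)\land\neg E(h_1\dots h_{k+1})$ rather than two arbitrary increasing tuples, which changes nothing of substance.
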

\begin{proof}
    Fix $0<k<\omega$. 
    By Theorem 5.4 in \cite{n-dependence}, it's enough to exhibit a (generic ordered $(k+1)$-hypergraph)-indiscernible that is not order indiscernible. 
    Suppose $\fhi(x;y)$ is $\PM^{(k+1)}$. By Ramsey and compactness, we can find $(b_h)_{h \in H}$ $(H,<,E)$-indiscernible witnessing it, where $(H,<,E)$ is the generic ordered $(k+1)$-hypergraph. Let $h_0<h_1<...<h_{k+1} \in H$ such that 
    \[\models E(h_0...h_k) \land \neg E(h_1...h_{k+1})\]
    By $\PM^{(k+1)}$ we have 
    \[\models \exists x \bigwedge_{i<k+1}\fhi(x;b_{h_i}) \land \neg \exists x \bigwedge_{0<i<k+2} \fhi(x;b_{h_i})\]
    and thus, even if the order type of $h_0<...<h_k$ is the same as the one of $h_1<...<h_{k+1}$, we have 
    \[\tp((b_{h_i})_{i<k+1}) \neq \tp((b_{h_i})_{0<i<k+2}\]
    In conclusion $(b_h)_{h \in H}$ is not order indiscernible and thus some formula has $\IP_k$. 
\end{proof}
\begin{proof}[Alternative proof]
Suppose $\fhi(x;y)$ is $\PM^{(k+1)}$. Then, by Proposition \ref{PMk-real-hypergraph} and compactness, the formula \[\psi(y_0,...,y_{k}) =\exists x \bigwedge_{i<k+1}\fhi(x;y_i)\] \emph{encodes} every $(k+1)$-hypergraph; in particular, it has $\IP_k$ (see \cite{n-dependence}, Proposition 5.2).  
    
\end{proof}
\begin{cor}\label{k-ary_are_NPMk+1}
    Any $k$-ary theory is $\NPM^{(k+1)}$.
\end{cor}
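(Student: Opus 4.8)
The plan is to obtain the corollary by contraposition from Theorem \ref{PMk+1impliesIPk}. Since that theorem gives, at the level of theories, the implication $\PM^{(k+1)} \Rightarrow \IP_k$, it suffices to prove that every $k$-ary theory is $\NIP_k$; the contrapositive of the theorem then yields $\NPM^{(k+1)}$. Thus the entire content of the corollary reduces to the (essentially known) fact that a relational theory of arity $\leq k$ with quantifier elimination is $k$-dependent, which is exactly the bounded-arity source of $\NIP_k$ isolated in \cite{n-dependence}.

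To establish that a $k$-ary theory $T$ is $\NIP_k$, I would argue by contradiction: suppose some formula $\fhi(x;y_0,\dots,y_{k-1})$ has $\IP_k$ in $T$, with $y_0,\dots,y_{k-1}$ the $k$ parameter blocks witnessing the $k$-independence property. Using $\mathrm{QE}$ (built into the definition of $k$-ary), I would first replace $\fhi$ by an equivalent quantifier-free formula, i.e.\ a Boolean combination of atomic $\L$-formulas. The key observation is a counting argument on variable blocks: the configuration defining $\IP_k$ distinguishes $k+1$ blocks of variables, namely the object block $x$ together with $y_0,\dots,y_{k-1}$, but each relation symbol of $\L$ has arity $\leq k$, so every atomic subformula of $\fhi$ occupies at most $k$ variable positions and therefore mentions variables from at most $k$ of these $k+1$ blocks. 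Consequently every atomic subformula omits at least one block. (That $x$ and each $y_j$ may be tuples is harmless, since the arity bounds the number of variable positions and not the lengths of the blocks.)

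The main step, and the one I expect to be the crux, is the combinatorial lemma that a Boolean combination of formulas each omitting at least one of the $k+1$ blocks cannot have $\IP_k$ -- equivalently, cannot encode the generic $(k+1)$-hypergraph. This is precisely the statement that arity-$\leq k$ structures are $k$-dependent. I would either cite \cite{n-dependence} for it directly, or reprove it by fixing the parameter arrays $(a^0_{i_0},\dots,a^{k-1}_{i_{k-1}})$ and analyzing the dependence of $\fhi(b_w;a^0_{i_0},\dots,a^{k-1}_{i_{k-1}})$ on the index tuple $(i_0,\dots,i_{k-1})$: the block-omitting atomic pieces that involve $x$ each depend on at most $k-1$ of the $k$ indices once the object is fixed, while the pieces omitting $x$ contribute a fixed background value independent of $w$, so no single object $b_w$ can realize an arbitrary $k$-ary membership pattern. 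Granting this lemma, no formula of a $k$-ary theory has $\IP_k$, hence $T$ is $\NIP_k$, and by the contrapositive of Theorem \ref{PMk+1impliesIPk} the theory $T$ is $\NPM^{(k+1)}$, as desired.
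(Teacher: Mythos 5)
Your proposal is correct and takes essentially the same route as the paper: the paper's proof consists of citing \cite{n-dependence} (Example 2.2) for the fact that $k$-ary theories are $\NIP_k$ and then applying Theorem \ref{PMk+1impliesIPk} contrapositively, exactly as you outline. Your additional sketch of how one would reprove the bounded-arity-implies-$\NIP_k$ fact (quantifier elimination, atomic formulas omitting a block, Boolean combinations failing to encode $(k+1)$-hypergraphs) is detail the paper delegates entirely to the citation, so citing \cite{n-dependence} directly, as you also suggest, matches the paper's proof.
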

\begin{proof}
    From \cite{n-dependence}, Example 2.2, we have that $k$-ary theories are $\NIP_k$. By \ref{PMk+1impliesIPk}, $k$-ary theories are $\NPM^{(k+1)}$. 
\end{proof}
We now construct examples of theories  with $\PM^{(k)}$ but not $\PM^{(k+1)}$ for all $k\geq 2$. This will show that the implication $\PM^{(k+1)} \Rightarrow PM^{(k)}$ is strict. We give a uniform recipe to build $\PM^{(k)}$ $\NPM^{(k+1)}$ theories.

\begin{eg}[$\PM^{(k)}$ $\NPM^{(k+1)}$ theory]\label{PM_k,NPM_k+1-examples}
Fix $1<k<\omega$ and consider the language $\L_k = \{O,P,R,E_k\}$ where $P,O$ are unary predicates, $R$ is a binary predicate and $E_k$ is a $k$-ary predicate. Consider the $\L_k$-theory $T_0$ axiomatized as follows:
\begin{itemize}
    \item[$(A_1)$] $P,O$ partition the universe. 
    \item[$(A_2)$]$E_k \subseteq P^k$ and $(P,E_k)$ is a $k$-hypergraph.
    \item[$(A_3)$] $R \subseteq O \times P$
    \item[$(A_4)$] $\forall v_0...v_{k-1}\forall x(\bigwedge_{i<k}R(x,v_i) \to \neg E_k(\Bar{v}))$
\end{itemize}
Let $\K = \{A \models T_0\: |A| < \aleph_0 \}$. Similarly to the proof of Proposition \ref{K-has-AP}, it can be shown that $\K$ is a Fra\"issé class closed under free-amalgamation. Let $\M_k$ be the Fra\"issé limit. Intuitively $\M_k$ has two disjoint sets ($O$,$P$), $(P(\M_k),E_k)$ is the generic $k$-hypergraph and $O(\M_k)$ is the set of witnesses for the consistency along $R$ of independent sets and the inconsistency along $R$ of $k$-hyperedges. In particular $T_k:= \mathrm{Th}(\M_k)$ is $\PM^{(k)}$ (witnessed by $R$), $\omega$-categorical, $\NSOP_4$ (See \cite{free-amalgamation_Conant}, Theorem 1.1) and eliminates quantifiers. Moreover, by Corollary \ref{k-ary_are_NPMk+1}, $T_k$ is $\NPM^{(k+1)}$.
\end{eg}

We now consider a well-known theory and see how it relates with this hierarchy: 
\begin{eg}[Generic triangle-free graph]
Let $\L = \{e\}$ where $e$ is a binary relation and let  $\K$ be the class of finite triangle-free graphs. It can be shown that $\K$ is a Fra\"issé class; let $\H$ be its limit and let $T_\H = \mathrm{Th}(\H)$. 
\begin{prop}
    $T_\H$ is $\PM^{(2)}$. 
\end{prop}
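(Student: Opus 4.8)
The plan is to exhibit a single formula witnessing $\PM^{(2)}$ and to reduce everything to the hypergraph reformulation of Proposition \ref{PMk-real-hypergraph}. For $k=2$ a finite $k$-hypergraph is just a finite graph, so it suffices to find a formula $\fhi$ which \emph{realizes} every finite graph $(n,E)$: parameters $(b_i)_{i<n}$ with $\{\fhi(x;b_i),\fhi(x;b_j)\}$ inconsistent whenever $\{i,j\}\notin E$, and $\{\fhi(x;b_i)\:i\in C\}$ consistent for every clique $C$ of $(n,E)$. Throughout I write $a\sim b$ for $\models e(a,b)$ and I will use the extension property of the generic triangle-free graph: for any finite independent $A$ and finite disjoint $B$ there is a vertex adjacent to all of $A$ and to none of $B$.

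The naive choice $\fhi(x;y)=e(x,y)$ does \emph{not} work: in $\H$ two neighbourhoods $e(\H,b)$ and $e(\H,b')$ meet exactly when $b\not\sim b'$ (a common neighbour of two adjacent vertices would complete a triangle, while two non-adjacent vertices always have a common neighbour by genericity), so this formula can only realize graphs whose \emph{complement} is triangle-free. To escape the triangle-free constraint I would instead take
\[\fhi(x;y_0y_1)=e(x,y_0)\wedge e(x,y_1),\]
so that $\fhi(\H;b_i)$ is the set of common neighbours of the two coordinates of $b_i$, and encode each vertex of $(n,E)$ by a \emph{pair} of vertices of $\H$. Given $(n,E)$, build a finite graph $D$ on the $2n$ vertices $\{u_i,v_i\:i<n\}$ as the bipartite double cover of the complement of $(n,E)$: the two sides are $\{u_i\}$ and $\{v_i\}$, and for each non-edge $\{i,j\}$ of $(n,E)$ put the two $D$-edges $u_iv_j$ and $u_jv_i$ (and no others; in particular no $u_iv_i$). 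Since $D$ is bipartite it is triangle-free, hence $D\in\K$, and by universality of $\H$ I may take $D$ to be an induced subgraph of $\H$; set $b_i=(u_i,v_i)$.

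It then remains to verify the three points, all from triangle-freeness and the extension property. Each pair $\{u_i,v_i\}$ is non-adjacent (opposite sides, no within-pair edge), so by extension $\fhi(\H;b_i)\neq\emptyset$. If $\{i,j\}\notin E$ then $u_i\sim v_j$ in $\H$, so any common point of $\fhi(\H;b_i)$ and $\fhi(\H;b_j)$ would be adjacent to both $u_i$ and $v_j$ and hence complete a triangle; thus the pair is inconsistent. Finally, if $C$ is a clique of $(n,E)$ then the set $\{u_i,v_i\:i\in C\}$ carries no $D$-edge (none within a side since $D$ is bipartite, none across sides since every pair from $C$ is an edge of $(n,E)$ and so contributes no double-cover edge), hence it is independent in $\H$; the extension property yields a common neighbour $a$, and $a\in\bigcap_{i\in C}\fhi(\H;b_i)$ witnesses consistency. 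By Proposition \ref{PMk-real-hypergraph}, $\fhi$ is $\PM^{(2)}$ and therefore $T_\H$ is $\PM^{(2)}$.

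The one genuinely delicate point is the passage from $e$ to the two-variable formula together with the double-cover encoding. The whole difficulty is that disjointness of $\fhi$-sets must be \emph{forced} while the parameter configuration stays triangle-free (so that it embeds into $\H$) and every clique's parameter set stays independent (so that common neighbours exist). The bipartite double cover is exactly what reconciles these demands: it realizes each non-edge as a single cross-edge—already enough to force inconsistency via triangle-freeness—without ever placing an edge inside the parameter set attached to a clique.
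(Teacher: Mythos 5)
Your proposal is correct and is essentially the paper's own proof: the same formula $\fhi(x;y_0y_1)=e(x,y_0)\wedge e(x,y_1)$, the same encoding of each vertex by a non-adjacent pair with cross-edges exactly at the non-edges of $(n,E)$, and the same two verifications (triangles force inconsistency at non-edges, independence plus the extension property gives consistency on cliques), reduced via Proposition \ref{PMk-real-hypergraph}. The only cosmetic difference is that you get triangle-freeness of the parameter graph for free from bipartiteness of the double cover, where the paper runs a short case analysis on neighborhoods.
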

\begin{proof}
    We claim that the formula $\fhi(x;yz) = e(x,y) \land e(x,z)$ is $\PM^{(2)}$. 
    Let $n<\omega$ and let $(V,E)$ any finite graph an $n$ vertices. Construct the graph $(\Tilde{V},\Tilde{E})$ as follows: Replace each vertex $v \in V$ with two vertices $b_vc_v$ with $\neg b_v \Tilde{E}c_v$. Whenever $\{v,w\} \notin E$, make a new edge between $\{b_v,c_w\} \in \Tilde{E}$ and no other edges. 
    \begin{claim}
        $(\Tilde{V},\Tilde{E})$ is triangle-free. 
    \end{claim}
    \begin{claimproof}
        As a notation, given a graph and a vertex $v$, we write $N(v)$ for the graph neighborhood of $v$ i.e.\ the set of all the vertices in the graph that are connected by an edge to $v$. 
        Suppose, toward contradiction, that there is a triangle $a \Tilde{E} a' \Tilde{E}a'' \Tilde{a} $. Since there's no edge between $b_v$ and $c_v$ for every $v$, there must be $v_0,v_1,v_2$, such that the triangle is among $(b_{v_i}c_{v_i})_{i<3}$. Suppose that $b_{v_0}$ is in the triangle. But $N(b_{v_0}) = \{c_{v_1},c_{v_2}\}$ and $\neg c_{v_1} \Tilde{E} c_{v_2}$, contradiction. Then it must be the case that $c_{v_0}$ is in the triangle. But $N(c_{v_0}) = \{b_{v_1},b_{v_2}\}$ and $\neg b_{v_1}\Tilde{E}b_{v_2}$. Contradiction. Thus $(\Tilde{V},\Tilde{E})$ is triangle-free. 
    \end{claimproof}
    
    Since $(\Tilde{V},\Tilde{E})$ is a finite triangle-free graph, it can be embedded in $\H$. Now, let $\{v,w\} \notin E$. Then, if there is a vertex $a$ such that 
    \[a \models \fhi(x;b_vc_v) \land \fhi(x;b_wc_w) = (e(x,b_v) \land e(x,c_v)) \land ((e(x,b_w) \land e(x,c_w))\]
    Then $a,b_v,c_w$ form a triangle; contradiction. On the other hand, if $v_0,...,v_{n-1}$ is a clique, then $(b_{v_i}c_{v_i})_{i<n}$ is an independent set of vertices and hence, by the extension property of $\H$, there is a vertex $a$ such that $a \models \bigwedge_{i<n} e(x,b_{v_i}) \land e(x,c_{v_i})$ and thus 
    \[\{\fhi(x;b_{v_i}c_{v_i}) \: i<n\} \text{ is consistent.}\]
    In conclusion $\fhi(x;y)$ is $\PM^{(2)}$. Thus, by definition, $T_\H$ is $\PM^{(2)}$. 
\end{proof}
Moreover, since $T_\H$ is a binary theory, by Corollary \ref{k-ary_are_NPMk+1}, $T_\H$ is $\NPM^{(3)}$. 
\end{eg}

\addtocontents{toc}{\protect\setcounter{tocdepth}{0}}
\section*{Acknowledgements}
This work is part of the author's PhD dissertation in progress.
The author would like to sincerely thank Alex Kruckman for his guidance, valuable help and fruitful discussions. Furthermore, the exciting environment of the Wesleyan University logic group played an important role in this project, in particular the encouragement and the helpful conversations the author had with Cameron Hill, Rehana Patel and Alex Van Abel. Finally, the author would like to thank the anonymous referee for their useful comments and corrections. 
\addtocontents{toc}{\protect\setcounter{tocdepth}{2}}
\bibliographystyle{plainnat}
\bibliography{refnew.bib}

\end{document}